\theoremstyle{plain}
\newtheorem{theorem}{Theorem}[section]
\newtheorem{proposition}[theorem]{Proposition}
\newtheorem{lemma}[theorem]{Lemma}
\newtheorem{corollary}[theorem]{Corollary}
\newtheorem*{theoremA}{Theorem A}
\newtheorem*{corollaryB}{Corollary B}
\theoremstyle{definition}
\newtheorem{definition}[theorem]{Definition}
\newtheorem{remark}[theorem]{Remark}
\newcommand{\F}{\mathbb{F}}
\newcommand{\N}{\mathbb{N}}
\newcommand{\Q}{\mathbb{Q}}
\newcommand{\Z}{\mathbb{Z}}
\newcommand{\st}{\text{Stab}}
\DeclareMathOperator{\Sym}{Sym}
\DeclareMathOperator{\Aut}{Aut}
\DeclareMathOperator{\St}{St}
\newcommand{\TT}{\mathcal{T}}
\newcommand{\LL}{\mathcal{L}}
\newcommand{\MM}{\mathcal{M}}
\numberwithin{equation}{section}
\title[Lower central series of a family of non-periodic GGS-groups]%
{On the lower central series of a large family of non-periodic GGS-groups}
\author[G.\,A. Fern\'{a}ndez-Alcober]{Gustavo\,A. Fern\'{a}ndez-Alcober}
\address{Gustavo\,A. Fern\'{a}ndez-Alcober: Department of Mathematics, University of the Basque Country UPV/EHU, 48080 Bilbao, Spain}
\email{gustavo.fernandez@ehu.eus}
\author[M.\,E. Garciarena]{Mikel E. Garciarena}
\address{Mikel E. Garciarena: Dipartimento di Matematica, Universit\`a di Salerno, 84084 Fisciano, Italy \--- Department of Mathematics, University of the Basque Country UPV/EHU, 48080 Bilbao, Spain}
\email{mgarciarenaperez@unisa.it}
\author[M. Noce]{Marialaura Noce}
\address{Marialaura Noce: Dipartimento di Informatica, Universit\`a di Salerno, 84084 Fisciano, Italy}
\email{mnoce@unisa.it}
\thanks{\\[-5pt] \indent The first two authors are supported by the Spanish Government, grant PID2020-117281GB-I00, partly with FEDER funds.
The first author is also supported by the Basque Government, grant IT483-22, and the second author by the
``National Group for Algebraic and Geometric Structures, and their Applications" (GNSAGA -- INdAM).
The third author is partially supported by project SERICS (PE00000014) under the MUR National Recovery and Resilience Plan funded by the European Union -- NextGenerationEU, and partially by the European Union -- Next Generation EU, Missione 4 Componente 1 CUP B53D23009410006, PRIN2022-2022PSTWLB -- Group Theory and Applications.}
\keywords{Groups acting on trees, GGS-groups, lower central series, lower central width}
\subjclass[2020]{20E08, 20F14}
\begin{document}

\begin{abstract}
For an odd prime $p$, we determine the lower central series of a large family of non-periodic GGS-groups,
which has a density of roughly $(\frac{p-1}{p})^2$ within all GGS-groups.
This means a significant extension of the knowledge regarding the lower central series of distinguished classes
of branch groups, which to date was basically restricted to the Grigorchuk group.
As part of our results, we obtain the indices between consecutive terms of the lower central series,
and we show that these groups, as well as their profinite completions, have lower central width equal to $2$.
In particular, this confirms a conjecture of Bartholdi, Eick, and Hartung about the generalised Fabrykowski-Gupta groups.
\end{abstract}

\maketitle

\section{Introduction}

For a given integer $d\ge 2$, the \emph{$d$-regular rooted tree} or simply the \emph{$d$-adic tree} is a tree
with a distinguished vertex (the root) of degree $d$ and with every other vertex of degree $d+1$.
Groups of automorphisms of regular rooted trees are an important source of examples of groups with
very interesting properties.
Prominent among these are the (first) \emph{Grigorchuk group} and the \emph{Grigorchuk-Gupta-Sidki groups}
(GGS-groups, for short).
The Grigorchuk group was constructed by R.I.\ Grigorchuk \cite{Grigorchuk80} in 1980 and has been extensively studied.
It was the first example of a group of intermediate word growth, thus providing an answer to Milnor's Problem, 
as well as the first example of a group that is amenable but not elementary amenable, answering Day's Problem; cf.\ 
\cite{Grigorchuk85} for both these properties.
Also it is a $3$-generator infinite $2$-group, and so it gives a negative answer to the
General Burnside Problem.
For an arbitrary prime $p$, examples of $2$-generator infinite $p$-groups can be found within the class of GGS-groups,
such as the second Grigorchuk group \cite{Grigorchuk80} and the Gupta-Sidki groups \cite{Gupta-Sidki}.
The GGS-groups can be defined by specifying a non-zero \emph{defining vector} $\mathbf{e}\in (\Z/d\Z)^{d-1}$;
see \cref{sec:uniseriality} for details.
They have received much attention in the past few decades, particularly in the case of the $p$-adic tree
for a prime $p$: see \cite{Gustavo} for their basic theory and Hausdorff dimension, \cite{Gustavo-Alejandra-Jone}
for the position of their normal subgroups, \cite{Francoeur-Thillaisundaram,Pervova} for their maximal subgroups,
or \cite{Petschick} for the isomorphism problem.
It is worth mentioning that GGS-groups have also found applications outside the realm of group theory; as shown in
\cite{Gul-Uria}, they can be used for the determination of Beauville surfaces.

\vspace{8pt}

The full group of automorphisms $\Aut(\TT)$ of a regular rooted tree $\TT$ is a profinite group.
From now onwards we will always assume that $\TT$ is $p$-adic for a prime $p$.
In particular, when we refer to GGS-groups, we do it in this setting.
Then the set of all automorphisms whose local action at every vertex of $\TT$ is a power of the cycle $(1\ 2\ \ldots \ p)$
is a Sylow pro-$p$ subgroup of $\Aut(\TT)$, which we denote by $\Gamma(p)$.
Note that both the Grigorchuk group (for $p=2$) and the GGS-groups are subgroups of $\Gamma(p)$.
A subgroup $G$ of $\Gamma(p)$ is residually a finite $p$-group, and its topological closure in $\Aut(\TT)$ is
a pro-$p$ group, which is isomorphic to the profinite completion of $G$ provided that $G$ satisfies the
\emph{congruence subgroup property} (see \cref{sec:uniseriality} for the definition).
It has been proved that the Grigorchuk group and all GGS-groups satisfy the congruence subgroup property, with the
only exception of the GGS-group with constant defining vector; also, these groups are just infinite i.e.\ every
non-trivial normal subgroup is of finite index (see \cite[Section 12]{Grigorchuk-New Horizons} and
\cite[Theorem 2.7]{Gustavo-Alejandra-Jone} for all these properties).

\vspace{8pt}

Recall that a pro-$p$ group $G$ is said to have \emph{finite coclass} if for some positive integer $i_0$ we have
$|\gamma_i(G):\gamma_{i+1}(G)|=p$ for all $i\ge i_0$.
Then
\[
\log_p |G:\gamma_i(G)| - i
\]
takes a constant value for $i\ge i_0$, which is called the \emph{coclass} of $G$.
Leedham-Green \cite{LeedhamGreen} and Shalev \cite{Shalev} independently proved that, for every prime $p$, there are
only finitely many infinite pro-$p$ groups of each fixed coclass, and that these groups are soluble.
After that, attention was turned towards pro-$p$ groups of finite lower central width, a natural generalisation of
finite coclass.
A residually nilpotent group $G$ is said to have \emph{finite lower central width} if there exists a constant $C$ such
that $|\gamma_i(G):\gamma_{i+1}(G)|\le C$ for all $i\ge 1$.
If $G$ has finite lower central width and all indices $|\gamma_i(G):\gamma_{i+1}(G)|$ are powers of a prime $p$
(in particular, if $G$ is a pro-$p$ group), then one usually defines the \emph{lower central width} of $G$ as
\[
\max_{i\ge 1} \, \log_p |\gamma_i(G):\gamma_{i+1}(G)|.
\]
A wealth of information about linear pro-$p$ groups of finite width can be found in the book \cite{Klaas-LeedhamGreen-Plesken}
by Klaas, Leedham-Green and Plesken, including insoluble examples.
In 1996, Zelmanov \cite{Zelmanov} conjectured that a just infinite pro-$p$ group of lower central width is
either soluble, $p$-adic analytic (so linear over $\Q_p$), or commensurable to a positive part of a loop group or to the Nottingham group.
This conjecture was settled in the negative by Rozhkov \cite{Rozhkov}, by proving that the Grigorchuk group (and
as a consequence also its profinite completion) has width $3$.
Indeed, the Grigorchuk group is a branch group, and so is its profinite completion.
Now as shown by \'Abert, weakly branch groups do not satisfy any laws and are not linear over any field
(see in \cite{Abert05} and \cite{Abert06}, respectively).
Rozhkov's proof contained some gaps that were partially fixed in \cite{Rozhkov-2}, and eventually the result was fully
confirmed by Bartholdi and Grigorchuk \cite{Bartholdi-Grigorchuk} in 2000.

\vspace{8pt}

The results by Rozhkov, Bartholdi, and Grigorchuk do not only determine the lower central width of the Grigorchuk group $G$,
but also provide a detailed description of the terms of its lower central series.
In particular, they show that $|G:G'|=2^3$, that $|G':\gamma_3(G)|=2^2$, and that
\[
|\gamma_i(G):\gamma_{i+1}(G)|
=
\begin{cases}
2^2, & \text{if $2^m+1\le n \le 3\cdot 2^{m-1}$,}
\\
2, & \text{if $3\cdot 2^{m-1}+1\le n\le 2^{m+1}$,}
\end{cases}
\]
where $m$ runs over the positive integers.

\vspace{8pt}

Information about the lower central series of other distinguished (weakly) branch subgroups of $\Aut(\TT)$
is very scarce.
Regarding GGS-groups, all results are limited to the original Gupta-Sidki examples and to the so-called
\emph{generalised Fabrykowski-Gupta groups}.
As already mentioned, the Gupta-Sidki groups are examples of $2$-generator infinite periodic groups
(actually, $p$-groups); they correspond to the defining vector $(1,-1,0,\overset{p-3}{\ldots},0)$ in $\F_p^{p-1}$.
In turn, the Fabrykowski-Gupta group was originally defined only for $p=3$ and then generalised to an arbitrary
odd prime, by considering the defining vector $(1,0,\overset{p-2}{\ldots},0)$.
Contrary to the Gupta-Sidki examples, the generalised Fabrykowski-Gupta groups are not periodic; on the other hand,
they are groups of intermediate growth, see \cite{Fabrykowski-Gupta85,Fabrykowski-Gupta91}.
In \cite[Corollary 3.9]{Bartholdi}, Bartholdi showed that the Gupta-Sidki $3$-group has infinite lower central width,
completing work of Vieira \cite{Vieira}, who had previously determined the indices between the first few terms
of the lower central series.
In the same paper (Corollary 3.14), Bartholdi also showed that the Fabrykowski-Gupta group (over the $3$-adic tree)
has lower central width $2$.
In \cite{Bartholdi-Eick-Hartung}, Bartholdi, Eick, and Hartung developed a nilpotent quotient algorithm that they applied
to obtain by computer the indices between the first terms of the lower central series of both the Gupta-Sidki groups and
the generalised Fabrykowski-Gupta groups for small primes.
They performed similar calculations for other weakly branch groups not in the class of GGS-groups, namely for the
Brunner-Sidki-Vieira group, the Grigorchuk supergroup, and the Basilica group.
Based on this information, they made several conjectures about the behaviour of the lower central series of these groups.
In particular, they conjectured (Conjecture 17) that the lower central width of the generalised Fabrykowski-Gupta group
over the $p$-adic tree is $2$ for every prime $p$.

\vspace{8pt}

The goal of this paper is to describe the lower central series of a wide class of non-periodic GGS-groups, containing
the generalised Fabrykowski-Gupta groups.
In particular, we determine the values of all indices $|\gamma_i(G):\gamma_{i+1}(G)|$ for these groups, and as a
consequence we show that they all have lower central width $2$.
Thus we confirm Conjecture 17 from \cite{Bartholdi-Eick-Hartung} and extend its validity to many other GGS-groups.
The class of GGS-groups to which our result applies is the following.

\begin{definition}
\label{definition FG-type}
Let $G$ be a GGS-group with defining vector $\mathbf{e}=(e_1,\ldots,e_{p-1})\in \F_p^{p-1}$, where $p$ is an odd prime.
Then we define the following two elements of $\F_p$:
\[
\varepsilon(\mathbf{e})=\sum_{i=1}^{p-1} \, e_i
\qquad
\text{and}
\qquad
\delta(\mathbf{e})=\sum_{i=1}^{p-1} \, ie_i,
\]
and we say that $G$ is of \emph{FG-type} if $\varepsilon(\mathbf{e})\ne 0$ and $\delta(\mathbf{e})\ne 0$.
\end{definition}

Since we will always be working with a fixed GGS-group given by a fixed defining vector, in the remainder we
will simply write $\varepsilon$ and $\delta$ instead of $\varepsilon(\mathbf{e})$ and $\delta(\mathbf{e})$.
Obviously, the generalised Fabrykowski-Gupta groups are of FG-type, having $\mathbf{e}=(1,0,\overset{p-2}{\ldots},0)$, while
the GGS-group with constant defining vector is not of FG-type, since $\delta=0$ in this case.
It is well-known \cite[Theorem 1]{Vovkivsky} that a GGS-group is periodic if and only if $\varepsilon=0$;
thus all groups of FG-type are non-periodic.
One can easily see that, out of the possible $p^{\,p-1}-1$ possible defining vectors of GGS-groups, the number of
vectors for which $\varepsilon\ne 0$ and $\delta\ne 0$ is $p^{\,p-1}-2p^{\,p-2}+p^{\,p-3}$.
As a consequence, the proportion of groups of FG-type among all GGS-groups is roughly $(\frac{p-1}{p})^2$, and a majority
of GGS-groups are of FG-type.
Similarly, the proportion of groups of FG-type among non-periodic GGS-groups is $\frac{p-1}{p}$.

\vspace{8pt}

Before stating the main theorem of the paper, which describes the lower central series of GGS-groups of FG-type,
we need to introduce the following notation.
We define the sequence of integers $\{\ell(m)\}_{m\ge 0}$ and $\{r(m)\}_{m\ge 0}$ recursively by means of
\begin{align*}
\ell(m)
=
\begin{cases}
1,
&
\text{if $m=0$,}
\\
p,
&
\text{if $m=1$,}
\\
p (\ell(m-1)-1)-1,
&
\text{if $m\ge 2$,}
\end{cases}
\end{align*}
and
\begin{align*}
r(m)
=
\begin{cases}
2,
&
\text{if $m=0$,}
\\
p+1,
&
\text{if $m=1$,}
\\
p (r(m-1)-1)-1,
&
\text{if $m\ge 2$.}
\end{cases}
\end{align*}
Thus for every $m\ge 2$ we have
\vspace{-5pt}
\[
\ell(m) = p^m - p^{m-1} - 2 \sum_{i=1}^{m-2} p^i -1
\vspace{-8pt}
\]
\vspace{-5pt}
and
\[
r(m) = p^m - 2 \sum_{i=1}^{m-2} p^i -1 = \ell(m) + p^{m-1}.
\]
On the other hand, we define the following elements of a GGS-group $G$ of FG-type.
If $a$ and $b$ are the canonical generators of $G$ (see \cref{sec:uniseriality}), we set $x(1)=b$
and
\[
x(i) = [b,a^{\varepsilon}b,\overset{i-1}{\ldots},a^{\varepsilon}b],
\]
for every $i\ge 2$, where $\varepsilon$ is as above.
We also define $y_1(1)=a$ and
\[
y_j(i) = [x(j-1),b,a^\varepsilon b,\overset{i-j}{\dots},a^\varepsilon b], 
\]
for $2\le j\le i$.
Observe that $x(i),y_j(i)\in \gamma_i(G)$ in every case.

\begin{theoremA}
\label{main}
Let $p$ be an odd prime and let $G$ be a GGS-group of FG-type defined over the $p$-adic tree.
Then the following hold:
\begin{enumerate}
\item 
If $\ell(m)\le i < r(m)$ for some $m\ge 0$, then  
\[
|\gamma_i(G):\gamma_{i+1}(G)|=p^2.
\]
Also $\gamma_i(G)=\langle x(i), y_{\ell(m)}(i) \rangle \, \gamma_{i+1}(G)=\langle x(i), y_{\ell(m)}(i) \rangle^G$.
\vspace{5pt}
\item 
If $r(m)\le i< \ell(m+1)$ for some $m\ge 0$ then
\[
|\gamma_i(G):\gamma_{i+1}(G)|=p.
\]
Also $\gamma_i(G)=\langle x(i)\rangle \, \gamma_{i+1}(G)=\langle x(i) \rangle^G$.
\end{enumerate}
In particular, $G$ is a group of lower central width $2$.
\end{theoremA}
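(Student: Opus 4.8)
The plan is to exploit the self-similar structure of $G$ recalled in \cref{sec:uniseriality}, working through the section embedding $\psi\colon\st_G(1)\to G\times\overset{p}{\cdots}\times G$ and using that $G/\st_G(1)$ is cyclic of order $p$, so that $G'\le\st_G(1)$ and every commutator of weight at least $2$ may be pushed through $\psi$. The argument is an induction that descends through the tree: the terms $\gamma_i(G)$ for large $i$ are analysed via their first-level sections, which are controlled by terms $\gamma_{i'}(G)$ of much smaller weight $i'\approx i/p$. First I would settle the base cases by direct computation, namely $|G:G'|=p^2$ and $|\gamma_i(G):\gamma_{i+1}(G)|=p$ for $2\le i\le p-1$, together with the asserted generators; these are exactly the intervals indexed by $m=0$, and they establish the pattern that the recursion must reproduce at every scale.

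The core of the proof is a sectioning formula for the distinguished generators. Since $x(i)$ and $y_j(i)$ are iterated commutators in $b$ and $a^\varepsilon b$, and $b\in\st_G(1)$, I would compute $\psi(x(i))$ and $\psi(y_j(i))$ explicitly and isolate the coordinate controlled by the section of $b$. The point is that this section divides the commutator weight by roughly $p$, which is precisely the relation inverse to $\ell(m)=p(\ell(m-1)-1)-1$ and $r(m)=p(r(m-1)-1)-1$. Thus the entire numerology of the statement, including the closed forms $\ell(m)=p^m-p^{m-1}-2\sum_{i=1}^{m-2}p^i-1$ and $r(m)=\ell(m)+p^{m-1}$, should fall out once the sectioning formula is established and fed into the recursion.

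With the recursion in hand I would prove the two bounds in a single induction that simultaneously yields the index values and the generation statements, the normal-closure description $\langle\cdots\rangle^G$ being maintained as part of the inductive hypothesis. For the upper bounds I would use $\gamma_{i+1}(G)=[\gamma_i(G),G]$ and standard commutator identities to show that, modulo $\gamma_{i+1}(G)$, the group $\gamma_i(G)$ is generated by $x(i)$ and $y_{\ell(m)}(i)$ in the first regime and by $x(i)$ alone in the second; the crucial combinatorial fact is that $y_{\ell(m)}(i)$ becomes redundant modulo $\gamma_{i+1}(G)$ exactly when $i$ reaches $r(m)$, while a new $y$-generator is born exactly at $i=\ell(m+1)$. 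For the lower bounds I would detect the claimed quotients through $\psi$: the uniseriality results of \cref{sec:uniseriality} constrain the possible indices, and the hypotheses $\varepsilon\ne 0$ and $\delta\ne 0$ guarantee that the relevant sections of $x(i)$ and $y_{\ell(m)}(i)$ are nontrivial and $\F_p$-independent in the appropriate layer, so that no collapse occurs and the indices are exactly $p^2$ and $p$. Since every index is $p$ or $p^2$, the lower central width is $2$, and the congruence subgroup property transfers this to the profinite completion.

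I expect the principal obstacle to be the bookkeeping at the two phase transitions $i=r(m)$ and $i=\ell(m+1)$. One has to verify, with exact commutator relations and uniformly in $m$, that $y_{\ell(m)}(i)$ drops out of the generating set precisely at $r(m)$ and that the replacement generator enters precisely at $\ell(m+1)$; it is here that the hypothesis $\delta\ne 0$ and the fine structure of the layers become indispensable, since weaker control would only pin down the thresholds up to a bounded error rather than to the exact values $\ell(m)$ and $r(m)$.
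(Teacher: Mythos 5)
Your overall strategy---push everything through $\psi$, observe that sectioning divides commutator weight by roughly $p$, and recover the numerology of $\ell(m)$ and $r(m)$ from that recursion---is the same one the paper uses, but as written the proposal has two genuine gaps. First, your base case is too small. You propose to verify only the $m=0$ intervals directly (i.e.\ $|G:G'|=p^2$ and $|\gamma_i(G):\gamma_{i+1}(G)|=p$ for $2\le i\le p-1$, which is essentially the structure of $G_2$) and to let the recursion produce everything else. But the recursions $\ell(m)=p(\ell(m-1)-1)-1$ and $r(m)=p(r(m-1)-1)-1$ only begin at $m=2$; the values $\ell(1)=p$ and $r(1)=p+1$ are separate initial data, and the corresponding facts---that a second generator $y_p(p)=[x(p-1),b]$ is born at $i=p$ and dies at $i=p+1$---cannot be deduced from the $m=0$ data by any sectioning argument. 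This is exactly the content of \cref{sec:n at most 3}: one must locate the commutators $[b,a,\overset{p-2}{\ldots},a,b]$, $[b,a,\overset{p-1}{\ldots},a,b]$ and $[b,a,\overset{p-2}{\ldots},a,b,a][b,a,\overset{p-1}{\ldots},a,b]$ precisely within the uniserial chain below $\St_{G_3}(2)$ (\cref{position commutators}), and this is where $\delta\ne 0$ first does real work. Without this hand computation the induction has nothing to start from, and \cref{indices not FG-type} shows the conclusion genuinely fails when $\delta=0$, so no soft argument can replace it.

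Second, the mechanism that produces the $p^2$-jumps is missing. You speak as if the first-level sections of $\gamma_i(G)$ were directly ``controlled by'' terms $\gamma_{i'}(G)$ with $i'\approx i/p$, but $\psi(G)$ is a proper subgroup of $W(G)=B(G)\rtimes\langle\sigma\rangle$, so the correct comparison object is the lower central series of the full wreath product $W(G_{n-1})$, which must itself be determined first (this is \cref{LCS of W(G)} and \cref{LCS of W(G) detail}, a nontrivial extension of Liebeck's theorem: $\gamma_{(j-1)p+k}^*(W(G))=\Delta^{k-1}(B(\gamma_j(G)))\cdot B(\gamma_{j+1}(G))$). The dichotomy between $p$- and $p^2$-intervals then arises because $\psi(\gamma_{i-1}(G_n))$ fills up $\gamma_i(W(G_{n-1}))$ modulo $\gamma_{i+1}(W(G_{n-1}))$ exactly when the latter index is $p$, whereas when it is $p^2$ one must descend to $\psi(\gamma_{i-2}(G_n))$ to achieve coverage (\cref{gamma WG vs gamma Gn 1} versus \cref{gamma WG vs gamma Gn 2}); it is this off-by-one ``sandwiching'', together with a second appeal to $\delta\ne 0$ inside \cref{gamma WG vs gamma Gn 2}, that pins the thresholds to exactly $\ell(m)$ and $r(m)$ rather than ``up to a bounded error''. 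Your proposal correctly flags the phase transitions as the hard point but does not supply this mechanism, and an induction on $i$ inside $G$ itself (rather than on the level $n$ of the congruence quotients $G_n$, with the CSP used to transfer back to $G$ at the end) would be difficult to close without it.
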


We remark that Theorem A does not hold in this form if the GGS-group is not of FG-type
(see \cref{indices not FG-type}).
On the other hand, a standard argument in profinite group theory (see \cref{index of gammai equal})
yields the following corollary.

\begin{corollaryB}
Let $p$ be an odd prime and let $G$ be a GGS-group of FG-type defined over the $p$-adic tree.
Then the profinite completion of $G$ is a pro-$p$ group of lower central width $2$.
\end{corollaryB}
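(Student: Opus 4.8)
The plan is to deduce everything from Theorem~A together with two standard facts: the congruence subgroup property, which identifies the profinite completion $\hat G$ with the closure $\overline G$ of $G$ in $\Aut(\TT)$, and the general principle that, when all lower central factors are finite, profinite completion leaves the indices of the lower central series unchanged.

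First I would confirm that $G$ satisfies the congruence subgroup property. Being of FG-type, $G$ has $\delta\neq 0$; on the other hand the GGS-group with constant defining vector $(c,\dots,c)$ has $\delta=c\sum_{i=1}^{p-1} i=c\cdot\tfrac{(p-1)p}{2}=0$ in $\F_p$, so $G$ is not that exceptional group. By the results recalled in the introduction, every GGS-group except the constant one enjoys the congruence subgroup property, hence $\hat G\cong\overline G$. Since $\overline G\le\Gamma(p)$ and $\Gamma(p)$ is a pro-$p$ group, it follows at once that $\hat G$ is a pro-$p$ group.

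Next I would transfer the indices. By Theorem~A every factor $\gamma_i(G)/\gamma_{i+1}(G)$ is finite of order $p$ or $p^2$, so each quotient $G/\gamma_i(G)$ is a finite $p$-group; thus $\gamma_i(G)$ is open in the pro-$p$ topology and its closure in $\hat G$ gives $\hat G/\overline{\gamma_i(G)}\cong G/\gamma_i(G)$. Identifying $\overline{\gamma_i(G)}$ with $\gamma_i(\hat G)$ and comparing successive quotients yields
\[
|\gamma_i(\hat G):\gamma_{i+1}(\hat G)|=|\gamma_i(G):\gamma_{i+1}(G)|
\]
for all $i\ge 1$; this is precisely the standard argument encapsulated in \cref{index of gammai equal}. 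In particular the complete list of indices of Theorem~A is reproduced, and since the maximum among them is $p^2$, the pro-$p$ group $\hat G$ has lower central width $2$.

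The only point requiring care is the identification $\gamma_i(\hat G)=\overline{\gamma_i(G)}$, that is, that the topological lower central series of $\hat G$ coincides with the closures of the abstract terms of $G$. This holds because $\hat G$ is topologically finitely generated --- indeed $\hat G$ is topologically generated by the images of the two generators $a,b$ of $G$ --- and for topologically finitely generated pro-$p$ groups the terms of the lower central series are closed and agree with the closures of the corresponding abstract terms, $G$ being dense in $\hat G$. With this in hand, the corollary follows directly from Theorem~A.
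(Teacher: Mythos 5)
Your proposal is correct and follows essentially the same route as the paper: Corollary~B is deduced from Theorem~A via the standard transfer of lower central indices to the profinite completion (the paper's \cref{index of gammai equal}), together with the congruence subgroup property to see that $\hat G$ is pro-$p$. The only quibble is your justification of the identification $\gamma_i(\hat G)=\overline{\gamma_i(G)}$: for the \emph{topologically closed} lower central series (which is how the paper defines $\gamma_i(\hat G)$) this follows from density of $G$ together with continuity of the commutator map and compactness of $\hat G$, not from topological finite generation --- and the paper in fact sidesteps even this by using only the easy inclusion $\overline{\gamma_i(G)}\le\gamma_i(\hat G)$ combined with a counting argument through the finite quotient $G/\gamma_i(G)$.
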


It has been brought to our attention by B.\ Klopsch and A.\ Thillaisundaram \cite{Klopsch-Thillaisundaram}
that they have more generally proved that non-periodic multi-EGS groups with the congruence subgroup property
have finite lower central width.
However, their methods do not provide the description of the lower central series nor the exact
values of the indices $|\gamma_i(G):\gamma_{i+1}(G)|$ for every $i$.

\vspace{8pt}

We conclude the introduction by sketching the strategy that we follow for the proof of Theorem A, which is
reflected in the structure of the paper:
\begin{enumerate}
\item
If $G$ is a GGS-group then $|G:G'|=p^2$ and so $|G:\gamma_i(G)|$ is finite for every $i\ge 1$.
Now if the defining vector of $G$ is not constant then $G$ satisfies the congruence subgroup property
and the $n$th level stabiliser $\St_G(n)$ is contained in $\gamma_{i+1}(G)$ for some $n$.
Hence the quotient $\gamma_i(G)/\gamma_{i+1}(G)$ is isomorphic to
$\gamma_i(G_n)/\gamma_{i+1}(G_n)$, where $G_n=G/\St_n(G)$ is a finite $p$-group
(the \emph{$n$th congruence quotient of $G$}).
\item
Thus our goal is to determine the lower central series of $G_n$ for every $n\in\N$.
We accomplish this by induction on $n$.
The cases $n=1$ and $n=2$ are trivial, and we devote \cref{sec:n at most 3} to the study of the case $n=3$,
which is critical for the induction.
This requires a very careful analysis of the position of some commutators in the generators of $G_3$, as well as
the fact that $G_3$ acts uniserially on the level stabiliser $\St_{G_3}(2)$.
This last property is proved more generally in \cref{sec:uniseriality} for $G_n$ acting on $\St_{G_n}(n-1)$,
and for an arbitrary non-periodic GGS-group, i.e.\ provided that $\varepsilon\ne 0$.
The condition $\delta\ne 0$ of the definition of groups of FG-type is needed precisely to settle the case
$n=3$: if $\delta=0$ then we get into further technical difficulties and the structure of the lower central
series of $G_3$ is not clear, and actually differs from that in Theorem A (see \cref{indices not FG-type}).
\item
The induction step to pass from $G_{n-1}$ to $G_n$ is completed in \cref{sec:proof theorem A}.
The approach here is to compare the lower central series of $G_n$ with that of the wreath product
$W(G_{n-1})=G_{n-1}\wr C_p$, by using the map $\psi$ that embeds the group $G$ into $W(G)=G\wr C_p$
(see \cref{sec:uniseriality}).
As we will see, some terms of the lower central series of $\psi(G_n)$ coincide with terms of the lower central
series of $W(G_{n-1})$, while some other terms appear as ``sandwiches" of the series of $W(G_{n-1})$.
At this point, it is essential to have a very detailed knowledge of the lower central series of groups of the
form $W(P)$, where $P$ is a finite $p$-group with lower central factors of exponent $p$.
This problem is addressed in \cref{sec:LCS W(G)}.
\end{enumerate}

\vspace{5pt}

\noindent
\textit{Notation.}
In the remainder, $p$ stands for an odd prime number, and $\sigma$ denotes the $p$-cycle
$(1\ 2\ \ldots \ p)$.
We let $\Sym(X)$ be the symmetric group on a set $X$.
If $G$ is a group then we write $B(G)=G\times \overset{p}{\cdots} \times G$ and
$W(G)=G\wr \langle \sigma \rangle=B(G)\rtimes \langle \sigma \rangle$.
As usual, $H^G$ stands for the normal closure of $H$ in $G$.
Also if $H$ and $J$ are subgroups of $G$ and $N\trianglelefteq G$, we write $H\equiv J \mod N$ to mean that
$H$ and $J$ have the same image in the quotient $G/N$, in other words, that $HN=JN$.
In order to avoid cumbersome notation, we will frequently write the image of an element $g\in G$ in a quotient
$G/N$ still as $g$ rather than $gN$ or $\overline g$: it will be clear from the context whether we are considering
$g$ in $G$ or in $G/N$.
This will apply basically to the image of an element $g$ of a GGS-group $G$ in a congruence quotient $G_n$.
On the other hand, if a map $\varphi:K\rightarrow L$ between two groups induces naturally a map between two
quotients of $K$ and $L$, we usually still denote the induced map by the same letter $\varphi$.

\vspace{8pt}

\noindent
\textit{Acknowledgements.}
We thank Rostislav I.\ Grigorchuk for information about the state of the art regarding branch groups
of finite lower central width.
We acknowledge the use of the GAP system \cite{GAP} to explore the lower central series of small congruence
quotients of GGS-groups.
We are also grateful to Jan Moritz Petschick for help with calculations with GAP.

\section{Preliminaries and uniseriality in non-periodic GGS-groups}
\label{sec:uniseriality}

In this section we first introduce all preliminaries about GGS-groups, and more specifically about groups
of FG-type, that we will need throughout the paper.
Our main reference for the basic theory of GGS-groups is the article \cite{Gustavo}.
Then we move on to study the action of the congruence quotient $G_n$ of a GGS-group $G$ on the level
stabiliser $\St_{G_n}(n-1)$, and we prove that this action is uniserial if $G$ is not periodic.

\vspace{8pt}

Let $X=\{1,\ldots,p\}$.
Our model of the $p$-adic tree $\TT$ consists in taking as vertices the elements of the free monoid
$X^*$, and connecting $v$ to $vx$ with an edge for every $v\in X^*$ and every $x\in X$.
The root of $\TT$ is then the empty word $\varnothing$, and the words of length $n$ constitute the $n$th
level of the tree.
Then $\Aut(\TT)$, the set of automorphisms of $\TT$ as a graph, is a group with respect to the operation of
composition, which we write by juxtaposition: if $f,g\in\Aut(\TT)$ then $fg$ is the automorphism that results
from applying first $f$ and then $g$.
This is consistent with considering the action of $\Aut(\TT)$ on $\TT$ as a right action: we denote the image of
$f\in\Aut(\TT)$ on $v\in X^*$ as $vf$ or $(v)f$.
Also we write $\St(n)$ for the normal subgroup of $\Aut(\TT)$ consisting of all automorphisms acting trivially on
the $n$th level.

\vspace{8pt}

Every automorphism $f$ of $\TT$ can be described by giving the permutation $\sigma(f)\in \Sym(p)$ that it induces
on the vertices of the first level, together with the so-called sections $f|_x$ at every vertex $x$ of the first
level.
These sections are defined via the formula
\[
(xv)f = xf \cdot vf|_x,
\]
for every $v\in X^*$ and $x\in X$.
Then we have a group isomorphism
\[
\begin{matrix}
\psi & \colon & \Aut(\TT) & \longrightarrow & \Aut(\TT) \wr \Sym(p)
\\[5pt]
& & f & \longmapsto & (f|_1,\ldots,f|_p) \, \sigma(f).
\end{matrix}
\]
If $f\in\St(1)$ then we simply write $\psi(f)=(f|_1,\ldots,f|_p)$.

\vspace{8pt}

The map $\psi$ can be used to define automorphisms of $\TT$.
Let $a$ be given by
\[
\psi(a) = (1,\overset{p}{\ldots},1) \, \sigma,
\]
where we recall that $\sigma=(1\ 2\ \ldots \ p)$.
Thus $a$ permutes cyclically the $p$ subtrees hanging from the root, and it is of order $p$.
As a consequence, it makes sense to write $a^e$ for an element $e\in\F_p$.
On the other hand, for a fixed tuple $\mathbf{e}=(e_1,\ldots,e_{p-1})\in \F_p^{p-1}$,
we define the automorphism $b\in\St(1)$ recursively via
\[
\psi(b) = (a^{e_1},\ldots,a^{e_{p-1}},b).
\]
Note that $b$ is also of order $p$.
Then the group $G=\langle a,b \rangle$ is called the \emph{GGS-group with defining vector $\mathbf{e}$}.

For a given $f\in\St(1)$, we have
\begin{equation}
\label{f conjugate a}
\psi(f^a) = (f|_p,f|_1,\ldots,f|_{p-1}).
\end{equation}
As a consequence,
\begin{equation}
\label{commutator of f and a}
\psi([f,a]) = (f|_1^{-1}f|_p,f|_2^{-1}f|_1,\ldots,f|_p^{-1}f|_{p-1}),
\end{equation}
a fact that we will freely use multiple times throughout the paper.
From \eqref{f conjugate a} and the definition of $a$ and $b$, it readily follows that, for a GGS-group $G$, we have
\begin{equation}
\label{psi(G) and W(G)}
G\cong \psi(G) \subseteq (G\times \overset{p}{\cdots} \times G) \langle \sigma \rangle = W(G).
\end{equation}
Note that $\psi(G)$ is not equal to $W(G)$ in this case.
By \cite[Lemma 3.2]{Gustavo}, if the defining vector $\mathbf{e}$ is not constant then $G$ is
\emph{regular branch} over $\gamma_3(G)$.
More precisely, we have
\begin{equation}
\label{regular branch gamma3}
\gamma_3(G) \times \overset{p}{\cdots} \times \gamma_3(G)
=
\psi(\gamma_3(\St_G(1)))
\subseteq 
\psi(\gamma_3(G)).
\end{equation}
If furthermore the vector $\mathbf{e}$ is not symmetric then $G$ is regular branch over $G'$,
and we actually have \cite[Lemma 3.4]{Gustavo}
\begin{equation}
\label{regular branch G'}
G' \times \overset{p}{\cdots} \times G'
=
\psi(\St_G(1)')
\subseteq 
\psi(G').
\end{equation}
Be also aware that $|G:\St_G(1)'|=p^{\,p+1}$ by Theorem 2.14 of \cite{Gustavo}.

\vspace{8pt}

For a GGS-group $G$, we define $\St_G(n)=G\cap \St(n)$, the $n$th level stabiliser of $G$.
As mentioned in the introduction, if $\mathbf{e}$ is not constant then $G$ satisfies the congruence subgroup property,
i.e.\ every (normal) subgroup of $G$ of finite index contains $\St_G(n)$ for some $n\in\N$.
Also $G$ is just infinite: every non-trivial normal subgroup of $G$ is of finite index.
It follows from \eqref{regular branch gamma3} that
\begin{equation}
\label{psi stG(n)}  
\psi(\St_G(n))
=
\St_G(n-1) \times \overset{p}{\cdots} \times \St_G(n-1)
\end{equation}
for every $n\ge 3$; see \cite[Lemma 3.3]{Gustavo}.
Now let $G_n=G/\St_G(n)$ be the $n$th congruence quotient of $G$.
Then $G_n$ acts faithfully on the tree $\TT$ truncated at level $n$, and consequently $G_n$ embeds in $\Sym(p^n)$.
Also $\St_{G_n}(n-1)=\St_G(n-1)/\St_G(n)$ is an elementary abelian $p$-group, and in particular $\St_G(n-1)'\le \St_G(n)$.
From \eqref{psi(G) and W(G)} and \eqref{psi stG(n)}, $\psi$ induces an injective map (that we denote by the same letter)
from $G_n$ to $W(G_{n-1})$.
In \cref{sec:proof theorem A} we will obtain the lower central series of $G_n$ by comparing that of $\psi(G_n)$ with
that of $W(G_{n-1})$.

\vspace{8pt}

Now assume that $G$ is a GGS-group of FG-type, as in \cref{definition FG-type}.
If its defining vector $\mathbf{e}$ is symmetric then $e_i=e_{p-i}$ for $i=1,\ldots,(p-1)/2$, and so
\[
\delta = \sum_{i=1}^{p-1} \, ie_i = \sum_{i=1}^{(p-1)/2} \, ie_i + \sum_{i=1}^{(p-1)/2} \, (p-i)e_{p-i} = 0
\]
in $\F_p$, which is a contradiction.
Consequently $\mathbf{e}$ is not symmetric and $G$ satisfies \eqref{regular branch G'}.
On the other hand, since $G$ is not periodic, it follows from Theorem 2.4 and Lemma 2.7 of \cite{Gustavo} that
$|G:\St_G(2)|=p^{\,p+1}$.
Since $\St_G(1)'\le \St_G(2)$ and $|G:\St_G(1)'|=p^{\,p+1}$, we conclude that $\St_G(2)=\St_G(1)'$.
Thus \eqref{regular branch G'} implies that
\begin{equation}
\label{psi stG(2)}  
\psi(\St_G(2))
=
G' \times \overset{p}{\cdots} \times G'.
\end{equation}
Another consequence of $G$ not being periodic and $\mathbf{e}$ not being symmetric is that the order of the
congruence quotient $G_n$ is given by
\begin{equation}
\label{order Gn}
|G_n| = p^{\,p^{n-1}+1}
\end{equation}
for every $n\ge 2$; see \cite[Theorem A and Lemma 2.7]{Gustavo}.

After this quick introduction to the basic properties of GGS-groups and groups of FG-type that we are going
to need, our next purpose is to show that, for a non-periodic GGS-group $G$, the $n$th congruence quotient $G_n$
acts uniserially on $\St_{G_n}(n-1)$.
We start by recalling the concept of uniserial action.

\begin{definition}
Let $G$ be a finite $p$-group acting on another finite $p$-group $K$.
We say that $G$ \emph{acts uniserially} on $K$, or that the action of $G$ on $K$ is \emph{uniserial} if
for every non-trivial $G$-invariant subgroup $L$ of $K$ we have $|L:[L,G]|=p$.
\end{definition}

If $G$ acts uniserially on $K$, it is well-known that the only $G$-invariant subgroups of $K$ are precisely those of the
form $[K,G,\overset{i}{\ldots},G]$ with $i\ge 0$; see \cite[Lemma 4.1.3]{LeedhamGreen-McKay}.
A typical example of a uniserial action is given by a finite $p$-group of maximal class acting on one of its maximal subgroups
by conjugation.
For an arbitrary GGS-group $G$, we know that $G_2=\langle a \rangle \ltimes \St_{G_2}(1)$ is a $p$-group of maximal class
\cite[Theorem 2.4]{Gustavo}, and so $G_2$ acts uniserially on $\St_{G_2}(1)$.
Actually, since $\St_{G_2}(1)$ is abelian, we can further say that $\langle g \rangle$ acts uniserially on $\St_{G_2}(1)$
for every $g\in G\smallsetminus \St_{G_2}(1)$.
In the next theorem we generalise this property to other congruence quotients $G_n$, provided that $G$ is non-periodic.
We are going to rely on the following fact for its proof: for a subgroup $H$ of a group $G$ and an element $g\in G$,
if $H$ is $g$-invariant then
\begin{equation}
\label{comm H with <g>}
[H,\langle g \rangle] = [H,g].
\end{equation}
This is an immediate consequence of the identities
\[
[h,g^i]
=
[h,g] [h,g]^g \cdots [h,g]^{g^{i-1}}
=
[h,g] [h^g,g] \cdots [h^{g^{i-1}},g]
\]
for $i\ge 1$, and
\[
[h,g^{-1}]
=
([h,g]^{g^{-1}})^{-1}
=
[h^{g^{-1}},g]^{-1},
\]
which hold for every $h\in H$.

\begin{theorem}
\label{uniserial}
Let $G$ be a non-periodic GGS-group.
Then the following hold for every $n\in\N$:
\begin{enumerate}
\item 
The element $a^{\varepsilon}b$ has order $p^n$ in $G_n$.
\item
If $g\in G_n$ is an element of order $p^{n-1}$ in $G_{n-1}$ then the cyclic subgroup $\langle g \rangle$ acts
uniserially on $\St_{G_n}(n-1)$.
\end{enumerate}
In particular, $G_n$ acts uniserially on $\St_{G_n}(n-1)$, and the only subgroups of $\St_{G_n}(n-1)$ that are
normal in $G_n$ are those of the form
\[
[\St_{G_n}(n-1),G_n,\overset{i}{\ldots},G_n].
\]
Also if $h$ is a generator of $\St_{G_n}(n-1)$ modulo $[\St_{G_n}(n-1),G_n]$ then, for every $i\ge 1$,
$[h,a^{\varepsilon}b,\overset{i}{\ldots},a^{\varepsilon}b]$ is a generator of
$[\St_{G_n}(n-1),G_n,\overset{i}{\ldots},G_n]$ modulo $[\St_{G_n}(n-1),G_n,\overset{i+1}{\ldots},G_n]$.
\end{theorem}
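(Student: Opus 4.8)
The plan is to prove (i) and (ii) simultaneously by induction on $n$, and then read off the three ``in particular'' assertions. I first reformulate (ii) as a statement about a single fixed-point space. Writing $V=\St_{G_n}(n-1)$, I view $V$ as an $\F_p$-space on which $g$ acts by conjugation; as $g$ is a $p$-element this action is unipotent, so $N:=g-1$ is nilpotent and $[V,g]=NV$ by \eqref{comm H with <g>}. The number of Jordan blocks of $N$ equals $\dim_{\F_p}C_V(g)$, and $\langle g\rangle$ acts uniserially on $V$ precisely when $N$ is a single Jordan block, i.e.\ when $|C_V(g)|=p$: taking $L=C_V(g)$ in the definition of uniseriality forces $\dim C_V(g)=1$, while a single block makes the $\langle g\rangle$-invariant subspaces into the chain $0\subset\ker N\subset\ker N^2\subset\cdots$ with all indices equal to $p$. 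Thus (ii) amounts to showing $\dim_{\F_p}C_V(g)=1$.

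For the induction I use the embedding $\psi\colon G_n\hookrightarrow W(G_{n-1})$ of \eqref{psi(G) and W(G)} together with the elementary fact that every first-level section of an element of $\St_G(n-1)$ lies in $\St_G(n-2)$, so that $\psi(V)\subseteq (V')^p$ with $V'=\St_{G_{n-1}}(n-2)$. The base case $n=1$ is immediate: $a^\varepsilon b\equiv a^\varepsilon$ has order $p$ as $\varepsilon\ne 0$, and $\St_{G_1}(0)=G_1\cong C_p$ carries the trivial conjugation action. For the inductive step of (i), put $t=a^\varepsilon b$; a direct computation with \eqref{f conjugate a} shows $t^p\in\St(1)$ with each first-level section equal to a cyclic product of the entries of $(a^{e_1},\dots,a^{e_{p-1}},b)$, hence of the form $a^{c_1}ba^{c_2}$ with $c_1+c_2=\varepsilon$, which is conjugate in $G$ to $t$. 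By induction each such section has order $p^{n-1}$ in $G_{n-1}$, so $t^p$ has order $p^{n-1}$ in $G_n$; since $t\notin\St(1)$, this yields that $t$ has order $p^n$.

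For the step of (ii), I first note $g\notin\St(1)$: otherwise the image of $g$ in $G_{n-1}$ would lie in $\St_{G_{n-1}}(1)$ with all sections in $G_{n-2}$, forcing its order to divide the exponent $p^{n-2}$ of $G_{n-2}$ (which embeds in $C_p\wr\cdots\wr C_p$), contrary to hypothesis. Hence $\psi(g)=(h_1,\dots,h_p)\sigma^k$ with $\sigma^k$ a $p$-cycle, so $g$ permutes the $p$ coordinates of $\psi(V)\subseteq(V')^p$ cyclically with twists $h_i$. Writing $\psi(v)=(v_1,\dots,v_p)$, the equation $v^g=v$ becomes the recurrence $v_{(j)\sigma^k}=v_j^{\,h_j}$; thus $v$ is determined by $v_1$, and going once around the cycle gives $v_1=v_1^{\,s}$, where $s=h_1 h_{(1)\sigma^k}\cdots h_{(1)\sigma^{(p-1)k}}$ is a first-level section of $g^p$. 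All sections of $g^p$ are cyclic conjugates of one another, so they share the order of $g^p$ in $G_{n-2}$, namely $p^{n-2}$; therefore $s\in G_{n-1}$ has order $p^{n-2}$ in $G_{n-2}$. Consequently $v\mapsto v_1$ is an injection $C_V(g)\hookrightarrow C_{V'}(s)$, and the inductive hypothesis (ii) applied to $\langle s\rangle$ acting on $V'$ gives $\dim C_{V'}(s)=1$. Hence $\dim C_V(g)\le 1$; since $t=a^\varepsilon b$ has order $p^n>p^{n-1}$ in $G_n$ we have $V\ne 1$, and unipotence forces $\dim C_V(g)\ge 1$, so $\dim C_V(g)=1$ and $\langle g\rangle$ acts uniserially.

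Finally I deduce the three assertions. Taking $g=a^\varepsilon b$, which has order $p^{n-1}$ in $G_{n-1}$ by (i), part (ii) shows $\langle a^\varepsilon b\rangle$ acts uniserially on $V$; as any nontrivial $G_n$-invariant $L\le V$ is $\langle a^\varepsilon b\rangle$-invariant and $[L,G_n]<L$ in the nilpotent group $G_n$, we get $p=|L:[L,a^\varepsilon b]|\ge|L:[L,G_n]|>1$, so $G_n$ acts uniserially and \cite[Lemma 4.1.3]{LeedhamGreen-McKay} identifies its invariant subgroups as the terms $[V,G_n,\overset{i}{\dots},G_n]$. Moreover the inclusion $[V,a^\varepsilon b,\overset{i}{\dots},a^\varepsilon b]\subseteq[V,G_n,\overset{i}{\dots},G_n]$ between two uniserial chains that both descend from $V$ to $1$ in steps of index $p$ must be an equality for every $i$; writing $N=a^\varepsilon b-1$ and using that $V$ is a cyclic $\F_p[\langle a^\varepsilon b\rangle]$-module generated by any $h\notin[V,a^\varepsilon b]$, the element $N^{i}h=[h,a^\varepsilon b,\overset{i}{\dots},a^\varepsilon b]$ generates $[V,G_n,\overset{i}{\dots},G_n]$ modulo $[V,G_n,\overset{i+1}{\dots},G_n]$, as required. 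I expect the crux to be the fixed-point computation in (ii): setting up the recurrence, recognising the return map as a section of $g^p$, and carrying out the order bookkeeping that places $s$ within the scope of the inductive hypothesis. Phrasing the reduction through the injectivity of $v\mapsto v_1$ alone, rather than through an isomorphism $V\cong(V')^p$, is what makes the argument insensitive to the particular non-periodic defining vector.
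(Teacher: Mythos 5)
Your argument is correct. For part (i) you follow essentially the same route as the paper: compute the first-level sections of $(a^{\varepsilon}b)^p$, observe that each is a cyclic product of the entries of $\psi(b)$ and hence a conjugate of $a^{\varepsilon}b$, and induct on $n$. For part (ii), however, your proof is genuinely different. The paper works with the decomposition $\psi_{n-1}$ at level $n-1$: since $g$ has order $p^{n-1}$ in $G_{n-1}$, it induces a $p^{n-1}$-cycle on the $(n-1)$st level, and transitivity forces any $g$-fixed element of $\St_{G_n}(n-1)$ to have a constant level-$(n-1)$ portrait with values in $\langle\sigma\rangle$; this bounds $|C_L(g)|\le p$ for every invariant $L$ in one stroke, with no induction. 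You instead peel off one level at a time: using the first-level embedding $\psi$, you turn the fixed-point equation $v^g=v$ into a recurrence around the $p$-cycle at the root, recognise the return map as conjugation by a first-level section $s$ of $g^p$, check that $s$ has order $p^{n-2}$ in $G_{n-2}$ (all sections of $g^p$ being conjugate to one another), and invoke the inductive hypothesis to get $|C_V(g)|\le|C_{V'}(s)|=p$. Your linear-algebra reformulation of uniseriality as ``$N=g-1$ is a single Jordan block'' is the same computation as the paper's $|L:[L,\langle g\rangle]|=|C_L(g)|$ via the first isomorphism theorem, just packaged globally. What the paper's version buys is brevity --- no induction and no bookkeeping with $g^p$; what yours buys is that it never leaves the self-similar first-level picture used everywhere else in the paper. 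Two cosmetic points: the phrase ``they share the order of $g^p$ in $G_{n-2}$'' should say that the common order of the sections in $G_{n-2}$ equals the order of $g^p$ in $G_{n-1}$, namely $p^{n-2}$; and your derivation of the final generator statement (equality of the two uniserial chains, plus cyclicity of $\St_{G_n}(n-1)$ as an $\F_p[\langle a^{\varepsilon}b\rangle]$-module) is more detailed than the paper's ``the rest follows from (i) and (ii)'', but matches it in substance.
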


\begin{proof}
(i)
We use induction on $n$, the case $n=1$ being obvious, since $\varepsilon\ne 0$ in $\F_p$.
Now assume that $n>1$ and set $t=a^{\varepsilon}$.
Then
\[
(tb)^p= b^{t^{p-1}} \cdots b^t b.
\]
We are going to obtain the order of $tb$ in $G_n$ by applying $\psi$ to the equality above and by looking at
the orders of the corresponding components in $G_{n-1}$.

Since $\varepsilon\ne 0$, we have $\langle t \rangle=\langle a \rangle$ and consequently $\psi((tb)^p)$ is
the product of the tuples $\psi(b),\psi(b^a),\ldots,\psi(b^{a^{p-1}})$ in some unspecified order.
Taking into account \eqref{f conjugate a}, it follows that every entry of $\psi((tb)^p)$ is of the form
\begin{equation}
\label{entry pth power}
a^{f_1}\ldots a^{f_k} b a^{f_{k+1}} \ldots a^{f_{p-1}} = a^{f_1+\cdots+f_{p-1}} b^{a^{f_{k+1}+\cdots+f_{p-1}}}
= \big( a^{f_1+\cdots+f_{p-1}} b \big)^{a^{f_{k+1}+\cdots+f_{p-1}}},
\end{equation}
where $\{f_1,\ldots,f_{p-1}\}=\{e_1,\ldots,e_{p-1}\}$.
Thus \eqref{entry pth power} is equal to
\[
(tb)^{a^{f_{k+1}+\cdots+f_{p-1}}}.
\]
Thus all components of $\psi((tb)^p)$ have the same order in $G_{n-1}$, namely the same as the order of $tb$ in $G_{n-1}$.
By induction, this implies that the order of $(tb)^p$ in $G_n$ is $p^{n-1}$, and the result follows.

(ii)
Let $S=\St_{G_n}(n-1)$ and let $L\ne 1$ be a $\langle g \rangle$-invariant subgroup of $S$.
The action of $g$ on the $p^{n-1}$ vertices of the $(n-1)$st level induces a permutation $\tau\in\Sym(p^{n-1})$,
and the order of $g$ in $G_{n-1}$ coincides with the order of $\tau$.
Since, by hypothesis, $g$ has order $p^{n-1}$ in $G_{n-1}$, it follows that $\tau$ is a cycle of length $p^{n-1}$.
In particular, $\tau$ acts transitively on the vertices of the $(n-1)$st level.

Now consider the map
\[
\begin{matrix}
\kappa & \colon & L & \longrightarrow & L
\\
& & \ell & \longmapsto & [\ell,g],
\end{matrix}
\]
which is a group homomorphism, since $L$ is abelian (recall that $S$ is elementary abelian).
By \eqref{comm H with <g>}, we have $[L,\langle g \rangle]=\{[\ell,g] \mid \ell\in L\}=\kappa(L)$.
Hence the first isomorphism theorem yields
\[
|L:[L,\langle g \rangle]|=|\ker\kappa|=|C_L(g)|.
\]

Now since every element $\ell\in L$ fixes the vertices of the $(n-1)$st level, it permutes the $p$ descendants
of each of these vertices.
Thus $\ell$ defines a tuple of $p^{n-1}$ permutations in $\Sym(p)$, which we denote by $\psi_{n-1}(\ell)$. 
Actually, since $G$ is a subgroup of $\Gamma(p)$, each of these permutations lies in the subgroup
$\langle \sigma \rangle$ of order $p$.
On the other hand, one can easily see that $\psi_{n-1}(\ell^g)$ is obtained from $\psi_{n-1}(\ell)$ by permuting
its components as indicated by $\tau$.
Since $\tau$ acts transitively on the vertices of the $(n-1)$st level, it follows that $\ell\in C_L(g)$
if and only if the tuple $\psi_{n-1}(\ell)$ is constant (with values in $\langle \sigma \rangle$).
Thus $|L:[L,\langle g \rangle]|=|C_L(g)|\le p$.
Finally, since $G_n$ is a finite $p$-group and $L\ne 1$, the commutator $[L,\langle g \rangle]$ is properly
contained in $L$, and we conclude that $|L:[L,\langle g \rangle]|=p$.

The rest of the assertions follow immediately from (i) and (ii).
\end{proof}

\section{The lower central series of some wreath products}
\label{sec:LCS W(G)}

Let $G$ be a group.
Recall that we denote by $W(G)$ the wreath product $G\wr \langle \sigma \rangle$, where
$\sigma=(1\ 2\ \ldots \ p)$.
Then $W(G)=B(G) \rtimes \langle \sigma \rangle$, where $B(G)=G\times \overset{p}{\cdots} \times G$
is the base group of the wreath product.
Let $\mathbf{g}$ be an arbitrary element of $B(G)$.
For the purpose of this section, it is more convenient to write $\mathbf{g}=(g_0,\ldots,g_{p-1})$, with $g_i\in G$ for all
$i=0,\ldots,p-1$, rather than in the more natural form $(g_1,\ldots,g_p)$.
In any case, we have
\[
\mathbf{g}^{\sigma} = (g_0,g_1,\ldots,g_{p-1})^{\sigma} = (g_{p-1},g_0,\ldots,g_{p-2}).
\]
In this setting, we define the map
\[
\begin{matrix}
\Delta & \colon & B(G) & \longrightarrow & B(G)
\\
& & \mathbf{g} & \longmapsto & [\mathbf{g},\sigma].
\end{matrix}
\]
Note that $\Delta$ is a group homomorphism if $G$ is abelian.
Clearly, if $N\trianglelefteq G$ and $N\le H\le G$ then we have
\[
\Delta(B(H/N)) = \Delta(B(H))B(N)/B(N).
\]

The goal of this section is to describe the lower central series of $W(G)$ under the assumption that $G$ is a
finite $p$-group and that the factors of the lower central series of $G$ are elementary abelian
(equivalently, that the abelianisation $G/G'$ is elementary abelian; see \cite[Theorem 2.26]{Robinson-Finiteness1}).
We start by studying the case of $W(A)$, where $A$ is an elementary abelian finite $p$-group.
Liebeck \cite[Theorem 5.1]{Liebeck} showed that the nilpotency class of $W(A)$ is $p$, and our goal now is
to give a detailed description of the lower central series of $W(A)$.
We first assume that $A$ is a finite field $\F$ of characteristic $p$, with the group structure
given by addition.
This has the advantage that the base group $B(\F)$ is also an $\F$-vector space.
We will see that the terms $\gamma_i(W(\F))$ of the lower central series of $W(\F)$ are vector subspaces of $B(\F)$.
As usual in linear algebra, we will describe these subspaces by giving both a basis and
a set of defining linear equations.
Note that $\Delta:B(\F)\rightarrow B(\F)$ is an $\F$-linear map in this case, and that for an arbitrary subgroup
$H$ of $B(\F)$, we have
\begin{equation}
\label{comm H and W(F)}
[H,W(\F)] = [H,\sigma] = \{ [\boldsymbol{\lambda},\sigma] \mid \boldsymbol{\lambda}\in H \} = \Delta(H).
\end{equation}

Now we consider the factor algebra $\F[X]/(X^p-1)$.
Since $\F$ is of characteristic $p$, this is a local $\F$-algebra whose only maximal ideal $\mathfrak{m}$
is generated by the element $\overline{X-1}$.
All the ideals of $\F[X]/(X^p-1)$ are then the powers $\mathfrak{m}^i$ for $0\le i\le p$.
The quotient $\mathfrak{m}^i/\mathfrak{m}^{i+1}$ has $\F$-dimension $1$ for $0\le i\le p-1$, and is generated
by the image of $(\overline{X-1})^i$.
We have the following isomorphism of $\F$-vector spaces:
\[
\begin{matrix}
\Theta & \colon & B(\F) & \longrightarrow & \F[X]/(X^p-1)
\\[5pt]
& & \boldsymbol{\lambda}=(\lambda_0,\lambda_1,\dots,\lambda_{p-1}) & \longmapsto
& \overline{\lambda_0+\lambda_1X+\cdots+\lambda_{p-1}X^{p-1}}.
\end{matrix}
\]
Under this isomorphism, the action of $\sigma$ on $B(\F)$ corresponds to multiplication by $\overline X$ in
$\F[X]/(X^p-1)$, and taking commutators with $\sigma$ corresponds to multiplication by $\overline{X-1}$.
As a consequence, $\Theta$ extends to an isomorphism of $W(\F)$ with the semidirect product
$\langle \overline X \rangle \ltimes \F[X]/(X^p-1)$, where the corresponding action is given by multiplication
in the algebra
(note that $\overline X$ is a unit of multiplicative order $p$ in $\F[X]/(X^p-1)$).

By \eqref{comm H and W(F)}, we have
\[
\gamma_i(W(\F))
=
[B(\F),\sigma,\overset{i-1}{\ldots},\sigma]
=
\Delta^{i-1}(B(\F))
\]
for every $i\ge 2$.
Consequently $\Theta(\gamma_i(W(\F)))=\mathfrak{m}^{i-1}$.
This proves, in particular, Liebeck's result that $W(\F)$ has nilpotency class $p$.
Also the subgroups $\gamma_i(W(\F))$ are $\F$-subspaces of $B(\F)$.
Note that
\[
|\gamma_i(W(\F)):\gamma_{i+1}(W(\F))| = |\mathfrak{m}^{i-1}:\mathfrak{m}^i| = |\F|
\]
for $2\le i\le p$.

On the other hand, an $\F$-subspace $H$ of $B(\F)$ is normal in $W(\F)$, i.e.\ invariant under $\sigma$, if and only
if $\Theta(H)$ is an $\F$-subspace of $\F[X]/(X^p-1)$ invariant under multiplication by $\overline X$ or, equivalently,
an ideal of $\F[X]/(X^p-1)$.
Hence $\Theta(H)$ coincides with one of the powers $\mathfrak{m}^i$, and then
either $H=B(\F)$ or $H=\gamma_i(W(\F))$ with $2\le i\le p+1$.

We collect all the information obtained so far in the following theorem.
In order to make the statement more uniform, it is convenient to introduce a slight modification of notation,
replacing $W(\F)$ with $B(\F)$ in the lower central series.
In general, for any group of the form $W(G)$, we set
\[
\gamma_1^*(W(G))=B(G)
\]
and
\[
\gamma_i^*(W(G))=[\gamma_{i-1}^*(W(G)),W(G)],
\quad
\text{for every $i\ge 2$.}
\]
Since $W(G)/B(G)$ is cyclic, we have $W(G)'=[B(G),W(G)]$, and consequently
$\gamma_i(W(G))=\gamma_i^*(W(G))$ for $i\ge 2$.

\begin{theorem}
\label{LCS of W(F)}
Let $\F$ be a finite field of characteristic $p$.
Then the following hold:
\begin{enumerate}
\item 
For every $i\ge 1$ we have $\gamma_i^*(W(\F))=\Delta^{i-1}(B(\F))$, and $\Theta(\gamma_i^*(W(\F)))=\mathfrak{m}^{i-1}$, where
$\mathfrak{m}$ is the ideal of $\F[X]/(X^p-1)$ generated by $\overline{X-1}$.
\item
The only $\sigma$-invariant $\F$-subspaces of $B(\F)$ are those in the chain
\[
1=\gamma_{p+1}^*(W(\F)) \le \cdots \le \gamma_i^*(W(\F)) \le \cdots \le \gamma_1^*(W(\F)) = B(\F).
\]
\item
We have
\[
|\gamma_i^*(W(\F)):\gamma_{i+1}^*(W(\F))| = |\F|
\]
and
\[
|\gamma_i^*(W(\F))| = |\F|^{p-i+1}
\]
for $1\le i\le p+1$.
\end{enumerate}
In particular, the nilpotency class of $W(\F)$ is $p$.
\end{theorem}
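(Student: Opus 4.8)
The theorem is essentially a consolidation of the facts assembled in the paragraphs preceding it, and the cleanest plan is to route everything through the single isomorphism $\Theta$, under which the lower central series of $W(\F)$ becomes the filtration of $\F[X]/(X^p-1)$ by powers of its maximal ideal $\mathfrak{m}$. I would prove (i) first, since the identification $\Theta(\gamma_i^*(W(\F)))=\mathfrak{m}^{i-1}$ is the single fact from which (ii) and (iii) both read off directly.

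For (i), the plan is to show by induction on $i$ that $\gamma_i^*(W(\F))=\Delta^{i-1}(B(\F))$. The base case $i=1$ is the definition $\gamma_1^*(W(\F))=B(\F)=\Delta^0(B(\F))$. For the inductive step I would note that every $\gamma_{i-1}^*(W(\F))$ lies inside $B(\F)$ (it is even an $\F$-subspace, since $\Delta$ is $\F$-linear), so that \eqref{comm H and W(F)} applies to it and yields
\[
\gamma_i^*(W(\F))=[\gamma_{i-1}^*(W(\F)),W(\F)]=\Delta\bigl(\gamma_{i-1}^*(W(\F))\bigr)=\Delta^{i-1}(B(\F)).
\]
Applying $\Theta$, and using that $\Theta$ intertwines $\Delta$ with multiplication by $\overline{X-1}$ together with $\Theta(B(\F))=\F[X]/(X^p-1)$, gives
\[
\Theta\bigl(\gamma_i^*(W(\F))\bigr)=(\overline{X-1})^{i-1}\,\F[X]/(X^p-1)=\mathfrak{m}^{i-1},
\]
because $\mathfrak{m}$ is the principal ideal generated by $\overline{X-1}$.

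With (i) established, (ii) follows from the correspondence already recorded between $\sigma$-invariant $\F$-subspaces of $B(\F)$ and ideals of $\F[X]/(X^p-1)$: any such subspace $H$ satisfies $\Theta(H)=\mathfrak{m}^j$ for some $j$, hence $H=\gamma_{j+1}^*(W(\F))$ by (i). Since $(\overline{X-1})^p=\overline{(X-1)^p}=\overline{X^p-1}=0$ in characteristic $p$, one has $\mathfrak{m}^p=0$ while $\mathfrak{m}^0=\F[X]/(X^p-1)$, so the distinct powers $\mathfrak{m}^0,\dots,\mathfrak{m}^p$ translate into the descending chain from $\gamma_1^*(W(\F))=B(\F)$ to $\gamma_{p+1}^*(W(\F))=1$. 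For (iii) I would use that $\mathfrak{m}^{i-1}/\mathfrak{m}^i$ is one-dimensional over $\F$ for $0\le i-1\le p-1$, so $\dim_\F\mathfrak{m}^{i-1}=p-i+1$ and therefore $|\gamma_i^*(W(\F))|=|\F|^{p-i+1}$; the consecutive index $|\gamma_i^*(W(\F)):\gamma_{i+1}^*(W(\F))|=|\F|$ then holds exactly on the range $1\le i\le p$ where this graded piece is nontrivial. Finally, since $\gamma_i^*(W(\F))=\gamma_i(W(\F))$ for $i\ge 2$, the relations $\gamma_p(W(\F))\ne 1=\gamma_{p+1}(W(\F))$ give nilpotency class $p$.

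I do not expect a genuine obstacle here, as every substantive computation — the identification of $\Delta$ with multiplication by $\overline{X-1}$ and the determination of the ideal lattice of the local algebra $\F[X]/(X^p-1)$ — has already been carried out in the text. The only matters requiring care are bookkeeping ones: the $\gamma^*$-shift is genuinely needed because $\gamma_1^*(W(\F))=B(\F)$ differs from $\gamma_1(W(\F))=W(\F)$, and the index ranges must be kept honest, since both terms are trivial at $i=p+1$ and the factor $|\F|$ appears only for $1\le i\le p$.
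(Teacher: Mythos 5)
Your proposal is correct and follows essentially the same route as the paper, which presents this theorem as a summary of the preceding discussion: the identification $\gamma_i^*(W(\F))=\Delta^{i-1}(B(\F))$ via \eqref{comm H and W(F)}, the transport through $\Theta$ to the filtration $\mathfrak{m}^{i-1}$ of the local algebra $\F[X]/(X^p-1)$, and the derivation of (ii) and (iii) from its ideal lattice and the one-dimensionality of $\mathfrak{m}^{i-1}/\mathfrak{m}^{i}$. Your observation that the index formula in (iii) is only meaningful for $1\le i\le p$ is a fair and correct reading of the statement.
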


Now it is easy to give bases for the subspaces $\gamma_i^*(W(\F))$.

\begin{theorem}
\label{gens for gammaiW(F)}
Let $\F$ be a finite field of characteristic $p$.
For $1\le i\le p$, set
\[
\boldsymbol{\lambda}_i
=
\Delta^{i-1}(1,0,\overset{p-1}{\ldots},0) \in B(\F).
\]
Then the following hold:
\begin{enumerate}
\item
We have
\[
\boldsymbol{\lambda}_i
=
(\lambda_{i,0},\ldots,\lambda_{i,i-1},0,\overset{p-i}{\ldots},0),
\]
in $B(\F)$, where
\[
\lambda_{i,r} = (-1)^{i-r-1} \dbinom{i-1}{r}
\]
for $0\le r\le i-1$.
\item
The image of $\boldsymbol{\lambda}_i$ generates the quotient $\gamma_i^*(W(\F))/\gamma_{i+1}^*(W(\F))$
as an $\F$-vector space.
\item
The set $\{\boldsymbol{\lambda}_i,\ldots,\boldsymbol{\lambda}_p\}$ is an $\F$-basis of $\gamma_i^*(W(\F))$.
\end{enumerate}
\end{theorem}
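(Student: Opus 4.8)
The plan is to carry out everything through the isomorphism $\Theta\colon B(\F)\to \F[X]/(X^p-1)$, under which, as recorded just before \cref{LCS of W(F)}, the homomorphism $\Delta$ corresponds to multiplication by $\overline{X-1}$ and $\gamma_i^*(W(\F))$ corresponds to the ideal $\mathfrak{m}^{i-1}$. Since $\Theta(1,0,\overset{p-1}{\ldots},0)=\overline 1$ and both $\Theta$ and multiplication by $\overline{X-1}$ are $\F$-linear, applying $\Delta$ a total of $i-1$ times gives
\[
\Theta(\boldsymbol{\lambda}_i)=(\overline{X-1})^{i-1}=\overline{(X-1)^{i-1}}.
\]
Once this identity is in hand, part~(i) will be immediate from the binomial theorem, and parts~(ii) and~(iii) will follow formally from the structural information already supplied by \cref{LCS of W(F)}.

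For part~(i) I would expand
\[
(X-1)^{i-1}=\sum_{r=0}^{i-1}(-1)^{\,i-1-r}\binom{i-1}{r}X^r,
\]
a polynomial of degree $i-1<p$, so that no reduction modulo $X^p-1$ occurs and the coefficient of $X^r$ is read off directly as $\lambda_{i,r}=(-1)^{i-r-1}\binom{i-1}{r}$ for $0\le r\le i-1$, while the coefficient is $0$ for $r\ge i$. This is exactly the asserted shape of $\boldsymbol{\lambda}_i$. (Alternatively, one can bypass $\Theta$ and prove the formula by induction on $i$: the $r$th coordinate of $\Delta(\boldsymbol{\lambda}_i)$ equals $\lambda_{i,r-1}-\lambda_{i,r}$, and Pascal's rule $\binom{i-1}{r-1}+\binom{i-1}{r}=\binom{i}{r}$ reproduces $\lambda_{i+1,r}$.) I record for later use that the last nonzero coordinate of $\boldsymbol{\lambda}_i$ sits in position $i-1$, with value $\lambda_{i,i-1}=1$.

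For part~(ii), \cref{LCS of W(F)} gives $\Theta(\gamma_i^*(W(\F)))=\mathfrak{m}^{i-1}$ together with $|\gamma_i^*(W(\F)):\gamma_{i+1}^*(W(\F))|=|\F|$, so the quotient $\gamma_i^*(W(\F))/\gamma_{i+1}^*(W(\F))$ is one-dimensional over $\F$ and corresponds to $\mathfrak{m}^{i-1}/\mathfrak{m}^i$. As noted in the discussion of the local algebra $\F[X]/(X^p-1)$, this last quotient is generated by the image of $(\overline{X-1})^{i-1}=\Theta(\boldsymbol{\lambda}_i)$; hence $\boldsymbol{\lambda}_i$ generates $\gamma_i^*(W(\F))/\gamma_{i+1}^*(W(\F))$, as claimed.

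Finally, for part~(iii) I would combine a dimension count with the triangular shape obtained in part~(i). Each $\boldsymbol{\lambda}_j$ with $i\le j\le p$ lies in $\gamma_j^*(W(\F))\subseteq\gamma_i^*(W(\F))$, so $\{\boldsymbol{\lambda}_i,\ldots,\boldsymbol{\lambda}_p\}$ is a family of $p-i+1$ vectors inside $\gamma_i^*(W(\F))$, which has $\F$-dimension $p-i+1$ by \cref{LCS of W(F)}. It therefore suffices to verify linear independence, and this is immediate from part~(i): the vectors $\boldsymbol{\lambda}_i,\boldsymbol{\lambda}_{i+1},\ldots,\boldsymbol{\lambda}_p$ have their last nonzero entries in the distinct positions $i-1,i,\ldots,p-1$, so any nontrivial $\F$-combination is nonzero. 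Since the count matches the dimension, they form a basis. The argument is essentially formal once the identity $\Theta(\boldsymbol{\lambda}_i)=\overline{(X-1)^{i-1}}$ and the binomial expansion of part~(i) are established; the only place demanding care is keeping the signs and index ranges in part~(i) straight, and I expect no genuine obstacle beyond that.
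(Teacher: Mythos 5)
Your proposal is correct and follows essentially the same route as the paper: both transport the computation through $\Theta$ to the identity $\Theta(\boldsymbol{\lambda}_i)=(\overline{X-1})^{i-1}$, read off part (i) from the binomial expansion, and deduce (ii) and (iii) from the structure of the ideals $\mathfrak{m}^{i-1}$ established in \cref{LCS of W(F)}. You merely make explicit some details (the degree bound avoiding reduction modulo $X^p-1$, and the triangular-shape linear independence argument for (iii)) that the paper leaves implicit.
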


\begin{proof}
By using \cref{LCS of W(F)}, the $\F$-isomorphism $\Theta$ induces an $\F$-isomorphism between
$\gamma_i^*(W(\F))/\gamma_{i+1}^*(W(\F))$ and $\mathfrak{m}^{i-1}/\mathfrak{m}^i$.
The latter has $\F$-dimension $1$ and is generated by the image of
\[
(\overline{X-1})^{i-1}
=
\sum_{r=0}^{i-1} \, (-1)^{i-r-1} \dbinom{i-1}{r} \overline X^r.
\]
Since $\Theta$ sends $\boldsymbol{\lambda}_i$ to $(\overline{X-1})^{i-1}$, the result follows.
\end{proof}

Since $\F$ is of characteristic $p$ and
\begin{equation}
\label{lambdapr mod p}
\lambda_{p,r} = (-1)^{p-r-1} \frac{(p-1)(p-2)\ldots(p-r)}{r!}
\equiv (-1)^{p-r-1} (-1)^r = 1 \mod p
\end{equation}
for $0\le r\le p-1$, we immediately obtain the following description of the last non-trivial term
of the lower central series of $W(\F)$.

\begin{corollary}
\label{gammapW(F)}
Let $\F$ be a finite field of characteristic $p$.
Then
\[
\gamma_p(W(\F)) = \{ (\lambda,\overset{p}{\ldots},\lambda) \mid \lambda\in\F \}
\]
is the set of constant tuples in $B(\F)$.
\end{corollary}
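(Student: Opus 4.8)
The plan is to read off the result directly from \cref{gens for gammaiW(F)}, which already does all the heavy lifting. By part (iii) of that theorem with $i=p$, the single vector $\boldsymbol{\lambda}_p$ forms an $\F$-basis of $\gamma_p^*(W(\F))$, and since $p\ge 2$ we have $\gamma_p(W(\F))=\gamma_p^*(W(\F))$ by the remark preceding \cref{LCS of W(F)}. So the first step is simply to observe that $\gamma_p(W(\F))=\F\boldsymbol{\lambda}_p$, the set of all $\F$-scalar multiples of $\boldsymbol{\lambda}_p$.

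Next I would compute $\boldsymbol{\lambda}_p$ explicitly. By part (i) of \cref{gens for gammaiW(F)} we have $\boldsymbol{\lambda}_p=(\lambda_{p,0},\dots,\lambda_{p,p-1})$ with $\lambda_{p,r}=(-1)^{p-r-1}\binom{p-1}{r}$. The key arithmetic fact is the congruence $\binom{p-1}{r}\equiv(-1)^r\pmod p$, which is exactly what is recorded in \eqref{lambdapr mod p}; hence $\lambda_{p,r}=(-1)^{p-r-1}(-1)^r=(-1)^{p-1}=1$ in $\F$ for every $0\le r\le p-1$. Therefore $\boldsymbol{\lambda}_p=(1,\overset{p}{\ldots},1)$ is the all-ones constant tuple.

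Combining the two observations, $\gamma_p(W(\F))=\F\cdot(1,\overset{p}{\ldots},1)=\{(\lambda,\overset{p}{\ldots},\lambda)\mid\lambda\in\F\}$, which is precisely the set of constant tuples in $B(\F)$, as claimed. Since every step is a direct citation of already-established results plus the elementary binomial congruence, there is essentially no obstacle here; the only point requiring any care is the sign bookkeeping in evaluating $\lambda_{p,r}$, but this is precisely the content of \eqref{lambdapr mod p}, so it can simply be invoked. (Indeed the corollary is stated in the excerpt as an \emph{immediate} consequence of the preceding computation, so the proof amounts to spelling out that the entries of $\boldsymbol{\lambda}_p$ are all equal to $1$.)
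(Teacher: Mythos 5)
Your proposal is correct and follows exactly the paper's own route: the paper likewise derives the corollary immediately from Theorem~\ref{gens for gammaiW(F)} together with the computation \eqref{lambdapr mod p} showing $\lambda_{p,r}=1$ for all $0\le r\le p-1$, so that $\gamma_p(W(\F))=\F\boldsymbol{\lambda}_p$ is the set of constant tuples. Nothing is missing.
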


Thus $\gamma_p(W(\F))$ can be described as the subspace of $B(\F)$ given by the equations $X_0=X_1=\cdots=X_{p-1}$.
Our goal now is to describe all terms of the lower central series of $W(\F)$ via linear homogeneous equations.
As it turns out, these equations will be induced by certain polynomials in $\F[X]$.
To this purpose, we introduce the following notation: to every polynomial $f(X)=f_0+f_1X+\cdots+f_{p-1}X^{p-1}\in \F[X]$ of degree
less than $p$, we associate the homogeneous linear polynomial $f_0X_0+f_1X_1+\cdots+f_{p-1}X_{p-1}$, which we denote by
$\LL(f(X))$.

\begin{theorem}
\label{equations for gammaiW(F)}
Let $\F$ be a finite field of characteristic $p$.
Then for every $2\le i\le p$, the subgroup $\gamma_i(W(\F))$ consists of all solutions $\boldsymbol\lambda\in B(\F)$
of either of the following systems of homogeneous linear equations:
\begin{enumerate}
\item
$\LL((X-1)^{\ell})=0$, for $p-i+1\le \ell \le p-1$.
\item
$\LL(f^{(\ell)}(X))=0$, for $0\le \ell\le i-2$, where $f(X)=1+X+\cdots+X^{p-1}$.
\end{enumerate}
\end{theorem}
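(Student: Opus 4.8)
The plan is to identify $\gamma_i(W(\F))$ with an ideal in the algebra $R=\F[X]/(X^p-1)$ via the isomorphism $\Theta$, and then translate "vanishing on an ideal" into the two stated systems of linear equations. By \cref{LCS of W(F)}(i) we already know $\Theta(\gamma_i(W(\F)))=\mathfrak{m}^{i-1}$, where $\mathfrak{m}=(\overline{X-1})$. So a tuple $\boldsymbol\lambda\in B(\F)$ lies in $\gamma_i(W(\F))$ if and only if $\Theta(\boldsymbol\lambda)\in\mathfrak{m}^{i-1}$. The subspace $\mathfrak{m}^{i-1}$ has $\F$-codimension $i-1$ in $R$ (since $\dim_{\F}\mathfrak{m}^j/\mathfrak{m}^{j+1}=1$ for $0\le j\le p-1$), so in each of the two parts I must produce exactly $i-1$ independent linear conditions and check they cut out precisely $\mathfrak{m}^{i-1}$.

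\textbf{Part (i): the conditions $\LL((X-1)^\ell)=0$ for $p-i+1\le\ell\le p-1$.}
First I would make precise the pairing that turns a polynomial into a linear functional on $B(\F)$: to $g(X)=\sum_r g_r X^r$ I associate the functional $\boldsymbol\lambda\mapsto \LL(g)(\boldsymbol\lambda)=\sum_r g_r\lambda_r$, i.e.\ evaluation of $\LL(g)$ at the coordinates of $\boldsymbol\lambda$. The natural bilinear form here is $\langle g,\boldsymbol\lambda\rangle=\sum_r g_r\lambda_r$, where $\boldsymbol\lambda$ is read through $\Theta$ as $\sum_r\lambda_r\overline X^r$. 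The key algebraic fact I would use is that this form identifies the annihilator of an ideal $I\subseteq R$ with a complementary ideal: concretely, $\langle g,\boldsymbol\lambda\rangle=0$ for all $\boldsymbol\lambda\in\mathfrak{m}^{i-1}$ precisely when $g$ lies in the annihilator of $\mathfrak{m}^{i-1}$ under this pairing. Since $R$ is a local Gorenstein (in fact a truncated polynomial) algebra, the annihilator of $\mathfrak{m}^{i-1}$ is $\mathfrak{m}^{p-i+1}$, which is spanned by the images of $(X-1)^\ell$ for $p-i+1\le\ell\le p-1$ (there are exactly $i-1$ of these, matching the codimension). Thus $\boldsymbol\lambda\in\mathfrak{m}^{i-1}$ iff $\langle (X-1)^\ell,\boldsymbol\lambda\rangle=0$ for all such $\ell$, which is exactly system (i). The cleanest way to verify the annihilator computation is to note that $\langle (X-1)^\ell,\boldsymbol\mu\rangle$, as $\boldsymbol\mu$ ranges over $\mathfrak{m}^{i-1}$, measures a coefficient that vanishes precisely when the degree bound forces it; I would check this by expanding $(\overline{X-1})^{\ell}\cdot(\overline{X-1})^{i-1}$ and observing it is a combination of $(\overline{X-1})^{j}$ with $j\ge \ell+i-1\ge p$, hence zero in $R$.

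\textbf{Part (ii): the conditions $\LL(f^{(\ell)}(X))=0$ for $0\le\ell\le i-2$.}
Here $f(X)=1+X+\cdots+X^{p-1}$ is (up to sign) the image of $\gamma_p(W(\F))$ under the pairing: by \cref{gammapW(F)}, $\gamma_p$ consists of constant tuples, and $\LL(f)$ is precisely the functional whose vanishing is the "sum of coordinates is zero" condition dual to constancy. The natural idea is that the derivatives $f^{(\ell)}$ generate, as $\ell$ grows, the annihilators of the descending chain $\mathfrak m^{i-1}$, because differentiation corresponds to lowering the degree / peeling off factors of $(X-1)$. I would make this precise by relating $f(X)=\frac{X^p-1}{X-1}$ to powers of $(X-1)$: since $X^p-1=(X-1)^p$ in $\F[X]$, we get $f(X)=(X-1)^{p-1}$, and then $f^{(\ell)}(X)=\frac{(p-1)!}{(p-1-\ell)!}(X-1)^{p-1-\ell}$, which is a nonzero scalar multiple of $(X-1)^{p-1-\ell}$ in $\F[X]$ (the scalar $\frac{(p-1)!}{(p-1-\ell)!}$ is a nonzero element of $\F$ for $0\le\ell\le p-2$). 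Hence as $\ell$ runs from $0$ to $i-2$, the polynomials $f^{(\ell)}$ are nonzero multiples of $(X-1)^{p-1},(X-1)^{p-2},\ldots,(X-1)^{p-i+1}$ — exactly the same $i-1$ functionals as in part (i), up to invertible scalars. Therefore systems (i) and (ii) define the same subspace, and by the Part (i) argument this subspace is $\gamma_i(W(\F))$.

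\textbf{Main obstacle.}
The routine content is bookkeeping, but the one genuinely load-bearing step is the duality/annihilator claim in Part (i): that under the coordinate pairing the annihilator of $\mathfrak{m}^{i-1}$ is exactly $\mathfrak{m}^{p-i+1}$, with no extra vanishing and no deficiency. I expect this to be the crux, and I would nail it either by the Gorenstein/socle structure of $R=\F[X]/(X-1)^p$ or, more elementarily, by a direct dimension count combined with the explicit product computation $(X-1)^{\ell}(X-1)^{i-1}=(X-1)^{\ell+i-1}$ showing the listed functionals annihilate $\mathfrak m^{i-1}$; independence of the $i-1$ functionals then follows since the $(X-1)^\ell$ have distinct degrees. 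Once the annihilator is pinned down in Part (i), Part (ii) reduces to the clean identity $f(X)=(X-1)^{p-1}$ and its derivatives, which I consider the easy half.
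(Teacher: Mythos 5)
Your proposal is correct in substance and follows the same overall duality strategy as the paper: both arguments use that the standard coordinate form on $B(\F)$ is non-degenerate, so that $\gamma_i(W(\F))$ is cut out by a basis of $\gamma_i(W(\F))^{\perp}$, and both identify $\gamma_i(W(\F))^{\perp}$ with $\Theta^{-1}(\mathfrak{m}^{p-i+1})$, whose basis $\{(\overline{X-1})^{\ell}\}_{p-i+1\le\ell\le p-1}$ gives system (i); part (ii) is then the identical observation that $f(X)=(X-1)^{p-1}$ and that $f^{(\ell)}$ is a nonzero scalar multiple of $(X-1)^{p-1-\ell}$. Where you genuinely differ is in how the perpendicular space is pinned down. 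The paper avoids any computation with the pairing: it notes that $\gamma_i(W(\F))^{\perp}$ is $\sigma$-invariant of dimension $i-1$ and invokes \cref{LCS of W(F)}(ii), which says the \emph{only} $\sigma$-invariant subspaces of $B(\F)$ are the terms $\gamma_j^*(W(\F))$, forcing $\gamma_i(W(\F))^{\perp}=\gamma_{p-i+2}(W(\F))$. You instead compute the annihilator of $\mathfrak{m}^{i-1}$ directly from the ring structure of $R=\F[X]/(X^p-1)$ (Gorenstein/socle duality, or the product $(X-1)^{\ell}(X-1)^{i-1}=0$ in $R$ for $\ell\ge p-i+1$) together with a dimension count and the linear independence of the $(X-1)^{\ell}$. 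Both routes work; the paper's reuses machinery it has already established, while yours is self-contained at the level of the algebra $R$. One point in your version needs care: the coordinate pairing $\sum_r g_r\lambda_r$ equals the constant coefficient of $g(X)\,\Theta(\boldsymbol\lambda)(X^{-1})$ in $R$, \emph{not} of $g(X)\,\Theta(\boldsymbol\lambda)(X)$, so your verification via the product $(\overline{X-1})^{\ell}\cdot(\overline{X-1})^{i-1}$ is literally computing a different pairing from the one defining the equations. The fix is one line --- since $(X^{-1}-1)=-X^{-1}(X-1)$ is a unit multiple of $(X-1)$, every $\mathfrak{m}^{j}$ is stable under $X\mapsto X^{-1}$, so the two annihilators coincide --- but without it the ``cleanest verification'' as written does not quite prove what is claimed.
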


\begin{proof}
The standard bilinear product
\[
(\lambda_0,\ldots,\lambda_{p-1})\cdot (\mu_0,\ldots,\mu_{p-1})=\lambda_0\mu_0+\cdots+\lambda_{p-1}\mu_{p-1}
\]
on $B(\F)$ is non-degenerate.
Consequently, for every $\F$-subspace $U$ of $B(\F)$ we have $(U^{\perp})^{\perp}=U$, and so a basis of
$U^{\perp}$ provides a system of homogeneous linear equations defining $U$.
We will apply this to $U=\gamma_i(W(\F))$.

By non-degeneracy and (iii) of \cref{LCS of W(F)}, we have
\[
\dim_{\F} \gamma_i(W(\F))^{\perp} = p - \dim_{\F} \gamma_i(W(\F)) = i-1.
\]
Also $\gamma_i(W(\F))^{\perp}$ is $\sigma$-invariant, since $\gamma_i(W(\F))$ is.
By (ii) and (iii) of \cref{LCS of W(F)}, it follows that $\gamma_i(W(\F))^{\perp}=\gamma_{p-i+2}(W(\F))$.
Now (i) of the same theorem yields $\Theta(\gamma_i(W(\F))^{\perp})=\mathfrak{m}^{p-i+1}$.
Since
\[
\{ (\overline{X-1})^{\ell} \mid p-i+1\le \ell\le p-1 \}
\]
is an $\F$-basis of $\mathfrak{m}^{p-i+1}$,
it follows that $\gamma_i(W(\F))$ can be defined by the equations in (i).

As for the equations in (ii), observe that
\begin{equation}
\label{X-1 to p-1}
f(X) = \frac{X^p-1}{X-1} = \frac{(X-1)^p}{X-1} = (X-1)^{p-1}
\end{equation}
in $\F[X]$.
Consequently the derivatives $f^{(\ell)}(X)=(p-1)\ldots (p-\ell)(X-1)^{p-\ell-1}$ for $0\le \ell\le i-2$
also provide an $\F$-basis of $\mathfrak{m}^{p-i+1}$.
\end{proof}

As a consequence, we can easily determine whether a tuple $\boldsymbol\lambda$ belongs to $\gamma_2(W(\F))$
or $\gamma_3(W(\F))$.
These cases will be particularly useful in the following sections.

\begin{corollary}
\label{equations for gamma2W(F) and gamma3W(F)}
Let $\F$ be a finite field of characteristic $p$.
Then, for a given tuple $\boldsymbol{\lambda}=(\lambda_0,\ldots,\lambda_{p-1})\in B(\F)$, we have
\begin{equation}
\label{equation for gamma2W(F)}
\boldsymbol{\lambda} \in \gamma_2(W(\F))
\ \
\Longleftrightarrow
\ \
\lambda_0+\cdots+\lambda_{p-1}=0,
\end{equation}
and
\begin{equation}
\label{equation for gamma3W(F)}
\boldsymbol{\lambda} \in \gamma_3(W(\F))
\ \
\Longleftrightarrow
\ \
\lambda_0+\cdots+\lambda_{p-1}=0,
\ \
\lambda_0+2\lambda_1+\cdots+(p-1)\lambda_{p-2}=0.
\end{equation}
\end{corollary}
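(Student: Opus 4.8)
The plan is to obtain both equivalences directly from \cref{equations for gammaiW(F)}, specialising to $i=2$ and $i=3$ and using the description in part~(ii) of that theorem. Recall that part~(ii) defines $\gamma_i(W(\F))$ as the solution set of $\LL(f^{(\ell)}(X))=0$ for $0\le \ell\le i-2$, where $f(X)=1+X+\cdots+X^{p-1}$. The only work remaining is to translate these linear polynomials into explicit equations in the coordinates $\lambda_0,\ldots,\lambda_{p-1}$, which amounts to differentiating $f(X)$ and reading off the coefficient of each power $X^j$, since $\LL$ associates the variable $X_j$ (hence the coordinate $\lambda_j$) to that coefficient.

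For the first equivalence I would take $i=2$, so that part~(ii) yields a single equation, the one for $\ell=0$, namely $\LL(f(X))=0$. Every coefficient of $f(X)$ equals $1$, so the associated homogeneous linear polynomial is $X_0+X_1+\cdots+X_{p-1}$, and evaluating at $\boldsymbol{\lambda}$ gives exactly $\lambda_0+\cdots+\lambda_{p-1}=0$. This is \eqref{equation for gamma2W(F)}.

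For the second equivalence I would take $i=3$, so that part~(ii) now produces the two equations for $\ell=0$ and $\ell=1$. The equation for $\ell=0$ is the coordinate-sum equation already found. For $\ell=1$ I compute $f'(X)=\sum_{j=0}^{p-2}(j+1)X^j$, whose associated linear polynomial is $X_0+2X_1+\cdots+(p-1)X_{p-2}$; evaluating at $\boldsymbol{\lambda}$ gives the second equation $\lambda_0+2\lambda_1+\cdots+(p-1)\lambda_{p-2}=0$. Together these two equations are precisely \eqref{equation for gamma3W(F)}.

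There is essentially no obstacle here, as the statement is a pure specialisation of \cref{equations for gammaiW(F)}; the only point demanding care is the index bookkeeping when differentiating $f(X)$, so that the coefficient of $X^j$ in $f^{(\ell)}(X)$ is correctly paired with $\lambda_j$. As a sanity check on the case $i=2$, one can also argue intrinsically without the full theorem: by \eqref{comm H and W(F)} we have $\gamma_2(W(\F))=\Delta(B(\F))$, and every element $\Delta(\boldsymbol{\mu})=\boldsymbol{\mu}^{\sigma}-\boldsymbol{\mu}$ visibly has coordinate sum $0$ because $\sigma$ merely permutes the entries; since $\ker\Delta$ consists of the constant tuples and thus has dimension $1$, the image $\gamma_2(W(\F))$ has dimension $p-1$ and therefore coincides with the hyperplane $\sum_i \lambda_i=0$.
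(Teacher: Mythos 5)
Your proposal is correct and matches the paper's (implicit) argument exactly: the corollary is stated as an immediate specialisation of \cref{equations for gammaiW(F)}(ii) to $i=2$ and $i=3$, which is precisely the computation you carry out. The additional intrinsic check via $\Delta$ for the case $i=2$ is sound but not needed.
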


For later use, it is convenient to translate the information obtained in the previous theorems from the
special case of a finite field to the general case of an arbitrary elementary abelian
finite $p$-group $A$, written multiplicatively.
Assume $A$ is of order $p^r$ and let $\rho:(A,\cdot)\rightarrow  (\F,+)$ be a group isomorphism, where $\F$
is a field with $p^r$ elements.
Then $\rho$ obviously induces an isomorphism $\tilde\rho$ between $W(A)$ and $W(\F)$, which is defined
componentwise on $B(A)$ by using $\rho$ and is the identity on $\langle \sigma \rangle$.
As a consequence,
\begin{equation}
\label{rho and delta commute}
\tilde\rho(\Delta(a_0,\ldots,a_{p-1})) = \Delta(\tilde\rho(a_0,\ldots,a_{p-1}))
\end{equation}
for all $a_0,\ldots,a_{p-1}\in A$.
Since $\tilde\rho$ is an isomorphism, the results about the nilpotency class, and the orders and indices of
terms of the lower central series in \cref{LCS of W(F)} remain exactly the same, replacing $\F$ with $A$.
Also
\begin{equation}
\label{gammaiW(A) and Delta}
\gamma_i^*(W(A)) = \Delta^{i-1} (B(A)),
\quad
\text{for every $i\ge 1$,}
\end{equation}
and \cref{gammapW(F)} translates into
\begin{equation}
\label{gammapW(A)}
\gamma_p(W(A)) = \{ (a,\overset{p}{\ldots},a) \mid a\in A \}.
\end{equation}

Let us see how we can rewrite the result regarding generators of terms of the lower central series.
Consider a vector $\boldsymbol{\lambda}=(\lambda_0,\ldots,\lambda_{p-1}) \in B(\F_p)$ (note that
$\F_p$ is the prime subfield of $\F$).
Now since $\rho$ is bijective, an arbitrary element $\mu\in \F$ can be written in the form $\rho(a)$ for a unique
$a\in A$, and then the scalar product $\mu \boldsymbol{\lambda}$ is equal to
\begin{equation}
\label{rho(a)lambda}
\begin{split}
\rho(a) \boldsymbol\lambda
&=
( \lambda_0 \rho(a), \ldots, \lambda_{p-1} \rho(a) )
\\
&=
(\rho(a^{\lambda_0}), \ldots, \rho(a^{\lambda_{p-1}}) )
\\
&=
\tilde\rho \big( a^{\lambda_0}, \ldots, a^{\lambda_{p-1}} \big),
\end{split}
\end{equation}
and as a consequence
\begin{equation}
\label{tilde rho inverse}
\tilde{\rho}^{-1}(\F \boldsymbol{\lambda}) = \{ ( a^{\lambda_0}, \ldots, a^{\lambda_{p-1}} ) \mid a\in A \}.
\end{equation}
From this remark and \cref{gens for gammaiW(F)} we can easily obtain the following theorem about the lower
central series of $W(A)$.
Before proceeding, we need to introduce some general notation: if $G$ is a group and $g\in G$, we set
\[
\boldsymbol{\lambda}_i(g) = \Delta^{i-1}(g,1,\overset{p-1}{\ldots},1) \in B(G).
\]
Then one can easily adapt the proof of \cref{gens for gammaiW(F)} to show the following lemma.

\begin{lemma}
\label{lambdai(g)}
For every group $G$ and every $g\in G$, we have
\[
\boldsymbol{\lambda}_i(g)
=
(g^{\lambda_{i,0}},\ldots,g^{\lambda_{i,i-1}},1,\overset{p-i}{\ldots},1),
\]
where the elements $\lambda_{i,r}$ are as in \cref{gens for gammaiW(F)}.
\end{lemma}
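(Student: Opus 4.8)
The plan is to reduce the computation for an arbitrary group $G$ to the field computation already carried out in \cref{gens for gammaiW(F)}, by exploiting the fact that $\Delta$ behaves additively on exponents whenever all components of a tuple are powers of a single element. First I would record the effect of $\Delta$ on a tuple of powers of $g$: writing $\mathbf{g}=(g^{c_0},\ldots,g^{c_{p-1}})\in B(G)$, the formula $\Delta(\mathbf{g})=[\mathbf{g},\sigma]=\mathbf{g}^{-1}\mathbf{g}^{\sigma}$ gives as its $j$th component $g^{-c_j}g^{c_{j-1}}=g^{c_{j-1}-c_j}$ (indices modulo $p$), because powers of the fixed element $g$ commute with one another. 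Two things are then immediate: the set of tuples all of whose components are powers of $g$ is closed under $\Delta$, and on the exponent vectors $\Delta$ acts exactly as the additive map $\boldsymbol{\lambda}\mapsto\boldsymbol{\lambda}^{\sigma}-\boldsymbol{\lambda}$ that $\Delta$ induces on $B(\F)$ in the field case.

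The point I want to stress is that the non-commutativity of $G$ never intervenes: each component of $\Delta(\mathbf{g})$ is a product of two components of $\mathbf{g}$ sitting in adjacent positions, and when these are all powers of $g$ the product is again a power of $g$ whose exponent is the difference of the two original exponents. Consequently, iterating the map stays within this subset, and the exponent of the $j$th component of $\Delta^{i-1}(\mathbf{g})$ is obtained from the initial exponent vector by precisely the same linear recurrence as in $B(\F)$. I would then apply this with the starting tuple $(g,1,\overset{p-1}{\ldots},1)$, whose exponent vector $(1,0,\overset{p-1}{\ldots},0)$ is exactly the starting vector of \cref{gens for gammaiW(F)}. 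By part (i) of that theorem the exponent vector of $\Delta^{i-1}(1,0,\ldots,0)$ is $(\lambda_{i,0},\ldots,\lambda_{i,i-1},0,\overset{p-i}{\ldots},0)$ with $\lambda_{i,r}=(-1)^{i-r-1}\binom{i-1}{r}$; transporting this back through the exponent correspondence yields
\[
\boldsymbol{\lambda}_i(g)=\Delta^{i-1}(g,1,\overset{p-1}{\ldots},1)=(g^{\lambda_{i,0}},\ldots,g^{\lambda_{i,i-1}},1,\overset{p-i}{\ldots},1),
\]
which is the claim. Here the $\lambda_{i,r}$ are the genuine integer binomial coefficients, so that $g^{\lambda_{i,r}}$ is unambiguous as an integer power of $g$.

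I do not expect any genuine obstacle here: the only thing one must be careful about is the bookkeeping of indices and signs in the formula for $\Delta$ on a tuple of powers of $g$, together with the verification that this agrees with the additive formula $\boldsymbol{\lambda}\mapsto\boldsymbol{\lambda}^{\sigma}-\boldsymbol{\lambda}$ used in the field case. Once these match, the lemma is a direct transcription of \cref{gens for gammaiW(F)}(i) from $\F$ to the exponents of powers of $g$, exactly as anticipated by the remark preceding the statement.
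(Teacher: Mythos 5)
Your proof is correct and is precisely the adaptation the paper has in mind (the paper gives no written proof, only the remark that one "easily adapts" the proof of \cref{gens for gammaiW(F)}): you observe that $\Delta$ preserves the set of tuples whose entries are powers of the fixed element $g$ and acts on the integer exponent vectors by the same linear map $\boldsymbol{\lambda}\mapsto\boldsymbol{\lambda}^{\sigma}-\boldsymbol{\lambda}$ as in the field case, so the result follows from the computation of $(X-1)^{i-1}$. Your remark that for $i\le p$ the exponents $\lambda_{i,r}$ can be taken as genuine integers (no reduction modulo $X^p-1$ or modulo $p$ intervenes) is exactly the point needed to make the statement meaningful for an arbitrary group $G$.
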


\begin{theorem}
\label{gens for gammaiW(A)}
Let $A$ be an elementary abelian finite $p$-group, and let $1\le i\le p$.
Then the following hold:
\begin{enumerate}
\item
The images of the tuples $\boldsymbol\lambda_i(a)$, with $a\in A$, yield all
elements of the quotient $\gamma_i^*(W(A))/\gamma_{i+1}^*(W(A))$, without repetitions.
Furthermore, the map
\[
\begin{matrix}
A & \longrightarrow & \gamma_i^*(W(A))/\gamma_{i+1}^*(W(A))
\\[5pt]
a & \longmapsto & \boldsymbol\lambda_i(a)
\end{matrix}
\]
is a group isomorphism.
\item
The elements of $\gamma_i^*(W(A))$ can be written in the form
\[
\boldsymbol\lambda_i(a_i) \cdots \boldsymbol\lambda_p(a_p),
\]
with $a_i,\ldots,a_p\in A$, without repetitions.
\item
For $1\le i\le p-1$, the map
\[
\begin{matrix}
\gamma_i^*(W(A))/\gamma_{i+1}^*(W(A)) & \longrightarrow & \gamma_{i+1}^*(W(A))/\gamma_{i+2}^*(W(A))
\\[5pt]
\mathbf{a}\gamma_{i+1}^*(W(A)) & \longmapsto & \Delta(\mathbf{a})\gamma_{i+2}^*(W(A))
\end{matrix}
\]
is a group isomorphism.
\end{enumerate}
\end{theorem}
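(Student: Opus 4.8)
The plan is to transport everything from the already-analysed group $W(\F)$ to $W(A)$ by means of the isomorphism $\tilde\rho\colon W(A)\to W(\F)$. The single computation on which the whole argument rests is the identity
\[
\tilde\rho(\boldsymbol{\lambda}_i(a)) = \rho(a)\,\boldsymbol{\lambda}_i,
\]
valid for every $a\in A$ and $1\le i\le p$, where $\boldsymbol{\lambda}_i$ is the field generator of \cref{gens for gammaiW(F)}. This follows at once by combining the explicit form of $\boldsymbol{\lambda}_i(a)$ given in \cref{lambdai(g)} with \eqref{rho(a)lambda}: both sides have first $i$ entries $\rho(a^{\lambda_{i,r}})=\lambda_{i,r}\rho(a)$ and trivial remaining entries. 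Moreover, since $\tilde\rho$ commutes with $\Delta$ by \eqref{rho and delta commute}, it carries $\gamma_j^*(W(A))=\Delta^{j-1}(B(A))$ onto $\gamma_j^*(W(\F))=\Delta^{j-1}(B(\F))$ for every $j$; in particular it induces isomorphisms between the corresponding lower central factors. Thus the three assertions for $W(A)$ become immediate consequences of the corresponding facts for $W(\F)$, once these are read off through $\tilde\rho$ and $\rho$.

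For (i), I would argue as follows. By \cref{LCS of W(F)} the quotient $\gamma_i^*(W(\F))/\gamma_{i+1}^*(W(\F))$ has $\F$-dimension $1$, and by part (ii) of \cref{gens for gammaiW(F)} it is generated by the image of $\boldsymbol{\lambda}_i$. Hence the $\F$-linear map $\mu\mapsto \mu\boldsymbol{\lambda}_i$ induces a bijection of $\F$ onto this quotient, which is in particular an isomorphism of the additive group $(\F,+)$ with it. Precomposing with $\rho\colon A\to\F$ and postcomposing with $\tilde\rho^{-1}$, and invoking the displayed identity, turns this into the assignment $a\mapsto \boldsymbol{\lambda}_i(a)\gamma_{i+1}^*(W(A))$; as a composite of group isomorphisms it is itself a group isomorphism, and bijectivity yields the ``all elements without repetitions'' statement. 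For (ii), part (iii) of \cref{gens for gammaiW(F)} says $\{\boldsymbol{\lambda}_i,\dots,\boldsymbol{\lambda}_p\}$ is an $\F$-basis of $\gamma_i^*(W(\F))$, so every element there is uniquely $\mu_i\boldsymbol{\lambda}_i+\cdots+\mu_p\boldsymbol{\lambda}_p$. Since $B(A)$ is abelian, $\tilde\rho$ converts the product $\boldsymbol{\lambda}_i(a_i)\cdots\boldsymbol{\lambda}_p(a_p)$ into the sum $\rho(a_i)\boldsymbol{\lambda}_i+\cdots+\rho(a_p)\boldsymbol{\lambda}_p$; writing $\mu_j=\rho(a_j)$ and applying $\tilde\rho^{-1}$ transports the unique-representation statement from $W(\F)$ to $W(A)$.

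For (iii), first note that $\gamma_{i+1}^*(W(A))=\Delta(\gamma_i^*(W(A)))$ by \eqref{gammaiW(A) and Delta}, and that $\Delta$ is a group homomorphism because $B(A)$ is abelian. Thus $\Delta$ restricts to a surjective homomorphism $\gamma_i^*(W(A))\to\gamma_{i+1}^*(W(A))$ sending $\gamma_{i+1}^*(W(A))$ into $\Delta(\gamma_{i+1}^*(W(A)))=\gamma_{i+2}^*(W(A))$; hence it induces a well-defined surjective homomorphism $\gamma_i^*/\gamma_{i+1}^*\to\gamma_{i+1}^*/\gamma_{i+2}^*$, which is exactly the map in the statement. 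Finally, transporting part (iii) of \cref{LCS of W(F)} shows that both quotients have order $|A|$ whenever $1\le i\le p-1$, so a surjective homomorphism between them is bijective, i.e. an isomorphism. Alternatively, one reads this directly in $\F[X]/(X^p-1)$ as multiplication by $\overline{X-1}$ giving an isomorphism $\mathfrak{m}^{i-1}/\mathfrak{m}^i\to\mathfrak{m}^i/\mathfrak{m}^{i+1}$ of one-dimensional spaces.

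I do not expect a serious obstacle, since all the content already lives in \cref{gens for gammaiW(F)} and \cref{LCS of W(F)}. The only point requiring care is the bookkeeping behind the displayed identity---matching the group-theoretic exponents $\lambda_{i,r}$ in $\boldsymbol{\lambda}_i(a)$ with the scalars $\lambda_{i,r}$ multiplying $\boldsymbol{\lambda}_i$---together with remembering that the restriction $1\le i\le p-1$ in (iii) is exactly what keeps the target factor $\gamma_{i+1}^*/\gamma_{i+2}^*$ nontrivial.
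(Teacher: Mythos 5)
Your proposal is correct and follows essentially the same route as the paper: items (i) and (ii) are obtained by transporting \cref{gens for gammaiW(F)} through $\tilde\rho$ via the identity $\tilde\rho(\boldsymbol{\lambda}_i(a))=\rho(a)\boldsymbol{\lambda}_i$ (which is exactly \eqref{rho(a)lambda} combined with \cref{lambdai(g)}), and (iii) is the composite of the isomorphisms in (i) for $i$ and $i+1$ — your surjectivity-plus-equal-orders variant is an equivalent packaging of the same fact. You have simply written out in full the details the paper leaves implicit.
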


\begin{proof}
Items (i) and (ii) are an immediate consequence of \eqref{tilde rho inverse} and the
corresponding results in \cref{gens for gammaiW(F)}.
Finally, (iii) follows from the isomorphism in (i), applied to both $i$ and $i+1$.
\end{proof}

Now we rewrite \cref{equations for gammaiW(F)} for an arbitrary multiplicative elementary abelian
finite $p$-group $A$.
To start with, for a given polynomial
\[
f(X)=f_0+f_1X+\cdots+f_{p-1}X^{p-1}\in \F_p[X],
\]
we associate the word $X_0^{f_0} X_1^{f_1} \cdots X_{p-1}^{f_{p-1}}$, which we denote by $\MM(f(X))$.
Since $A$ is elementary abelian, this word induces a well-defined homomorphism from $A$ to $A$ by substitution.
Also we can speak of the word equation $\MM(f(X))=1$ in $A$ and of its set of solutions in $B(A)$.
Now for a tuple $\mathbf{a}=(a_0,\ldots,a_{p-1})\in B(A)$, we have the following equivalent conditions:
\begin{align*}
\tilde{\rho}(\mathbf{a})
\text{ is a solution of $\LL(f(X))=0$}
\
&\Longleftrightarrow
\
\rho(a_0)f_0 + \rho(a_1) f_1 + \cdots + \rho(a_{p-1}) f_{p-1} = 0
\\
&\Longleftrightarrow
\
\rho(a_0^{f_0} a_1^{f_1} \cdots a_{p-1}^{f_{p-1}}) = 0
\\
&\Longleftrightarrow
\
\mathbf{a}
\text{ is a solution of $\MM(f(X))=1$.}
\end{align*}
Since all the polynomials appearing in \cref{equations for gammaiW(F)} have coefficients in $\F_p$,
we immediately get the following result.

\begin{theorem}
\label{equations for gammaiW(A)}
Let $A$ be an elementary abelian finite $p$-group.
Then for every $2\le i\le p$, the subgroup $\gamma_i(W(A))$ consists of all solutions in $B(A)$ of either of the following systems of word equations:
\begin{enumerate}
\item
$\MM((X-1)^{\ell})=1$, for $p-i+1\le \ell \le p-1$.
\item
$\MM(f^{(\ell)}(X))=1$, for $0\le \ell \le i-2$, where $f(X)=1+X+\cdots+X^{p-1}$.
\end{enumerate}
\end{theorem}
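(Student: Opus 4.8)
The plan is to transport the field-case description in \cref{equations for gammaiW(F)} across the isomorphism $\tilde\rho\colon W(A)\to W(\F)$ furnished by a group isomorphism $\rho\colon(A,\cdot)\to(\F,+)$, where $\F$ is a field of order $|A|$. The essential bridge has already been set up in the discussion preceding the statement: for a polynomial $f(X)\in\F_p[X]$ and a tuple $\mathbf{a}\in B(A)$, the tuple $\tilde\rho(\mathbf{a})$ satisfies the linear equation $\LL(f(X))=0$ in $B(\F)$ if and only if $\mathbf{a}$ satisfies the word equation $\MM(f(X))=1$ in $B(A)$. So the only remaining task is to combine this equivalence with \cref{equations for gammaiW(F)}.

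First I would note that, being an isomorphism that restricts to the identity on $\langle\sigma\rangle$ and acts componentwise by $\rho$ on the base group, $\tilde\rho$ maps $\gamma_i(W(A))$ onto $\gamma_i(W(\F))$ for every $i$. By \cref{equations for gammaiW(F)}, a tuple $\boldsymbol\lambda\in B(\F)$ lies in $\gamma_i(W(\F))$ exactly when it solves the system $\LL((X-1)^{\ell})=0$ for $p-i+1\le\ell\le p-1$, equivalently the derivative system $\LL(f^{(\ell)}(X))=0$ for $0\le\ell\le i-2$, where $f(X)=1+X+\cdots+X^{p-1}$. Applying this to $\boldsymbol\lambda=\tilde\rho(\mathbf{a})$ and pulling back along $\tilde\rho$, we obtain that $\mathbf{a}\in\gamma_i(W(A))$ if and only if $\tilde\rho(\mathbf{a})$ solves each of these equations.

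The final step is to verify that every polynomial occurring in \cref{equations for gammaiW(F)} has coefficients in the prime field $\F_p$: the binomial expansion of $(X-1)^{\ell}$ has integer coefficients reduced modulo $p$, and $f(X)$ together with all its derivatives likewise lies in $\F_p[X]$. Consequently the bridging equivalence applies verbatim to each equation, and replacing every occurrence of $\LL(f(X))=0$ by the corresponding $\MM(f(X))=1$ turns the defining systems for $\gamma_i(W(\F))$ into defining systems for $\gamma_i(W(A))$. I expect no genuine obstacle here: the whole content is the $\F_p$-coefficient check, which guarantees that the word equations $\MM(f(X))=1$ are well defined and faithfully encode the linear equations $\LL(f(X))=0$; everything else is a direct pullback along the isomorphism $\tilde\rho$.
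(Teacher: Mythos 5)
Your proposal is correct and follows essentially the same route as the paper: the text preceding the theorem establishes exactly the equivalence between $\tilde\rho(\mathbf{a})$ solving $\LL(f(X))=0$ and $\mathbf{a}$ solving $\MM(f(X))=1$, and then deduces the result by transporting \cref{equations for gammaiW(F)} along the isomorphism $\tilde\rho$, using that all the relevant polynomials have coefficients in $\F_p$. Your explicit remark that $\tilde\rho$ carries $\gamma_i(W(A))$ onto $\gamma_i(W(\F))$ is a point the paper leaves implicit, but it is the same argument.
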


Finally, we record the result corresponding to \cref{equations for gamma2W(F) and gamma3W(F)}.

\begin{corollary}
\label{equations for gamma2W(A) and gamma3W(A)}
Let $A$ be an elementary abelian finite $p$-group.
Then, for a given $\mathbf{a}=(a_0,\ldots,a_{p-1})\in B(A)$, we have
\begin{equation}
\label{equation for gamma2W(A)}
\mathbf{a} \in \gamma_2(W(A))
\ \
\Longleftrightarrow
\ \
a_0 \cdots a_{p-1}=1,
\end{equation}
and
\begin{equation}
\label{equation for gamma3W(A)}
\mathbf{a} \in \gamma_3(W(A))
\ \
\Longleftrightarrow
\ \
a_0 \cdots a_{p-1}=1,
\ \
a_0 a_1^2 \cdots a_{p-2}^{p-1}=1.
\end{equation}
\end{corollary}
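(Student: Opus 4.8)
The plan is to deduce the statement directly from \cref{equations for gammaiW(A)} by specialising the two systems of word equations to $i=2$ and $i=3$; no genuinely new argument is required. The underlying mechanism is the correspondence established just before that theorem, namely that a tuple $\mathbf{a}=(a_0,\ldots,a_{p-1})\in B(A)$ solves the word equation $\MM(f(X))=1$ if and only if $\tilde\rho(\mathbf{a})$ solves the linear equation $\LL(f(X))=0$ over $\F$. Thus I would simply read off which polynomials $f$ occur in \cref{equations for gammaiW(A)} for these two small values of $i$ and translate each $\MM(f(X))=1$ into the corresponding product relation among the $a_i$.

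For $i=2$, version (ii) of \cref{equations for gammaiW(A)} involves only $\ell=0$, so $f(X)=1+X+\cdots+X^{p-1}$ and the associated word is $\MM(f(X))=X_0X_1\cdots X_{p-1}$. Substituting $\mathbf{a}$ yields the single relation $a_0\cdots a_{p-1}=1$, which is \eqref{equation for gamma2W(A)}. As a consistency check, version (i) for $i=2$ uses the polynomial $(X-1)^{p-1}$, and by \eqref{X-1 to p-1} this equals $f(X)$ in $\F_p[X]$, so the two descriptions give the same equation. For $i=3$, version (ii) now ranges over $\ell\in\{0,1\}$. The case $\ell=0$ reproduces $a_0\cdots a_{p-1}=1$ exactly as above, while for $\ell=1$ I would differentiate $f(X)$ to obtain $f'(X)=1+2X+3X^2+\cdots+(p-1)X^{p-2}$, so that the coefficient of $X^k$ is $k+1$ for $0\le k\le p-2$. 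The corresponding word is therefore $\MM(f'(X))=X_0X_1^2X_2^3\cdots X_{p-2}^{p-1}$, and substituting $\mathbf{a}$ gives the second relation $a_0a_1^2\cdots a_{p-2}^{p-1}=1$ of \eqref{equation for gamma3W(A)}. Together these two relations cut out $\gamma_3(W(A))$.

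There is essentially no obstacle here; the only point demanding a little care is to index the coefficients of $f'(X)$ correctly, since the exponent attached to $X_k$ is $k+1$ rather than $k$. One also notes that all the coefficients lie in $\F_p$, so the word equations define genuine homomorphisms of the elementary abelian group $A$ and the translation from the field case is legitimate. With these observations the corollary follows immediately from \cref{equations for gammaiW(A)}.
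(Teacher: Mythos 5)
Your proposal is correct and follows exactly the route the paper intends: the corollary is recorded as an immediate specialisation of \cref{equations for gammaiW(A)} (system (ii)) to $i=2$ and $i=3$, with $f(X)=1+X+\cdots+X^{p-1}$ giving $a_0\cdots a_{p-1}=1$ and $f'(X)=1+2X+\cdots+(p-1)X^{p-2}$ giving $a_0a_1^2\cdots a_{p-2}^{p-1}=1$. Your indexing of the coefficients of $f'(X)$ and the consistency check against $(X-1)^{p-1}$ are both accurate.
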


After having completed a thorough analysis of the lower central series of $W(A)$, where $A$ is an elementary
abelian finite $p$-group, now we proceed to the general case of $W(G)$, where $G$ is a finite $p$-group all of
whose lower central factors are elementary abelian.
Before describing the lower central series of $W(G)$, we need a lemma.

\begin{lemma}
\label{comm with W(G)}
Let $G$ be a finite $p$-group and let $T$ be a $\sigma$-invariant subgroup of $B(\gamma_j(G))$ for some $j\ge 1$.
Then
\begin{equation}
\label{comm T with W(G)}
[T,W(G)] \equiv [T,\langle \sigma \rangle] \equiv \Delta(T) \mod B(\gamma_{j+1}(G)).
\end{equation}
In particular, if $J\le \gamma_j(G)$ then
\begin{equation}
\label{comm Deltak-1J with W(G)}
[\Delta^{k-1}(B(J)),W(G)] \equiv [\Delta^{k-1}(B(J)),\langle \sigma \rangle]
\equiv \Delta^k(B(J)) \mod B(\gamma_{j+1}(G))
\end{equation}
for every $k\ge 1$.
\end{lemma}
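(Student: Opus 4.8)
The plan is to establish the displayed congruence \eqref{comm T with W(G)} first, and then derive the iterated version \eqref{comm Deltak-1J with W(G)} by induction on $k$. Throughout, the guiding principle is that everything is measured modulo $B(\gamma_{j+1}(G))$, which is normal in $W(G)$ because $\gamma_{j+1}(G)$ is characteristic in $G$ and $B(\gamma_{j+1}(G))$ is $\sigma$-invariant. The key structural fact is that in the quotient the relevant section $B(\gamma_j(G))/B(\gamma_{j+1}(G)) \cong B(\gamma_j(G)/\gamma_{j+1}(G))$ is abelian, which is what lets the (a priori non-homomorphic) map $\Delta$ behave well.

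First I would reduce $[T,W(G)]$ to a commutator with $\langle\sigma\rangle$. Since $W(G)=B(G)\langle\sigma\rangle$, the subgroup $[T,W(G)]$ is generated by the commutators $[t,\mathbf{b}\sigma^k]$ with $t\in T$, $\mathbf{b}\in B(G)$ and $0\le k<p$. Expanding via the identity $[t,\mathbf{b}\sigma^k]=[t,\sigma^k]\,[t,\mathbf{b}]^{\sigma^k}$, I observe that $[t,\mathbf{b}]$ lies in $[B(\gamma_j(G)),B(G)]$, which, being computed componentwise in the direct product $B(G)=G\times\cdots\times G$, equals $[\gamma_j(G),G]\times\cdots\times[\gamma_j(G),G]=B(\gamma_{j+1}(G))$. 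As $B(\gamma_{j+1}(G))$ is $\sigma$-invariant, also $[t,\mathbf{b}]^{\sigma^k}\in B(\gamma_{j+1}(G))$, while $[t,\sigma^k]\in[T,\langle\sigma\rangle]$. Hence every generator of $[T,W(G)]$ lies in the subgroup $[T,\langle\sigma\rangle]\,B(\gamma_{j+1}(G))$; together with the trivial reverse inclusion $[T,\langle\sigma\rangle]\le[T,W(G)]$ this gives the first congruence $[T,W(G)]\equiv[T,\langle\sigma\rangle]\mod B(\gamma_{j+1}(G))$. For the second congruence I would invoke the $\sigma$-invariance of $T$: by \eqref{comm H with <g>}, $[T,\langle\sigma\rangle]=[T,\sigma]$, the subgroup generated by the elements $\Delta(t)=[t,\sigma]$ with $t\in T$. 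Reducing modulo $B(\gamma_{j+1}(G))$, the induced map $\overline\Delta$ on the abelian group $B(\gamma_j(G)/\gamma_{j+1}(G))$ is a genuine homomorphism (a short direct check gives $\overline{\Delta(t_1t_2)}=\overline{\Delta(t_1)}\,\overline{\Delta(t_2)}$, using that factors may be reordered in the abelian quotient), so the image of the set $\Delta(T)$ is already the subgroup $\overline\Delta(\overline T)$. Thus $[T,\sigma]$ and $\Delta(T)$ have the same image, establishing $[T,\langle\sigma\rangle]\equiv\Delta(T)\mod B(\gamma_{j+1}(G))$.

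Finally, for the ``in particular'' statement I would argue by induction on $k$. The case $k=1$ is exactly \eqref{comm T with W(G)} applied to the $\sigma$-invariant subgroup $T=B(J)\le B(\gamma_j(G))$. For the inductive step, the point is that $\Delta^{k-1}(B(J))$ determines, modulo $B(\gamma_{j+1}(G))$, the $\sigma$-invariant subgroup $\overline\Delta^{\,k-1}(\overline{B(J)})$ of $B(\gamma_j(G)/\gamma_{j+1}(G))$ (here I again use that $\overline\Delta$ is a homomorphism, so iterated images are subgroups); applying \eqref{comm T with W(G)} to a $\sigma$-invariant lift $T$ of this subgroup yields $[\Delta^{k-1}(B(J)),W(G)]\equiv\Delta\big(\Delta^{k-1}(B(J))\big)=\Delta^{k}(B(J))\mod B(\gamma_{j+1}(G))$, where I also use that enlarging or shrinking the first argument by a subset of $B(\gamma_{j+1}(G))$ does not change the commutator with $W(G)$ modulo $B(\gamma_{j+1}(G))$, since $[B(\gamma_{j+1}(G)),W(G)]\le B(\gamma_{j+1}(G))$. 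The main obstacle throughout is precisely this bookkeeping around $\Delta$: because $G$ (and hence $J$) need not be abelian, $\Delta$ is not a homomorphism on $B(G)$ and the iterates $\Delta^{k}(B(J))$ are not subgroups on the nose, so one must consistently pass to the quotient by $B(\gamma_{j+1}(G))$ — where the lower central factor is abelian — both to make sense of the statements and to close the induction.
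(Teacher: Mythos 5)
Your proposal is correct and follows essentially the same route as the paper's proof: the paper simply passes to the quotient at the outset (``we may assume $B(\gamma_{j+1}(G))=1$'', so that $B(G)$ centralizes $T$ and $T$ is abelian), whereas you carry the congruences modulo $B(\gamma_{j+1}(G))$ explicitly; the use of \eqref{comm H with <g>}, the observation that $\Delta$ becomes a homomorphism on the abelian section, and the induction on $k$ all match. No gaps.
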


\begin{proof}
We may assume that $B(\gamma_{j+1}(G))=1$.
Since $T\le B(\gamma_j(G))$, it follows that $B(G)$ centralizes $T$ and so
$[T,W(G)]=[T,\langle \sigma \rangle]$.
On the other hand, since $T$ is $\sigma$-invariant, we have $[T,\langle \sigma \rangle]=[T,\sigma]$
by \eqref{comm H with <g>}.
Also, since $T$ is abelian, the restriction of $\Delta$ to $T$ is a group homomorphism and
consequently $\Delta(T)=\{[\mathbf{g},\sigma] \mid \mathbf{g}\in T\}$ is a subgroup of $W(G)$.
Thus $[T,\sigma]=\Delta(T)$, which proves \eqref{comm T with W(G)}.

Now \eqref{comm Deltak-1J with W(G)} readily follows by induction on $k$, by observing that
the subgroup $B(J)$ is $\sigma$-invariant.
\end{proof}

\begin{theorem}
\label{LCS of W(G)}
Let $G$ be a finite $p$-group of nilpotency class $c$ whose abelianisation is elementary abelian, and let $i\ge 1$.
If we write $i=(j-1)p+k$ with $j\ge 1$ and $1\le k\le p$, then we have
\begin{equation}
\label{gamma_i W(G)}
\gamma_i^* \big( W(G) \big)
=
\Delta^{k-1} \big( B(\gamma_j(G)) \big) \cdot B(\gamma_{j+1}(G)).
\end{equation}
In particular,
\begin{equation}
\label{gamma_jp+1 W(G)}
\gamma_{jp+1}^* \big( W(G) \big)
=
B(\gamma_{j+1}(G))
\end{equation}
for every $j\ge 0$, and the nilpotency class of $W(G)$ is $cp$.
\end{theorem}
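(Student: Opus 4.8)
The plan is to prove the displayed formula \eqref{gamma_i W(G)} by an induction organised around the ``anchor'' terms $\gamma_{jp+1}^*(W(G))$, which should turn out to equal $B(\gamma_{j+1}(G))$. Throughout I abbreviate $N_j=B(\gamma_j(G))$, so that $N_1\ge N_2\ge\cdots\ge N_{c+1}=1$, and I use freely that $[N_j,B(G)]=B([\gamma_j(G),G])=N_{j+1}$, that $\Delta$ commutes with the $\sigma$-action and carries $N_j$ into itself, and that $\Delta^p(N_j)\le N_{j+1}$ (the last because, modulo $N_{j+1}$, the map $\Delta$ becomes the corresponding map on $B(\gamma_j(G)/\gamma_{j+1}(G))$, an elementary abelian group, where $\Delta^p$ vanishes by \cref{LCS of W(F)}). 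I also record that each candidate subgroup $\Delta^{k-1}(N_j)N_{j+1}$ is normal in $W(G)$: it is $\sigma$-invariant and $[\Delta^{k-1}(N_j)N_{j+1},B(G)]\le[N_j,B(G)]=N_{j+1}$, which also lies inside it. With these facts I first establish the upper bound $\gamma_i^*(W(G))\le \Delta^{k-1}(N_j)N_{j+1}$ by a direct induction on $i$, starting from $\gamma_1^*(W(G))=B(G)=N_1$. Writing $\gamma_{i+1}^*(W(G))=[\gamma_i^*(W(G)),W(G)]$ and inserting the inductive bound, it suffices to commute $M:=\Delta^{k-1}(N_j)N_{j+1}$ with $W(G)$; since the target is normal, expanding $[\mathbf h\mathbf n,w]=[\mathbf h,w]^{\mathbf n}[\mathbf n,w]$ reduces matters to checking that the target contains $[\Delta^{k-1}(N_j),W(G)]$ and $[N_{j+1},W(G)]$, and both follow from \eqref{comm Deltak-1J with W(G)} of \cref{comm with W(G)} together with $\Delta^p(N_j)\le N_{j+1}$ and $[N_{j+1},W(G)]\le N_{j+1}$. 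This propagates across the break between layers too, giving $\gamma_{jp+1}^*(W(G))\le N_{j+1}$ for all $j$.

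The crux is the matching lower bound, and here the anchors carry the argument: I prove $\gamma_{jp+1}^*(W(G))=N_{j+1}$ by induction on $j$, the case $j=0$ being $\gamma_1^*(W(G))=B(G)$. Granting $\gamma_{jp+1}^*(W(G))=N_{j+1}$, the upper bound already proved gives $\gamma_{(j+1)p+1}^*(W(G))\le N_{j+2}$, so only $N_{j+2}\le \gamma_{(j+1)p+1}^*(W(G))$ remains. The key observation is that $\Delta^{p-1}(N_{j+1})$ contains the full diagonal $\{(u,\overset{p}{\ldots},u)\mid u\in\gamma_{j+1}(G)\}$, because $\Delta^{p-1}(u,1,\overset{p-1}{\ldots},1)=(u,\overset{p}{\ldots},u)$ by \cref{lambdai(g)}, using $\lambda_{p,r}=1$ from \eqref{lambdapr mod p}. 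Iterating $\Delta(\,\cdot\,)=[\,\cdot\,,\sigma]$ from the anchor $\gamma_{jp+1}^*(W(G))=N_{j+1}$ yields $\Delta^{p-1}(N_{j+1})\le\gamma_{(j+1)p}^*(W(G))$, so this diagonal sits inside $\gamma_{(j+1)p}^*(W(G))$.

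Now commuting a diagonal element $(u,\ldots,u)$ with a single-coordinate element $(1,\ldots,g,\ldots,1)\in B(G)$ produces the single-coordinate commutator $(1,\ldots,[u,g],\ldots,1)$; as $u\in\gamma_{j+1}(G)$, $g\in G$ and the active coordinate vary, these elements generate $B([\gamma_{j+1}(G),G])=N_{j+2}$. Since all of them lie in $[\gamma_{(j+1)p}^*(W(G)),B(G)]\le\gamma_{(j+1)p+1}^*(W(G))$, we obtain $N_{j+2}\le\gamma_{(j+1)p+1}^*(W(G))$, completing the anchor induction and proving \eqref{gamma_jp+1 W(G)}. I expect this diagonal-commutator step to be the main obstacle: it is exactly the point where one must show that $B(\gamma_{j+1}(G))$ is \emph{not} lost too early in the series, and everything else is bookkeeping around it.

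With all anchors in hand the general lower bound is immediate. For $i=(j-1)p+k$ one has $\Delta^{k-1}(N_j)\le\gamma_i^*(W(G))$ by iterating $[\,\cdot\,,\sigma]$ from $\gamma_{(j-1)p+1}^*(W(G))=N_j$, while $N_{j+1}=\gamma_{jp+1}^*(W(G))\le\gamma_i^*(W(G))$ by monotonicity, since $i\le jp+1$. Combined with the upper bound this gives the equality \eqref{gamma_i W(G)}. Finally, for the nilpotency class, \eqref{gamma_i W(G)} yields $\gamma_{cp}^*(W(G))=\Delta^{p-1}(B(\gamma_c(G)))$, which is non-trivial since it contains the non-trivial diagonal of $\gamma_c(G)$, whereas $\gamma_{cp+1}^*(W(G))=B(\gamma_{c+1}(G))=1$; as $\gamma_i(W(G))=\gamma_i^*(W(G))$ for $i\ge2$, the class of $W(G)$ equals $cp$.
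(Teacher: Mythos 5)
Your proof is correct, and its two pillars are the same as the paper's: \cref{comm with W(G)} to push $\Delta$ one more step within a layer, and the fact that at level $jp$ one reaches (essentially) the diagonal copy of $\gamma_j(G)$, whose commutator with $B(G)$ is $B(\gamma_{j+1}(G))$. The organisation differs only at the delicate passage from $\gamma_{jp}^*$ to $\gamma_{jp+1}^*$: the paper carries a single congruence modulo $B(\gamma_{j+1}(G))$ through the induction and then resolves the resulting equation $\gamma_{jp+1}^*\cdot[B(\gamma_{j+1}(G)),W(G)]=B(\gamma_{j+1}(G))$ by the standard finite-$p$-group argument, whereas you split the statement into an upper bound and a lower bound and obtain $B(\gamma_{j+2}(G))\le\gamma_{(j+1)p+1}^*$ directly, by exhibiting near-diagonal elements inside $\gamma_{(j+1)p}^*$ and commuting them with single-coordinate elements of $B(G)$. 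Your route is a little more explicit and avoids the Nakayama-type step, at the cost of running a separate upper-bound induction; the two arguments are otherwise of equal weight. One imprecision to fix: $\Delta^{p-1}(u,1,\overset{p-1}{\ldots},1)$ equals $(u^{\lambda_{p,0}},\ldots,u^{\lambda_{p,p-1}})$ with $\lambda_{p,r}=(-1)^{p-r-1}\binom{p-1}{r}$, which is congruent to $1$ modulo $p$ but not equal to $1$, so this tuple is constant only modulo $B(\gamma_{j+2}(G))$ (using that the lower central factors have exponent $p$). This does not damage the argument --- the exponents $\lambda_{p,r}$ are units modulo $p$, so the commutators $[u^{\lambda_{p,r}},g]$ still exhaust generators of $[\gamma_{j+1}(G),G]$ as $u$ and $g$ vary, and the resulting single-coordinate elements still generate $B(\gamma_{j+2}(G))$ --- but the equality $\Delta^{p-1}(u,1,\ldots,1)=(u,\overset{p}{\ldots},u)$ as you state it is false in general and should be replaced by the congruence.
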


\begin{proof}
We use induction on $i$.
The basis of the induction is obvious, so let $i\ge 2$.
We first assume that $2\le k\le p$.
(Note that the basis of the induction corresponds to $k=1$, so in the induction step we can deal
with $k=1$ after having proved the result for $2\le k\le p$.)
By the induction hypothesis and \eqref{comm Deltak-1J with W(G)}, we have
\begin{equation}
\label{i general}
\begin{split}
\gamma_i^*(W(G)) \cdot B(\gamma_{j+1}(G))
&=
[ \gamma_{i-1}^*(W(G)), W(G) ]  \cdot B(\gamma_{j+1}(G))
\\
&=
[\Delta^{k-2} \big( B(\gamma_j(G)) \big) \cdot B(\gamma_{j+1}(G)),W(G)] \cdot B(\gamma_{j+1}(G))
\\
&=
\Delta^{k-1} (B(\gamma_j(G))) \cdot B(\gamma_{j+1}(G)).
\end{split}
\end{equation}
This proves in particular, for $k=p$, that
\begin{equation}
\label{i=jp}
\gamma_{jp}^*(W(G)) \cdot B(\gamma_{j+1}(G)) = \Delta^{p-1} (B(\gamma_j(G))) \cdot B(\gamma_{j+1}(G)).
\end{equation}
Now the factor group $\gamma_j(G)/\gamma_{j+1}(G)$ is elementary abelian, and then
\[
\Delta^{p-1} (B(\gamma_j(G)/\gamma_{j+1}(G)))
=
\gamma_p (W(\gamma_j(G)/\gamma_{j+1}(G)))
\]
coincides with the set of constant tuples with entries in $\gamma_j(G)/\gamma_{j+1}(G)$, by \eqref{gammapW(A)}.
Thus we get
\begin{equation}
\label{k=p}
\Delta^{p-1} (B(\gamma_j(G))) \cdot B(\gamma_{j+1}(G)) = C_j \cdot B(\gamma_{j+1}(G)),
\end{equation}
where
\[
C_j = \{ (g,\overset{p}{\ldots},g) \mid g\in\gamma_j(G) \}.
\]
Observe that
\begin{equation}
\label{comm Cj and W(G)}
[C_j,W(G)] = [C_j,B(G)] = B(\gamma_{j+1}(G)),
\end{equation}
where the first equality holds because constant tuples commute with $\sigma$, and the second
because
\[
[(g,\overset{p}{\ldots},g),(h,1,\overset{p-1}{\ldots},1)]
=
([g,h],1,\overset{p-1}{\ldots},1)
\]
for every $g\in\gamma_j(G)$ and every $h\in G$.
Hence taking the commutator with $W(G)$ in \eqref{i=jp} and using \eqref{k=p} and \eqref{comm Cj and W(G)},
we get
\begin{equation}
\label{i=jp+1 partial}
\gamma_{jp+1}^*(W(G)) \cdot [B(\gamma_{j+1}(G)),W(G)] = B(\gamma_{j+1}(G)).
\end{equation}
In other words, the normal subgroup $B(\gamma_{j+1}(G))$ of $W(G)$ coincides with the commutator
$[B(\gamma_{j+1}(G)),W(G)]$ modulo $\gamma_{jp+1}^*(W(G))$.
Since $W(G)/\gamma_{jp+1}^*(W(G))$ is a finite $p$-group, this can only happen if $B(\gamma_{j+1}(G))$
is trivial in this quotient group.
Hence \eqref{i=jp+1 partial} implies that
\begin{equation}
\label{i=jp+1}
\gamma_{jp+1}^*(W(G))
=
B(\gamma_{j+1}(G)).
\end{equation}
By taking this value to \eqref{i general}, we obtain \eqref{gamma_i W(G)} for $k=2,\ldots,p$.

Finally, assume that $k=1$.
Then since $i\ge 2$, we have $j\ge 2$, and the result follows from \eqref{i=jp+1}, applied with $j-1$
in place of $j$.
\end{proof}

\begin{remark}
\label{k=p+1}
Note that \eqref{i=jp+1} can also be written as
\[
\gamma_{jp+1}^*(W(G)) = \Delta^p (B(\gamma_j(G))) \cdot B(\gamma_{j+1}(G)).
\]
In other words, \eqref{gamma_i W(G)} is also valid if $i=(j-1)p+k$ and $k=p+1$.
\end{remark}

Equality \eqref{gamma_i W(G)} above says that for $i=(j-1)p+k$ with $1\le k\le p$, the subgroup $\gamma_i^*(W(G))$
coincides with the lift to $W(G)$ of the subgroup $\gamma_k^*(W(\gamma_j(G)/\gamma_{j+1}(G)))$, taking into
account that $W(\gamma_j(G)/\gamma_{j+1}(G))$ is naturally isomorphic to the quotient
$W(\gamma_j(G))/B(\gamma_{j+1}(G))$.
By \cref{k=p+1}, this also holds for $k=p+1$.
Then the following corollary is an immediate consequence of the results that we have proved for $W(A)$, where
$A$ is elementary abelian.

\begin{corollary}
\label{LCS of W(G) detail}
Let $G$ be a finite $p$-group of nilpotency class $c$ whose abelianisation is elementary abelian, and let
$1\le i\le cp$.
If we write $i=(j-1)p+k$ with $1\le j\le c$ and $1\le k\le p$, then the following hold:
\vspace{5pt}
\begin{enumerate}
\setlength\itemsep{10pt}
\item
The subgroup $\gamma_i^*(W(G))$ consists of all tuples in $B(\gamma_j(G))$ that are solutions of either of the
following sets of equations:
\vspace{3pt}
\begin{enumerate}
\setlength\itemsep{5pt}
\item[(a)]
$\MM((X-1)^{\ell})\equiv 1 \mod \gamma_{j+1}(G)$, for $p-k+1\le \ell \le p-1$.
\item[(b)]
$\MM(f^{(\ell)}(X))\equiv 1 \mod \gamma_{j+1}(G)$, for $0\le \ell \le k-2$, where $f(X)=1+X+\cdots+X^{p-1}$.
\end{enumerate}
\vspace{3pt}
In particular, if $\mathbf{g}=(g_0,\ldots,g_{p-1})\in B(\gamma_j(G))$ then
\[
\mathbf{g} \in \gamma_{(j-1)p+2}(W(G))
\
\Longleftrightarrow
\
g_0 \cdots g_{p-1} \in \gamma_{j+1}(G),
\]
and
\[
\mathbf{g} \in \gamma_{(j-1)p+3}(W(G))
\
\Longleftrightarrow
\
g_0 \cdots g_{p-1} \in \gamma_{j+1}(G), \ g_0 g_1^2 \cdots g_{p-2}^{p-1} \in \gamma_{j+1}(G).
\]
\item
$|\gamma_i^*(W(G)):\gamma_{i+1}^*(W(G))|=|\gamma_j(G):\gamma_{j+1}(G)|$.
\item
The map
\[
\begin{matrix}
\gamma_j(G)/\gamma_{j+1}(G) & \longrightarrow & \gamma_i^*(W(G))/\gamma_{i+1}^*(W(G))
\\[5pt]
g\gamma_{j+1}(G) & \longmapsto
&  \boldsymbol{\lambda}_k(g) \, \gamma_{i+1}^*(W(G))
\end{matrix}
\]
is a group isomorphism.
\item
For $1\le k\le p-1$, the map
\[
\begin{matrix}
\gamma_i^*(W(G))/\gamma_{i+1}^*(W(G)) & \longrightarrow & \gamma_{i+1}^*(W(G))/\gamma_{i+2}^*(W(G))
\\[5pt]
\mathbf{g}\gamma_{i+1}^*(W(G)) & \longmapsto
& \Delta(\mathbf{g})\gamma_{i+2}^*(W(G))
\end{matrix}
\]
is a group isomorphism.
\end{enumerate}
\end{corollary}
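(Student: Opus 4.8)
The plan is to reduce every assertion to the elementary abelian case already settled for $W(A)$, taking $A=\gamma_j(G)/\gamma_{j+1}(G)$, which is elementary abelian by hypothesis. By \eqref{gamma_i W(G)} of \cref{LCS of W(G)} we have
\[
\gamma_i^*(W(G))=\Delta^{k-1}\big(B(\gamma_j(G))\big)\cdot B(\gamma_{j+1}(G)),
\]
so in particular $\gamma_i^*(W(G))\le B(\gamma_j(G))$ and it contains $B(\gamma_{j+1}(G))$. Since $\gamma_j(G)$ is characteristic in $G$, the subgroup $W(\gamma_j(G))=B(\gamma_j(G))\rtimes\langle\sigma\rangle$ lies inside $W(G)$, the normal subgroup $B(\gamma_{j+1}(G))$ is contained in it, and the quotient $W(\gamma_j(G))/B(\gamma_{j+1}(G))$ is naturally isomorphic to $W(A)$, with $B(\gamma_j(G))/B(\gamma_{j+1}(G))$ corresponding to $B(A)$. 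The first step is to record that, under this isomorphism, $\gamma_i^*(W(G))$ maps exactly onto $\gamma_k^*(W(A))=\Delta^{k-1}(B(A))$.

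What makes this identification legitimate is formula \eqref{comm Deltak-1J with W(G)} of \cref{comm with W(G)}: taking the commutator with all of $W(G)$ agrees with applying $\Delta$, modulo $B(\gamma_{j+1}(G))$. Because $A$ is elementary abelian, $\Delta$ is a homomorphism on $B(A)$, and hence $\Delta^{k-1}(B(\gamma_j(G)))\cdot B(\gamma_{j+1}(G))$ is genuinely the full preimage of $\Delta^{k-1}(B(A))=\gamma_k^*(W(A))$. With this dictionary established, each of the four assertions is obtained by translating the corresponding statement about $W(A)$ from \cref{sec:LCS W(G)} back to $G$, reading every equality $\cdots=1$ in $A$ as a congruence $\cdots\equiv 1 \bmod \gamma_{j+1}(G)$ in $G$.

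Concretely, part (i) follows from \cref{equations for gammaiW(A)} applied to $A$: its word equations $\MM((X-1)^{\ell})=1$ and $\MM(f^{(\ell)}(X))=1$ become the stated congruences modulo $\gamma_{j+1}(G)$, and the two displayed special cases for $k=2,3$ are the translation of \cref{equations for gamma2W(A) and gamma3W(A)}. Part (ii) is immediate from the index $|\gamma_k^*(W(A)):\gamma_{k+1}^*(W(A))|=|A|$ coming from \cref{LCS of W(F)}(iii) (which carries over verbatim to $W(A)$), since this index is preserved by the isomorphism and $|A|=|\gamma_j(G):\gamma_{j+1}(G)|$. Parts (iii) and (iv) are, respectively, the translations of items (i) and (iii) of \cref{gens for gammaiW(A)}, using that $\boldsymbol{\lambda}_k(g)=\Delta^{k-1}(g,1,\overset{p-1}{\ldots},1)$ lifts the corresponding generator of $\gamma_k^*(W(A))$.

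The main care, rather than any genuine obstacle, lies in the boundary behaviour at $k=1$ and $k=p$, where the range of equations in (i) is empty (so $\gamma_i^*(W(G))=B(\gamma_j(G))$) and where $\gamma_{i+1}^*(W(G))$ jumps to the next base group $B(\gamma_{j+1}(G))$. Here one invokes \eqref{i=jp+1} together with \cref{k=p+1}, so that the uniform formula \eqref{gamma_i W(G)}, and hence the identification with $W(A)$, remains valid also at $k=p+1$. One must also check that the substitution maps $\MM(\cdot)$ and the tuples $\boldsymbol{\lambda}_k(g)$ descend to well-defined maps on $\gamma_j(G)/\gamma_{j+1}(G)$; this again relies on $A$ being elementary abelian, which guarantees that both $\Delta$ and the word maps are honest homomorphisms modulo $\gamma_{j+1}(G)$.
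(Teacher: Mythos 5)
Your proposal is correct and follows essentially the same route as the paper: the paper likewise observes, via \eqref{gamma_i W(G)} and \cref{k=p+1}, that $\gamma_i^*(W(G))$ is exactly the lift to $W(G)$ of $\gamma_k^*\bigl(W(\gamma_j(G)/\gamma_{j+1}(G))\bigr)$ under the natural isomorphism $W(\gamma_j(G))/B(\gamma_{j+1}(G))\cong W(A)$, and then reads off all four assertions from the results already proved for $W(A)$ with $A$ elementary abelian. Your additional remarks on the boundary cases $k=1,p$ and on well-definedness are sound and only make explicit what the paper leaves implicit.
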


As it turns out, in order to determine the lower central series of GGS-groups of FG-type, we are not going
to need the characterisation of the tuples lying in $\gamma_i^*(W(G))$ given in (i) above
for all $i=(j-1)p+k$ with $1\le k\le p$, but rather the two special cases $k=2$ and $k=3$ that appear
explicitly developed therein.
However, our experience with computations carried out with GAP hints that this result will be needed for higher
values of $k$ if one wants to study the lower central series of arbitrary GGS-groups.
For this reason, and for completeness, we have decided to state \cref{LCS of W(G) detail} in all generality.

\vspace{8pt}

By (iii) of the previous corollary, every element of $B(\gamma_j(G))=\gamma_{(j-1)p+1}^*(W(G))$ is congruent
modulo $\gamma_{(j-1)p+2}^*(W(G))$ to a tuple of the form $(g,1,\ldots,1)$ with $g\in\gamma_j(G)$.
We make this explicit in the following result.

\begin{corollary}
\label{reduction to first component}
Let $G$ be a finite $p$-group of nilpotency class $c$ whose abelianisation is elementary abelian, and let
$1\le j\le c$.
Then for every $\mathbf{g}=(g_0,\ldots,g_{p-1})\in B(\gamma_j(G))$, we have
\[
\mathbf{g} \equiv (g_0\cdots g_{p-1},1,\overset{p-1}{\ldots},1)
\mod
\gamma_{(j-1)p+2}(W(G)).
\]
\end{corollary}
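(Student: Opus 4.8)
The plan is to deduce the congruence directly from the membership criterion in part (i) of \cref{LCS of W(G) detail}, rather than re-deriving anything about the wreath product from scratch. First I would record that, by \eqref{gamma_jp+1 W(G)} applied with $j-1$ in place of $j$, we have $\gamma_{(j-1)p+1}^*(W(G))=B(\gamma_j(G))$, so that both $\mathbf{g}$ and the target tuple $(g_0\cdots g_{p-1},1,\overset{p-1}{\ldots},1)$ lie in $B(\gamma_j(G))$ and the asserted congruence makes sense. Since $(j-1)p+2\ge 2$, the identity $\gamma_i(W(G))=\gamma_i^*(W(G))$ for $i\ge 2$ lets me use the two notations interchangeably at the level $(j-1)p+2$.

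Next I would form the ``difference''
\[
\mathbf{h}=\mathbf{g}\cdot(g_0\cdots g_{p-1},1,\overset{p-1}{\ldots},1)^{-1}
=\big(g_0(g_0\cdots g_{p-1})^{-1},\,g_1,\,\ldots,\,g_{p-1}\big)\in B(\gamma_j(G)),
\]
and compute the product of its entries. By hypothesis $\gamma_j(G)/\gamma_{j+1}(G)$ is abelian, so modulo $\gamma_{j+1}(G)$ the factors may be freely rearranged, giving
\[
g_0(g_0\cdots g_{p-1})^{-1}\,g_1\cdots g_{p-1}\equiv (g_0 g_1\cdots g_{p-1})(g_0\cdots g_{p-1})^{-1}=1.
\]
Thus the product of the entries of $\mathbf{h}$ lies in $\gamma_{j+1}(G)$, and the membership criterion in (i) of \cref{LCS of W(G) detail} (the case $k=2$) yields $\mathbf{h}\in\gamma_{(j-1)p+2}(W(G))$, which is precisely the claimed congruence.

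An alternative, more hands-on route avoids invoking (i): writing $\mathbf{g}$ as the product of the coordinate tuples supported in a single slot and using the shift formula $\mathbf{g}^\sigma=(g_{p-1},g_0,\ldots,g_{p-2})$, each such tuple equals $(g_m,1,\overset{p-1}{\ldots},1)^{\sigma^m}$, which differs from $(g_m,1,\overset{p-1}{\ldots},1)$ by a commutator lying in $[B(\gamma_j(G)),W(G)]=\gamma_{(j-1)p+2}^*(W(G))$; multiplying in the abelian quotient $B(\gamma_j(G))/\gamma_{(j-1)p+2}^*(W(G))$ (abelian by (iii) of \cref{LCS of W(G) detail}) then collects every $g_m$ into the first slot. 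In either approach the only point requiring care is the non-commutativity of $B(\gamma_j(G))$ itself: the rearrangements are legitimate only after passing to the quotient where $\gamma_j(G)/\gamma_{j+1}(G)$ is abelian, so I would be careful to perform each manipulation modulo $\gamma_{(j-1)p+2}(W(G))$. This is the sole, and very mild, obstacle.
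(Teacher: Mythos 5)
Your main argument is correct and is essentially the paper's own proof: the paper likewise forms the quotient tuple (it uses $\mathbf{g}^{-1}\cdot(g_0\cdots g_{p-1},1,\overset{p-1}{\ldots},1)=(g_1\cdots g_{p-1},g_1^{-1},\ldots,g_{p-1}^{-1})$, the mirror image of your $\mathbf{h}$) and applies the $k=2$ membership criterion from (i) of \cref{LCS of W(G) detail}, with the rearrangement justified exactly as you say by the abelianness of $\gamma_j(G)/\gamma_{j+1}(G)$.
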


\begin{proof}
We have
\[
\mathbf{g}^{-1} \cdot (g_0\cdots g_{p-1},1,\overset{p-1}{\ldots},1)
=
(g_1\cdots g_{p-1},g_1^{-1},\ldots,g_{p-1}^{-1}),
\]
and by (i) of \cref{LCS of W(G) detail}, this last tuple belongs to $\gamma_{(j-1)p+2}(W(G))$.
\end{proof}

\section{Lower central series of $G_n$ for $n\le 3$}
\label{sec:n at most 3}

In this section, we will determine the lower central series of the congruence quotient $G_n=G/\st_G(n)$
for $n\le 3$.
The result is trivial for $G_1$, which is cyclic of order $p$.
In the case of $G_2$, by Theorem 2.4 of \cite{Gustavo} we know that, for an arbitrary GGS-group $G$,
the quotient $G_2$ is a $p$-group of maximal class of order $p^{t+1}$, where $t$ is the rank of the
circulant matrix whose first row is $(e_1,\dots,e_{p-1},0)$.
By Lemma 2.7 of \cite{Gustavo}, if $G$ is non-periodic then $t=p$ and $G_2$ has nilpotency class $p$
(actually $G_2$ is isomorphic to $C_p\wr C_p$).
As already mentioned at the beginning of \cref{sec:uniseriality}, if $g\in G_2\smallsetminus \St_{G_2}(1)$
then $\langle g \rangle$ acts uniserially on $\St_{G_2}(1)$, and consequently the quotient
$\gamma_i(G_2)/\gamma_{i+1}(G_2)$ can be generated by the image of $[b,g,\overset{i-1}{\ldots},g]$
for every $i=2,\ldots,p$.
We summarise all this information in the following result.

\begin{theorem}
\label{LCS G2}
Let $G$ be a non-periodic GGS-group.
Then the following hold:
\begin{enumerate}
\item
We have $|G_2:G_2'|=p^2$ and $|\gamma_i(G_2):\gamma_{i+1}(G_2)|=p$ for every $i=2,\ldots,p$.
Also $\gamma_{p+1}(G_2)=1$ and $G_2$ has nilpotency class $p$.
\item 
For every $i\ge 2$ and every $g\in G\smallsetminus \St_G(1)$, we have
\[
\gamma_i(G_2) = \langle [b,g,\overset{i-1}{\ldots},g] \rangle \gamma_{i+1}(G_2).
\]
In particular,
$\gamma_i(G_2) = \langle [b,a^{\varepsilon}b,\overset{i-1}{\ldots},a^{\varepsilon}b] \rangle \gamma_{i+1}(G_2)$.
\end{enumerate}
\end{theorem}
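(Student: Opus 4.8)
The plan is to establish \cref{LCS G2} by leveraging the results already assembled, most of which are cited verbatim from \cite{Gustavo}. The statement is largely a repackaging of three known facts together with the uniseriality established in \cref{uniserial}, so the proof should be short. First I would recall, from Theorem 2.4 of \cite{Gustavo}, that for an arbitrary GGS-group the quotient $G_2$ is a $p$-group of maximal class of order $p^{t+1}$, where $t$ is the rank of the circulant matrix with first row $(e_1,\dots,e_{p-1},0)$; then, invoking Lemma 2.7 of \cite{Gustavo}, the non-periodicity hypothesis $\varepsilon\ne 0$ forces $t=p$, so $|G_2|=p^{p+1}$ and $G_2$ has nilpotency class exactly $p$. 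A maximal-class $p$-group of order $p^{p+1}$ has $|G_2:G_2'|=p^2$ and $|\gamma_i(G_2):\gamma_{i+1}(G_2)|=p$ for $2\le i\le p$ by the very definition of maximal class, with $\gamma_{p+1}(G_2)=1$. This yields part (i) immediately.

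For part (ii), the key input is that $\St_{G_2}(1)$ is a maximal subgroup of $G_2$ on which any element $g\in G\smallsetminus\St_G(1)$ acts uniserially; this is exactly the case $n=2$ of \cref{uniserial}(ii), since such a $g$ has order $p$ in $G_1$. I would then observe that $b\in\St_G(1)$ generates $\St_{G_2}(1)$ modulo $[\St_{G_2}(1),G_2]$, using that $G_2/\St_{G_2}(1)$ is cyclic generated by the image of $a$ and that $\St_{G_2}(1)$ is abelian (indeed elementary abelian). By the uniserial structure, $\gamma_2(G_2)=[\St_{G_2}(1),g]=[\St_{G_2}(1),\langle g\rangle]$, and then iterating the last assertion of \cref{uniserial}—that $[h,g,\overset{i}{\ldots},g]$ generates the $i$th term of the uniserial chain modulo the next—with $h=b$ gives
\[
\gamma_i(G_2)=\langle [b,g,\overset{i-1}{\ldots},g]\rangle\,\gamma_{i+1}(G_2)
\]
for every $i\ge 2$. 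The special case $g=a^\varepsilon b$ then follows because $a^\varepsilon b\in G\smallsetminus\St_G(1)$ (as $\varepsilon\ne 0$ means $a^\varepsilon$ acts nontrivially on the first level).

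The main obstacle, such as it is, lies in correctly identifying $\gamma_i(G_2)$ with the terms $[\St_{G_2}(1),\langle g\rangle,\overset{i-1}{\ldots},\langle g\rangle]$ of the uniserial chain rather than merely bounding them. Here I would argue that since $G_2'=[\St_{G_2}(1),G_2]$ (as $G_2/\St_{G_2}(1)$ is cyclic) and $[\St_{G_2}(1),G_2]=[\St_{G_2}(1),\langle g\rangle]$ by \eqref{comm H with <g>}, the two chains agree at the first step; an induction using $\gamma_{i+1}(G_2)=[\gamma_i(G_2),G_2]=[\gamma_i(G_2),\langle g\rangle]$ (again by \eqref{comm H with <g>}, valid because each $\gamma_i(G_2)\le\St_{G_2}(1)$ is $g$-invariant) then propagates the identification through all terms. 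The only point requiring care is verifying that $b$ is a valid generator of $\St_{G_2}(1)$ modulo $[\St_{G_2}(1),G_2]$; this is where the hypothesis that $\mathbf e$ is non-constant (implicit in the non-periodic, maximal-class setting) ensures the dimension count works out, so that the single generator $b$ suffices.
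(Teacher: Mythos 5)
Your argument is correct and follows essentially the same route as the paper, which likewise obtains (i) from Theorem 2.4 and Lemma 2.7 of the cited reference (maximal class, order $p^{\,p+1}$) and (ii) from the uniserial action of $\langle g \rangle$ on $\St_{G_2}(1)$ together with the identification $G_2'=[\St_{G_2}(1),G_2]=[\St_{G_2}(1),g]$. The only cosmetic slip is your closing remark: that $b$ generates $\St_{G_2}(1)$ modulo $[\St_{G_2}(1),G_2]$ is automatic, because $\St_{G_2}(1)$ is the normal closure of $b$ in $G_2$ (so every generator $b^{a^i}=b[b,a^i]$ is congruent to $b$ there); the non-constancy/non-periodicity hypothesis is needed only to force $t=p$, i.e.\ to make the uniserial chain reach down to $\gamma_{p+1}(G_2)=1$ with all intermediate indices equal to $p$.
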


In the remainder of this section we deal with the case of $G_3$.
The idea is to exploit the uniseriality of the action of $G_3$ on $\St_{G_3}(2)$, which we proved in
\cref{uniserial}.
Since
\[
|\St_{G_3}(2)| = \frac{|G_3|}{|G_2|} = \frac{p^{\,p^2+1}}{p^{\,p+1}} = p^{\,p^2-p}
\]
by Theorem A of \cite{Gustavo}, it follows that the subgroups $[\St_{G_3}(2),G_3,\overset{i}{\ldots},G_3]$
for $0\le i\le p^2-p$ are the only subgroups of $\St_{G_3}(2)$ that are normal in $G_3$, with consecutive
indices equal to $p$.
As a consequence, if a subgroup of $\St_{G_3}(2)$ of index $p^i$ is normal in $G_3$ then it coincides with
$[\St_{G_3}(2),G_3,\overset{i}{\ldots},G_3]$.
On several occasions, we will need to identify in which difference
$[\St_{G_3}(2),G_3,\overset{i}{\ldots},G_3]\smallsetminus [\St_{G_3}(2),G_3,\overset{i+1}{\ldots},G_3]$ an element
$g\in\St_{G_3}(2)$ lies, and to this purpose we will use the map $\psi$ as follows.

Observe that, by \eqref{psi stG(2)} and \eqref{regular branch gamma3}, we obtain that
\begin{equation}
\label{psi stG3(2)/gamma3(stG3(1))}
\psi \left( \frac{\St_{G_3}(2)}{\gamma_3(\St_{G_3}(1))} \right)
=
\frac{G_2'}{\gamma_3(G_2)} \times \overset{p}{\cdots} \times \frac{G_2'}{\gamma_3(G_2)}
\end{equation}
is equal to the base group $B(G_2'/\gamma_3(G_2))$ of $W(G_2'/\gamma_3(G_2))$.
(Recall that we still denote by $\psi$ the maps induced by $\psi$ in quotients.)
In particular,
\[
|\St_{G_3}(2):\gamma_3(\St_{G_3}(1))| = |G_2':\gamma_3(G_2)|^p = p^{\,p}
\]
and
\[
\gamma_3(\St_{G_3}(1)) = [\St_{G_3}(2),G_3,\overset{p}{\ldots},G_3].
\]
Let us see how we can use \eqref{psi stG3(2)/gamma3(stG3(1))} in order to identify the elements lying
in the subgroups $[\St_{G_3}(2),G_3,\overset{i}{\ldots},G_3]$ for $0\le i\le p$.
Since $\St_{G_3}(2)=\St_{G_3}(1)'$ is central in $\St_{G_3}(1)$ modulo $\gamma_3(\St_{G_3}(1))$, we have
\[
[\St_{G_3}(2),G_3,\overset{i}{\ldots},G_3]
=
[\St_{G_3}(2),a,\overset{i}{\ldots},a] \gamma_3(\St_{G_3}(1))
\]
for $0\le i\le p$.
Then
\begin{align*}
\psi \left( \frac{[\St_{G_3}(2),G_3,\overset{i}{\ldots},G_3]}{\gamma_3(\St_{G_3}(1))} \right)
&=
[B(G_2'/\gamma_3(G_2)),\sigma,\overset{i}{\ldots},\sigma]
\\
&=
\Delta^{i} (B(G_2'/\gamma_3(G_2)))
\\
&=
\gamma_{i+1}(W(G_2'/\gamma_3(G_2))),
\end{align*}
by \eqref{gammaiW(A) and Delta}.
Since $G_2'/\gamma_3(G_2)$ is cyclic of order $p$ generated by the image of $[b,a]$, we can detect whether an element
$g\in\St_{G_3}(2)$ lies in $[\St_{G_3}(2),G_3,\overset{i}{\ldots},G_3]$ simply by applying $\psi$ to $g$, writing the
components of $\psi(g)$ as powers of $[b,a]$ modulo $\gamma_3(G_2)$, and checking whether the corresponding exponents
(considered as elements of $\F_p$) satisfy the equations that describe the $\F_p$-vector space $\gamma_{i+1}(W(\F_p))$,
as given in \cref{equations for gammaiW(F)}.

We will use this approach in the proof of \cref{position commutators}, where we determine the position of
some iterated commutators in $a$ and $b$ of lengths $p$ and $p+1$, which will be key to the determination of
the lower central series of $G_3$.
We need the following two preliminary results.

\begin{lemma}
\label{comm a with sigma p-2 times}
Let $\boldsymbol{\lambda}=(\lambda_0,\ldots,\lambda_{p-1})\in B(\F_p)$.
Then in $W(\F_p)$ we have
\[
[\boldsymbol{\lambda},\sigma,\overset{p-2}{\ldots},\sigma]
=
(-\alpha+\beta,-2\alpha+\beta,\ldots,-(p-1)\alpha+\beta,\beta),
\]
where $\alpha=\sum_{i=0}^{p-1}\, \lambda_i$ and $\beta=\sum_{i=1}^{p-1}\, i\lambda_i$.
\end{lemma}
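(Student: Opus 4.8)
The plan is to work entirely inside the algebra isomorphism $\Theta\colon B(\F_p)\to \F_p[X]/(X^p-1)$ from \cref{LCS of W(F)}, under which taking a commutator with $\sigma$ becomes multiplication by $\overline{X-1}$. Thus computing $[\boldsymbol{\lambda},\sigma,\overset{p-2}{\ldots},\sigma]$ amounts to computing $(\overline{X-1})^{p-2}\,\Theta(\boldsymbol{\lambda})$ in $\F_p[X]/(X^p-1)$, and then reading off the coefficients of the resulting class $\overline{\mu_0+\mu_1 X+\cdots+\mu_{p-1}X^{p-1}}$ to recover the tuple $(\mu_0,\ldots,\mu_{p-1})=[\boldsymbol{\lambda},\sigma,\overset{p-2}{\ldots},\sigma]$. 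The key algebraic input is the identity \eqref{X-1 to p-1}, namely $(X-1)^{p-1}=1+X+\cdots+X^{p-1}$ in $\F_p[X]$, which I would exploit as follows: since $\Theta([\boldsymbol{\lambda},\sigma,\overset{p-1}{\ldots},\sigma])=(\overline{X-1})^{p-1}\Theta(\boldsymbol{\lambda})$ and $(\overline{X-1})^{p-1}=\overline{1+X+\cdots+X^{p-1}}$ annihilates (up to the relation $X^p=1$) all but the ``total sum'' part, one gets a clean handle on the top power. Concretely I would first establish the answer for the two basis-type cases and then use $\F_p$-linearity of $\Delta$ (hence of $\boldsymbol{\lambda}\mapsto[\boldsymbol{\lambda},\sigma,\overset{p-2}{\ldots},\sigma]$).

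First I would reduce to computing the image of the standard basis vector $\mathbf e_j=(0,\ldots,1,\ldots,0)$ (the $1$ in position $j$), whose $\Theta$-image is $\overline{X^j}$. Applying $\Delta^{p-2}$ corresponds to multiplication by $(\overline{X-1})^{p-2}$, so I must expand $(X-1)^{p-2}X^j \bmod (X^p-1)$ and extract coefficients. Here the cleanest route is to write $(X-1)^{p-2}=(X-1)^{p-1}/(X-1)=(1+X+\cdots+X^{p-1})/(X-1)$ and recognise that multiplication by $(X-1)^{p-2}$ sends $\overline{X^j}$ to a class whose coefficient pattern is governed by partial sums. An equivalent and perhaps more transparent approach: the right-hand side of the claimed formula, $(-\alpha+\beta,-2\alpha+\beta,\ldots,-(p-1)\alpha+\beta,\beta)$, is $\F_p$-linear in $\boldsymbol{\lambda}$ (both $\alpha=\sum_i\lambda_i$ and $\beta=\sum_i i\lambda_i$ are linear), so it suffices to verify the identity on the basis $\{\mathbf e_0,\ldots,\mathbf e_{p-1}\}$. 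For $\mathbf e_j$ we have $\alpha=1$ and $\beta=j$, so the claimed output tuple has $r$-th entry $-(r+1)\cdot 1+j$ for $0\le r\le p-2$ and $j$ in the last entry; I would check this coincides with the coefficient vector of $(\overline{X-1})^{p-2}\,\overline{X^j}$.

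The main obstacle will be the bookkeeping in the modular polynomial expansion: one must compute the coefficients of $(X-1)^{p-2}X^j$ reduced modulo $X^p-1$ and confirm they equal the affine-linear pattern $-(r+1)\alpha+\beta$ in each coordinate, including the wrap-around caused by reducing mod $X^p-1$. I expect the cleanest way to dispatch this is to observe that $(\overline{X-1})^{p-2}=\overline{X-1}\cdot(\overline{X-1})^{p-3}$ generates $\mathfrak m^{p-2}$, a $2$-dimensional ideal by \cref{LCS of W(F)}(iii), and that $\gamma_{p-1}(W(\F_p))$ is spanned by $(\overline{X-1})^{p-2}$ and $(\overline{X-1})^{p-1}$; the tuple on the right of the statement visibly satisfies the single defining equation of $\gamma_{p-1}(W(\F_p))$ from \cref{equations for gammaiW(F)}(i) (the condition $\LL((X-1)^{p-1})=0$, i.e.\ $\sum_r\mu_r=0$), which provides a strong consistency check and cuts the verification down to matching two free parameters $\alpha,\beta$. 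I would finish by pinning those two parameters using the value on $\mathbf e_0$ (giving $\alpha=1,\beta=0$) and on $\mathbf e_1$ (giving $\alpha=1,\beta=1$), after which linearity yields the general formula.
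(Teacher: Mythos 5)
Your overall framework coincides with the paper's: both pass through the isomorphism $\Theta$ and reduce the lemma to computing $(\overline{X-1})^{\,p-2}\,\Theta(\boldsymbol{\lambda})$ in $\F_p[X]/(X^p-1)$. Where you diverge is in how that product is evaluated. Your primary plan is to check the identity on each basis vector $\mathbf{e}_j$, which means expanding $(X-1)^{p-2}X^j$ modulo $X^p-1$ for every $j$ and tracking the wrap-around; this does work (the needed input is $(X-1)^{p-2}=-\sum_{i=0}^{p-2}(i+1)X^i$, coming from $\binom{p-2}{i}\equiv(-1)^i(i+1) \bmod p$), but it amounts to $p$ separate verifications. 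The paper instead writes $f(X)=\alpha+\beta(X-1)+g(X)(X-1)^2$ with $\alpha=f(1)$, $\beta=f'(1)$, and uses $(X-1)^p=X^p-1\equiv 0$ to kill the $g$-term in one stroke, so that only the two fixed polynomials $(X-1)^{p-2}$ and $(X-1)^{p-1}$ need to be expanded. That buys a uniform, coordinate-free computation in place of case-by-case bookkeeping.

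One caution about your closing ``cleanest way'': as stated, evaluating only at $\mathbf{e}_0$ and $\mathbf{e}_1$ does not suffice. Two linear maps on the $p$-dimensional space $B(\F_p)$ that happen to have $2$-dimensional images need not coincide just because they agree on two basis vectors; you must first know that the commutator map $\Delta^{p-2}$ factors through $\boldsymbol{\lambda}\mapsto(\alpha,\beta)$, i.e.\ that it annihilates $\Theta^{-1}(\mathfrak{m}^2)$. That is exactly the content of the paper's Taylor-expansion step ($f\equiv\alpha+\beta(X-1)$ modulo $\mathfrak{m}^2$, and $\mathfrak{m}^2\cdot\mathfrak{m}^{p-2}=0$). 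So either carry out the full basis check, or insert that one-line factorisation argument, after which your two-point evaluation at $(\alpha,\beta)=(1,0)$ and $(1,1)$ legitimately pins down the formula.
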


\begin{proof}
We use the map $\Theta:B(\F_p)\rightarrow \F_p[X]/(X^p-1)$ introduced in \cref{sec:LCS W(G)}.
Let $f(X)=\sum_{i=0}^{p-1} \, \lambda_iX^i$.
Then taking the commutator of $\boldsymbol{\lambda}$ with $\sigma$, $p-2$ times, corresponds under $\Theta$ to multiplying
$\overline{f(X)}$ with $\overline{X-1}^{\,p-2}$.
Since $f(1)=\alpha$ and $f'(1)=\beta$, we can write $f(X)=\alpha+\beta(X-1)+g(X)(X-1)^2$ in $\F_p[X]$
and then
\[
\overline{f(X)}\cdot \overline{X-1}^{\,p-2} = \alpha \cdot \overline{X-1}^{\,p-2} + \beta \cdot \overline{X-1}^{\,p-1}
\]
in $\F_p[X]/(X^p-1)$.
Now we have
\[
(X-1)^{p-2} = - \sum_{i=0}^{p-2} \, (i+1)X^i
\]
in $\F_p[X]$, since
\begin{equation}
\label{binomial p-2}
\binom{p-2}{i}
=
\frac{(p-2)(p-3)\cdots (p-i-1)}{i!}
\equiv
(-1)^i (i+1)
\mod p,
\end{equation}
and we already know from \eqref{X-1 to p-1} that
\[
(X-1)^{p-1} = \sum_{i=0}^{p-1} \, X^i.
\]
Hence
\[
\overline{f(X)}\cdot \overline{X-1}^{\,p-2}
=
\sum_{i=0}^{p-2} \, \left(-(i+1)\alpha+\beta \right) \cdot \overline{X}^i + \beta \cdot \overline{X}^{p-1},
\]
and the result follows.
\end{proof}

\begin{lemma}
\label{comm bap-2}
Let $G$ be a GGS-group with defining vector $\mathbf{e}=(e_1,\ldots,e_{p-1})$.
Then
\begin{multline*}
\psi([b,a,\overset{p-2}{\ldots},a])
\equiv
(a^{\delta-2\varepsilon}b^{-2},a^{\delta-3\varepsilon}b^{-3},\ldots,a^{\delta-(p-1)\varepsilon}b^{-(p-1)},a^{\delta},
a^{\delta-\varepsilon}b^{-1})
\\
\mod (G' \times \cdots \times G').
\end{multline*}
\end{lemma}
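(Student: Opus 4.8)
The plan is to carry out the whole computation inside the abelian quotient $G/G'$, exploiting that, modulo $B(G')=G'\times\cdots\times G'$, taking a commutator with $a$ is nothing but the linear map $\Delta$ studied in \cref{sec:LCS W(G)}. First I would record the two facts I will lean on. Since $b\in\St_G(1)$, every iterated commutator $[b,a,\overset{k}{\ldots},a]$ again lies in $\St_G(1)$, so $\psi$ applies throughout and lands in $B(G)$; and $G/G'$ is elementary abelian of order $p^2$, generated by the images of $a$ and $b$, so that $B(G)/B(G')\cong B(G/G')$ is an $\F_p$-vector space that splits as the direct sum of an ``$a$-part'' and a ``$b$-part'', recording the exponents of the two generators in each component.

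The key step is the observation that, for $f\in\St_G(1)$, formula \eqref{commutator of f and a} says that $\psi([f,a])$ is obtained from $\psi(f)$ by exactly the rule defining $\Delta(\mathbf g)=[\mathbf g,\sigma]$, once the tree-indices $\{1,\dots,p\}$ are identified with the wreath-indices $\{0,\dots,p-1\}$ via $i\leftrightarrow i\bmod p$ (this identification is forced by comparing the shift of \eqref{f conjugate a} with $\mathbf g^\sigma=(g_{p-1},g_0,\dots,g_{p-2})$, and one checks $\psi(f^a)=\psi(f)^\sigma$). Iterating gives
\[
\psi\big([b,a,\overset{p-2}{\ldots},a]\big)=\Delta^{p-2}\big(\psi(b)\big)
\]
in $B(G)$. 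Passing to $B(G/G')$, the operator $\Delta$ descends to an $\F_p$-linear map (a homomorphism because $G/G'$ is abelian) that respects the splitting into $a$- and $b$-parts. Hence I can evaluate $\Delta^{p-2}$ separately on each part by invoking \cref{comm a with sigma p-2 times}, which computes precisely $[\,\cdot\,,\sigma,\overset{p-2}{\ldots},\sigma]=\Delta^{p-2}$ on $B(\F_p)$.

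It then remains to feed the two exponent vectors of $\psi(b)=(a^{e_1},\dots,a^{e_{p-1}},b)$ into that lemma. In wreath-indexing the $a$-part is $\boldsymbol\lambda^{(a)}=(0,e_1,\dots,e_{p-1})$, for which $\alpha=\sum_i e_i=\varepsilon$ and $\beta=\sum_i ie_i=\delta$, while the $b$-part is $\boldsymbol\lambda^{(b)}=(1,0,\overset{p-1}{\ldots},0)$, for which $\alpha=1$ and $\beta=0$. By \cref{comm a with sigma p-2 times} the wreath-component $j$ of the result carries $a$-exponent $-(j+1)\varepsilon+\delta$ and $b$-exponent $-(j+1)$. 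Translating back to tree-indexing (so $j=1,\dots,p-1$ are tree-positions $1,\dots,p-1$ and $j=0$ is tree-position $p$) and simplifying the wrap-around entries with $p\varepsilon=0$ and $b^{-p}=1$ produces exactly the claimed tuple, with the entry $a^{\delta}$ (from $-p\varepsilon$, $b^{-p}$) at position $p-1$ and $a^{\delta-\varepsilon}b^{-1}$ at position $p$.

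The main obstacle I anticipate is organisational rather than conceptual: keeping the two indexing conventions straight and confirming that the cyclic shift in \eqref{f conjugate a} runs in the same direction as the one defining $\Delta$. Getting this offset right is precisely what yields the slightly asymmetric shape of the answer — the exponents $-(j+1)$ rather than $-j$ — and it is also what singles out the special form of the final two components once the indices wrap around modulo $p$.
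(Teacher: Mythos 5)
Your proposal is correct and takes essentially the same route as the paper's proof: both reduce modulo $G'$, split $B(G/G')$ into its $\langle a\rangle$- and $\langle b\rangle$-parts, identify iterated commutation with $a$ with $\Delta^{p-2}$, and feed the two exponent vectors of $\psi(b)$ into \cref{comm a with sigma p-2 times}. Your wreath-indexing is rotated by one relative to the paper's (which takes $\boldsymbol\lambda=(e_1,\ldots,e_{p-1},0)$ and gets $\beta=\delta-\varepsilon$, versus your $(0,e_1,\ldots,e_{p-1})$ with $\beta=\delta$), but since any cyclic rotation commutes with $\Delta$ the two conventions yield the same tuple.
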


\begin{proof}
Since $G/G'=\langle aG' \rangle \times \langle bG' \rangle$, we can determine separately
the components of the tuple $\psi([b,a,\overset{p-2}{\ldots},a])$ modulo $\langle a \rangle G'$ as powers
of $b$, and its components modulo $\langle b \rangle G'$ as powers of $a$, and then bring these two
results together.

We first determine the components modulo $\langle a \rangle G'$.
We have
\[
\psi(b) \equiv (1,\ldots,1,b) \mod (\langle a \rangle G' \times \cdots \langle a \rangle G').
\]
Now taking the commutator with $a$ in $G$ corresponds to taking the commutator with $\sigma$ after applying $\psi$.
Since we are interested in the components modulo $\langle a \rangle G'$, we want the value of the commutator
of $(1,\ldots,1,b)$ with $\sigma$, $p-2$ times, in the group $W(G/\langle a \rangle G')$.
Since $G/\langle a \rangle G'$ is cyclic of order $p$, generated by the image of $b$, we can use
\cref{comm a with sigma p-2 times}, applied to the vector $\boldsymbol{\lambda}=(0,\ldots,0,1)$, and get
\[
\psi([b,a,\overset{p-2}{\ldots},a])
\equiv
(b^{-2},b^{-3},\ldots,b^{-(p-1)},1,b^{-1})
\mod (\langle a \rangle G' \times \cdots \times \langle a \rangle G').
\]

We can similarly obtain the components modulo $\langle b \rangle G'$, again by using \cref{comm a with sigma p-2 times},
this time with $\boldsymbol{\lambda}=(e_1,\ldots,e_{p-1},0)$.
Note that we have $\alpha=\varepsilon$ and
\[
\beta = e_2 + 2e_3 + \cdots + (p-2)e_{p-1} = \delta - \varepsilon.
\]
Hence
\[
\psi([b,a,\overset{p-2}{\ldots},a])
\equiv
(a^{\delta-2\varepsilon},a^{\delta-3\varepsilon},\ldots,a^{\delta-(p-1)\varepsilon},a^{\delta},a^{\delta-\varepsilon})
\mod (\langle b \rangle G' \times \cdots \times \langle b \rangle G').
\]

From the calculations above, it follows that
\begin{multline*}
\psi([b,a,\overset{p-2}{\ldots},a])
\equiv
(a^{\delta-2\varepsilon}b^{-2},a^{\delta-3\varepsilon}b^{-3},\ldots,a^{\delta-(p-1)\varepsilon}b^{-(p-1)},a^{\delta},
a^{\delta-\varepsilon}b^{-1})
\\
\mod (G' \times \cdots \times G'),
\end{multline*}
as desired.
\end{proof}

\begin{lemma}
\label{position commutators}
Let $G$ be a GGS-group of FG-type.
Then the following hold:
\begin{enumerate}
\item 
$[b,a,\overset{p-1}{\ldots},a]\in \gamma_p(G_3) \smallsetminus \St_{G_3}(2)$.
\item 
$[b,a,\overset{p-2}{\ldots},a,b]\in \St_{G_3}(2)\smallsetminus [\St_{G_3}(2),G_3]$.
\item 
$[b,a,\overset{p}{\ldots},a]\in [\St_{G_3}(2),G_3,G_3]$.
\item 
$[b,a,\overset{p-1}{\ldots},a,b]\in [\St_{G_3}(2),G_3]\smallsetminus [\St_{G_3}(2),G_3,G_3]$.
\item 
$[b,a,\overset{p-2}{\ldots},a,b,a][b,a,\overset{p-1}{\ldots},a,b]\in [\St_{G_3}(2),G_3]\smallsetminus [\St_{G_3}(2),G_3,G_3]$.
\end{enumerate}
\end{lemma}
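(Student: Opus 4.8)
The plan is to reduce each of the five statements to a short computation with the detection machinery set up just before the lemma: for $g\in \St_{G_3}(2)$ one applies $\psi$, writes the components of $\psi(g)$ as powers of $[b,a]$ modulo $\gamma_3(G_2)$, and tests whether the resulting exponent vector $\boldsymbol\mu\in B(\F_p)$ lies in the appropriate $\gamma_{i+1}(W(\F_p))$ via the explicit equations of \cref{equations for gamma2W(F) and gamma3W(F)}. Throughout I abbreviate $c=[b,a,\overset{p-2}{\ldots},a]$ and $s=a^{\varepsilon}b$, and I single out the two linear functionals that control everything: $E_1(\boldsymbol\mu)=\sum_r\mu_r$, whose vanishing characterises membership in $[\St_{G_3}(2),G_3]$, and $E_2(\boldsymbol\mu)=\sum_r r\mu_r$, whose vanishing together with $E_1=0$ characterises $[\St_{G_3}(2),G_3,G_3]$. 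The one external input is \cref{comm bap-2}, which gives $c|_i\equiv a^{\delta}s^{m_i}\pmod{G'}$ with exponent pattern $(m_1,\dots,m_p)=(-2,-3,\dots,-(p-1),0,-1)$; the elementary but decisive observation is that the consecutive differences $m_{i-1}-m_i$ all equal $1$.

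Statement (i) I would treat separately and first, as it is easiest: $[c,a]=[b,a,\overset{p-1}{\ldots},a]$ lies in $\gamma_p(G_3)$ by weight, and its image in $G_2=G_3/\St_{G_3}(2)$ generates the nontrivial group $\gamma_p(G_2)$ by \cref{LCS G2}, so it is not in $\St_{G_3}(2)$. For (ii) I note $[c,b]\in\St_{G_3}(2)$ because in $G_2$ both $c$ and $b$ lie in the abelian subgroup $\St_{G_2}(1)$; computing $\psi([c,b])_i=[c|_i,b|_i]$ and using bilinearity modulo $\gamma_3(G_2)$ turns the components (via \cref{comm bap-2}) into $[b,a]^{m_ie_i}$ for $i<p$ and $[b,a]^{\varepsilon-\delta}$ for $i=p$; summing yields $E_1=-2\delta\ne 0$, so $[c,b]\notin[\St_{G_3}(2),G_3]$. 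Statement (iv) is analogous, now using that $\psi([c,a])$ is the \emph{constant} tuple $s$ modulo $B(G')$ (since $c|_i^{-1}c|_{i-1}\equiv s^{m_{i-1}-m_i}=s$): then $[c,a,b]$ has components $[b,a]^{e_i}$ for $i<p$ and $[b,a]^{-\varepsilon}$ for $i=p$, giving $E_1=\varepsilon-\varepsilon=0$ and $E_2=\delta\ne 0$.

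The crux is (iii). Writing $\psi([c,a])=\mathbf{s}\cdot\mathbf{w}$ with $\mathbf{s}=(s,\overset{p}{\ldots},s)$ constant and $\mathbf{w}\in B(G')$, and using that constant tuples commute with $\sigma$, I get $\psi([c,a,a])=\Delta(\psi([c,a]))=\Delta(\mathbf{w})\in B(G')$, so $[c,a,a]\in\St_{G_3}(2)$ and I must check $E_1=E_2=0$. The condition $E_1=0$ is automatic, since $\Delta(\mathbf{w})$ has telescoping components $\mathbf{w}_i^{-1}\mathbf{w}_{i-1}$. For $E_2$ one shows $E_2(\Delta(\mathbf{w}))=\sum_i\nu(\mathbf{w}_i)$, where $\nu\colon G'\to\F_p$ is the $[b,a]$-exponent homomorphism; and writing $c|_i=a^{\delta}s^{m_i}z_i$ with $z_i\in G'$, the constancy $m_{i-1}-m_i=1$ forces $\mathbf{w}_i=s^{-1}c|_i^{-1}c|_{i-1}\equiv z_i^{-1}z_{i-1}\pmod{\gamma_3(G_2)}$, whence $\sum_i\nu(\mathbf{w}_i)$ telescopes to $0$. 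What makes this work is precisely that the unknown corrections $z_i$ cancel, so only mod-$G'$ data ever enters. Finally (v): $[c,b,a]$ is obtained from (ii) by one further commutator with $a$, i.e.\ by applying $\Delta$, which under $\Theta$ is multiplication by $(X-1)$; this carries the class of $[c,b]$ into $[\St_{G_3}(2),G_3]$ with $E_2=-2\delta$, while (iv) gives $[c,a,b]$ with $E_2=\delta$. As $\St_{G_3}(2)$ is elementary abelian, $E_2$ is additive, so the product has $E_2=-\delta\ne 0$ and $E_1=0$.

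The hard part will be (iii): both fixing the correct additive functional $E_2$ on exponent vectors and verifying that $\Delta$ interacts with the constant tuple $\mathbf{s}$ so that the $\gamma_3(G_2)$-corrections telescope away. Everything else is bilinear bookkeeping modulo $\gamma_3(G_2)$, and the recurring feature I would double-check is that each nonvanishing quantity is a nonzero scalar multiple of $\delta$ — which is exactly where the FG-type hypothesis $\delta\ne 0$ (together with $p$ odd, so $2\ne 0$) is used.
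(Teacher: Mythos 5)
Your overall strategy --- push everything through $\psi$, use \cref{comm bap-2}, and test the resulting exponent vectors against the linear equations of \cref{equations for gamma2W(F) and gamma3W(F)} --- is exactly the paper's, and your treatments of (i), (ii) and (iv) reproduce the paper's computations (the values $-2\delta$, $0$ and $\delta$ all check out). Your handling of (v) via the identity $E_2\circ\Delta=E_1$ is genuinely slicker than the paper's, which writes out the full exponent vector of $[c,b,a][c,a,b]$ and evaluates the second functional by hand; both give $-\delta$.

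There is, however, a gap in your argument for (iii), at the step ``the constancy $m_{i-1}-m_i=1$ forces $\mathbf{w}_i\equiv z_i^{-1}z_{i-1}$, whence $\sum_i\nu(\mathbf{w}_i)$ telescopes to $0$''. The exponents $m_i$ are only defined modulo $p$, and their cyclic differences cannot all equal $1$ as integers: the differences must sum to $0$ around the cycle, whereas $p$ ones would sum to $p$. With the representatives $(-2,-3,\ldots,-(p-1),0,-1)$ there is one index where the integer difference is $1-p$, so there $\mathbf{w}_i\equiv z_i^{-1}s^{-p}z_{i-1}$ and your sum telescopes not to $0$ but to $-\nu(s^p)$. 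This is precisely where the claim that ``only mod-$G'$ data ever enters'' breaks down: $s^p=(a^{\varepsilon}b)^p$ is itself an element of $G'$, and its class modulo $\gamma_3(G)$ matters. One computes $s^p\equiv[b,a]^{\varepsilon\binom{p}{2}}$ modulo $\gamma_3(G)$, so $\nu(s^p)=\varepsilon\binom{p}{2}$, which vanishes in $\F_p$ only because $p$ is odd. This is exactly the point at which the paper's proof invokes ``$p$ is odd'': it computes $\psi([c,a,a])$ to be the constant tuple $([b,a]^{-\varepsilon},\overset{p}{\ldots},[b,a]^{-\varepsilon})$, which lies in $\gamma_p(W(\F_p))\subseteq\gamma_3(W(\F_p))$ precisely because $p\ge 3$. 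So your conclusion in (iii) is correct, but the justification needs the extra observation $\nu(s^p)=0$; as written the telescoping claim is false, and the closing remark attributing the use of $p$ odd solely to $2\delta\ne 0$ misses this second, essential occurrence.
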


\begin{proof}
Throughout the proof, let $g=[b,a,\overset{p-2}{\ldots},a]$.
Recall from \cref{comm bap-2} that
\begin{equation}
\label{psi g}
\begin{split}
\psi(g)
\equiv
(a^{\delta-2\varepsilon}b^{-2},a^{\delta-3\varepsilon}b^{-3},\ldots,a^{\delta-(p-1)\varepsilon}b^{-(p-1)},a^{\delta},
a^{\delta-\varepsilon}b^{-1})
\\
\mod (G' \times \cdots \times G'),
\end{split}
\end{equation}
and take into account that
\begin{equation}
\label{modulus}
G'\times \overset{p}{\cdots} \times G'
=
\psi(\St_G(1)')
=
\psi(\St_G(2)).
\end{equation}

(i)
This follows directly from \cref{LCS G2}.

(ii)
Obviously $[g,b]\in \St_G(1)'=\St_G(2)$.
Let us now see that this commutator does not belong to $[\St_{G_3}(2),G_3]$.
To this purpose, we take the commutator of \eqref{psi g} with $\psi(b)$, getting
\begin{equation}
\label{psi bap-2a}
\begin{split}
\psi([g,b])
\equiv
([b,a]^{-2e_1},[b,a]^{-3e_2},\ldots,[b,a]^{-(p-1)e_{p-2}},1,[b,a]^{\varepsilon-\delta})
\\
\mod (\gamma_3(G)\times \overset{p}{\cdots} \times \gamma_3(G)).
\end{split}
\end{equation}
By working in $W(G_2'/\gamma_3(G_2))$ as explained above, the condition
$[g,b]\not\in [\St_{G_3}(2),G_3]$ is equivalent to
\[
(-2e_1,-3e_2,\ldots,-(p-1)e_{p-2},0,\varepsilon-\delta)\not\in \gamma_2(W(\F_p)).
\]
By \cref{equations for gamma2W(F) and gamma3W(F)}, we need to check that the sum of the components
of this tuple is not $0$ in $\F_p$.
Since
\[
-\sum_{i=1}^{p-2} \, (i+1)e_i
=
-\sum_{i=1}^{p-1} \, (i+1)e_i
=
-\varepsilon-\delta,
\]
the sum that we want is $-2\delta$.
This value is not $0$, since $p$ is odd and $\delta\ne 0$, because $G$ is of FG-type.

(iii)
This time we take the commutator of \eqref{psi g} with $\psi(a)$.
Taking \eqref{modulus} into account, together with the fact that
\[
\gamma_3(G) \times \overset{p}{\cdots} \times \gamma_3(G)
=
\psi(\gamma_3(\st_G(1)))
\le
\psi([\St_G(1)',G])
=
\psi([\St_G(2),G]),
\]
we get
\begin{equation}
\label{comm bap-1}
\begin{split}
\psi([g,a])
&\equiv
(ba^{\varepsilon}[a^{\varepsilon},b^{-1}],ba^{\varepsilon}[a^{\varepsilon},b^{-2}],\ldots,ba^{\varepsilon}[a^{\varepsilon},
b^{-(p-1)}],ba^{\varepsilon})
\\
&\equiv
(ba^{\varepsilon}[b,a]^{\varepsilon},ba^{\varepsilon}[b,a]^{2\varepsilon},\ldots,ba^{\varepsilon}[b,a]^{(p-1)\varepsilon},ba^{\varepsilon})
\mod{\psi([\St_G(2),G])}.
\end{split}
\end{equation}
Taking the commutator with $a$ again, we have
\begin{equation}
\label{psi gaa}
\psi([g,a,a])\equiv ([b,a]^{-\varepsilon},\overset{p}{\ldots},[b,a]^{-\varepsilon})
\mod{\psi([\St_G(2),G,G])}.
\end{equation}
By working in $W(G_2'/\gamma_3(G_2))$ and observing that
\[
(-\varepsilon,\overset{p}{\ldots},-\varepsilon)
\in
\gamma_p( W(\F_p) )
\]
by \cref{gammapW(F)}, it follows that
\[
([b,a]^{-\varepsilon},\overset{p}{\ldots},[b,a]^{-\varepsilon})
\in\psi([\St_{G_3}(2),G_3,\overset{p-1}{\cdots},G_3]).
\]
Since $p$ is odd, from this and \eqref{psi gaa} we conclude that $[g,a,a]\in [\St_{G_3}(2),G_3,G_3]$.

(iv)
By taking the commutator of \eqref{comm bap-1} with $\psi(b)$, we get
\begin{equation}
\label{psi gab}
\psi([g,a,b])
\equiv
([b,a]^{e_1},[b,a]^{e_2},\ldots,[b,a]^{e_{p-1}},[b,a]^{-\varepsilon})
\mod (\gamma_3(G)\times \overset{p}{\cdots} \times \gamma_3(G)).
\end{equation}
Working again in $W(G_2'/\gamma_3(G_2))$, the condition
\[
[g,a,b]\in [\St_{G_3}(2),G_3]\smallsetminus [\St_{G_3}(2),G_3,G_3]
\]
is equivalent to
\[
(e_1,e_2,\ldots,e_{p-1},-\varepsilon) \in \gamma_2(W(\F_p)) \smallsetminus \gamma_3(W(\F_p).
\]
This follows immediately from \cref{equations for gamma2W(F) and gamma3W(F)}, since
\[
e_1+e_2+\cdots+e_{p-1}-\varepsilon = 0
\]
by the definition of $\varepsilon$, and
\[
e_1+2e_2+\cdots+(p-1)e_{p-1} \ne 0,
\]
since $G$ is of FG-type.

(v)
By (ii) and (iv) it is clear that $[g,b,a][g,a,b]\in [\St_{G_3}(2),G_3]$.
Let us see that this commutator is not in $[\St_{G_3}(2),G_3,G_3]$.
To this end, we take the commutator of (\ref{psi bap-2a}) with $\psi(a)$, obtaining
\begin{multline*}
\psi([g,b,a])
\equiv
(
[b,a]^{2e_1+\varepsilon-\delta},
[b,a]^{3e_2-2e_1},\ldots,[b,a]^{(p-1)e_{p-2}-(p-2)e_{p-3}},
\\
[b,a]^{-(p-1)e_{p-2}},
[b,a]^{\delta-\varepsilon})
\mod (\gamma_3(G)\times \overset{p}{\cdots} \times \gamma_3(G)).
\end{multline*}
By combining this congruence with \eqref{psi gab}, we get that the exponents of $[b,a]$
in the tuple $\psi([g,b,a][g,a,b])$ are given by the vector
\begin{equation}
\label{vector gba gab}
\ \ \ \ 
(3e_1+\varepsilon-\delta,4e_2-2e_1,5e_3-3e_2,\ldots,-(p-2)e_{p-3}, e_{p-1}-(p-1)e_{p-2},\delta-2\varepsilon).
\end{equation}
Working in $W(G_2'/\gamma_3(G_2))$, it follows that $[g,b,a][g,a,b]\not\in [\st_{G_3}(2),G_3,G_3]$
if and only if the vector in \eqref{vector gba gab} does not lie in $\gamma_{3}(W(\F_p))$.
By \cref{equations for gamma2W(F) and gamma3W(F)}, this is equivalent to
\[
3e_1+\varepsilon-\delta + \sum_{i=2}^{p-1} \, i \big( (i+2)e_i - ie_{i-1} \big)
=
\varepsilon-\delta - \sum_{i=1}^{p-1} \, e_i
=
-\delta
\]
being non-zero in $\F_p$.
Now this is true, since $G$ is of FG-type.
\end{proof}

\begin{remark}
\label{comm not FG-type}
Note that (i) and (iii) hold for an arbitrary non-periodic GGS-group, not only for groups of FG-type.
However, if $G$ is not of FG-type then the same proof above shows that
\[
[b,a,\overset{p-2}{\ldots},a,b]\in [\St_{G_3}(2),G_3]
\]
and
\[
[b,a,\overset{p-1}{\ldots},a,b]\in [\St_{G_3}(2),G_3,G_3].
\]
\end{remark}

\vspace{5pt}

Now we can start describing the lower central series of $G_3$.
We start with the terms $\gamma_i(G_3)$ with $i\ge p+1$.

\begin{lemma}
\label{gamma_p+i}
Let $G$ be a GGS-group of FG-type.
Then for every $i\ge 1$, we have
\begin{equation}
\label{gamma p+i}
\gamma_{p+i}(G_3)
=
[\St_{G_3}(2),G_3,\overset{i}{\ldots},G_3]
=
\langle [b,a^{\varepsilon}b,\overset{p+i-1}{\ldots},a^{\varepsilon}b] \rangle \gamma_{p+i+1}(G_3).
\end{equation}
Furthermore,
\begin{equation}
\label{st(2)}
\St_{G_3}(2) = \langle [b,a^{\varepsilon}b,\overset{p-2}{\ldots},a^{\varepsilon}b,b] \rangle \gamma_{p+1}(G_3).
\end{equation}
\end{lemma}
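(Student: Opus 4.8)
The plan is to reduce everything to the single equality $\gamma_{p+1}(G_3)=[\St_{G_3}(2),G_3]$, from which both displayed formulas follow by propagating along the uniserial action of \cref{uniserial}. Since $\gamma_{p+1}(G_2)=1$ by \cref{LCS G2}, the term $\gamma_{p+1}(G_3)$ lies in $\St_{G_3}(2)=\ker(G_3\to G_2)$; being normal in $G_3$, \cref{uniserial} forces $\gamma_{p+1}(G_3)=[\St_{G_3}(2),G_3,\overset{k}{\ldots},G_3]$ for some $k\ge0$. To show $k=1$ I would bound $k$ from both sides. For $\gamma_{p+1}(G_3)\le[\St_{G_3}(2),G_3]$ (which rules out $k=0$), write $\gamma_p(G_3)=\langle c_0\rangle\,(\gamma_p(G_3)\cap\St_{G_3}(2))$ with $c_0=[b,a,\overset{p-1}{\ldots},a]$, using that the image of $\gamma_p(G_3)$ in $G_2$ is $\gamma_p(G_2)=\langle\overline{c_0}\rangle$ by \cref{LCS G2}; then $\gamma_{p+1}(G_3)=[\gamma_p(G_3),G_3]$ is generated by $[c_0,a]$, $[c_0,b]$ and by commutators of $\gamma_p(G_3)\cap\St_{G_3}(2)$ with $a,b$, all of which lie in $[\St_{G_3}(2),G_3]$ by \cref{position commutators}(iii),(iv) and the normality of $\St_{G_3}(2)$. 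Conversely, \cref{position commutators}(iv) exhibits the weight-$(p+1)$ element $[b,a,\overset{p-1}{\ldots},a,b]\in\gamma_{p+1}(G_3)$ lying in $[\St_{G_3}(2),G_3]\smallsetminus[\St_{G_3}(2),G_3,G_3]$, which is incompatible with $k\ge2$. Hence $k=1$, and iterating $[-,G_3]$ gives $\gamma_{p+i}(G_3)=[\St_{G_3}(2),G_3,\overset{i}{\ldots},G_3]$ for all $i\ge1$ by induction.

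For the generator statements I would exploit that $a^{\varepsilon}b$ has order $p^2$ in $G_2$, so by \cref{uniserial}(ii) the cyclic group $\langle a^{\varepsilon}b\rangle$ acts uniserially on $\St_{G_3}(2)$. Since $[\St_{G_3}(2),\langle a^{\varepsilon}b\rangle]$ and $[\St_{G_3}(2),G_3]$ both have index $p$ in $\St_{G_3}(2)$ and the former is contained in the latter, they coincide, and the same index comparison gives $\gamma_{p+i}(G_3)=[\St_{G_3}(2),a^{\varepsilon}b,\overset{i}{\ldots},a^{\varepsilon}b]$ for all $i$; by \eqref{comm H with <g>} the map $[-,a^{\varepsilon}b]$ then induces isomorphisms between consecutive factors. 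Consequently everything reduces to two non-vanishing facts: that $h:=[b,a^{\varepsilon}b,\overset{p-2}{\ldots},a^{\varepsilon}b,b]\notin\gamma_{p+1}(G_3)$, and that $x(p+1)\notin\gamma_{p+2}(G_3)$. Granting these, $h$ generates the cyclic group $\St_{G_3}(2)/\gamma_{p+1}(G_3)$, which is the ``furthermore'' clause, while propagating $x(p+1)$ by $[-,a^{\varepsilon}b]$ shows that $x(p+i)=[x(p+1),a^{\varepsilon}b,\overset{i-1}{\ldots},a^{\varepsilon}b]$ generates $\gamma_{p+i}(G_3)/\gamma_{p+i+1}(G_3)$ for every $i\ge1$.

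That $h$ and $x(p+1)$ lie in $\St_{G_3}(2)$ is easy: $x(p+1)$ has weight $p+1$ and so dies in the class-$p$ group $G_2$, while for $h=[x(p-1),b]$ the image $\overline{x(p-1)}\in\gamma_{p-1}(G_2)\le\St_{G_2}(1)$ commutes with $\overline{b}\in\St_{G_2}(1)$ in the abelian base group. The substantive point, and the main obstacle, is the two non-vanishing statements, which I would settle by the $\psi$-computation of \cref{position commutators}. By \eqref{psi stG3(2)/gamma3(stG3(1))} the quotients $[\St_{G_3}(2),G_3,\overset{j}{\ldots},G_3]$ correspond under $\psi$ to the terms $\gamma_{j+1}(W(\F_p))$ inside $B(G_2'/\gamma_3(G_2))=B(\F_p)$, so membership is detected by the linear equations of \cref{equations for gamma2W(F) and gamma3W(F)}. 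Computing $\psi(h)$ and $\psi(x(p+1))$ modulo $G'\times\cdots\times G'$ (extending \cref{comm bap-2} to the generator $a^{\varepsilon}b$) and writing the components as powers of $[b,a]$, the condition $h\notin[\St_{G_3}(2),G_3]$ becomes the non-vanishing of the sum of these exponents, and $x(p+1)\notin[\St_{G_3}(2),G_3,G_3]$ becomes the non-vanishing of the corresponding weighted sum. I expect the FG-type hypothesis $\delta\ne0$ to be exactly what forces both sums to be nonzero, mirroring the way $\delta$ enters parts (ii), (iv) and (v) of \cref{position commutators}.
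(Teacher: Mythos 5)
Your reduction is correct and, for the first chain of equalities in \eqref{gamma p+i}, essentially identical to the paper's: you pin down $\gamma_{p+1}(G_3)=[\St_{G_3}(2),G_3]$ by squeezing it between $[\St_{G_3}(2),G_3]$ and $[\St_{G_3}(2),G_3,G_3]$ using parts (iii) and (iv) of \cref{position commutators} together with the uniseriality from \cref{uniserial}, and the propagation $\gamma_{p+i}(G_3)=[\St_{G_3}(2),G_3,\overset{i}{\ldots},G_3]$ and the transfer of generators along $[\,\cdot\,,a^{\varepsilon}b]$ are exactly as in the paper.

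The gap lies in the two non-vanishing facts that you yourself flag as the substantive point: $x(p+1)\notin\gamma_{p+2}(G_3)$ and $[b,a^{\varepsilon}b,\overset{p-2}{\ldots},a^{\varepsilon}b,b]\notin\gamma_{p+1}(G_3)$. You do not prove them; you propose a direct $\psi$-computation of the iterated $a^{\varepsilon}b$-commutators, ``extending \cref{comm bap-2}'', and state that you \emph{expect} $\delta\ne 0$ to make the relevant sums non-zero. That computation is not a routine extension: unlike $a$, the element $a^{\varepsilon}b$ has non-trivial sections, so commutation with $\psi(a^{\varepsilon}b)=\sigma^{\varepsilon}\psi(b)$ does not reduce to multiplication by $\overline{X-1}$ as in \cref{comm a with sigma p-2 times}, and the directed part contributes corrections at every one of the iterations. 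The paper avoids this entirely: it expands $[b,a^{\varepsilon}b,\overset{p}{\ldots},a^{\varepsilon}b]$ multilinearly modulo $\gamma_{p+2}(G_3)$, discards all terms with three or more occurrences of $b$ and all terms already known to lie in $[\St_{G_3}(2),G_3,G_3]$ by part (iii), and is left with $\big([b,a,\overset{p-2}{\ldots},a,b,a]\,[b,a,\overset{p-1}{\ldots},a,b]\big)^{\varepsilon^{p-1}}$; the fact that this \emph{product} --- not either factor alone --- avoids $[\St_{G_3}(2),G_3,G_3]$ is precisely part (v) of \cref{position commutators}, where a cancellation leaves exactly $-\delta$. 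Your plan does not engage with this product structure, and parts (ii) and (v), which are the inputs actually needed here, go unused. As written the decisive step is asserted rather than proved; the repair is to replace your direct $\psi$-computation by the multilinear expansion modulo $\gamma_{p+2}(G_3)$ (and, for $[b,a^{\varepsilon}b,\overset{p-2}{\ldots},a^{\varepsilon}b,b]$, the analogous expansion modulo $\gamma_{p+1}(G_3)$, which reduces it to $[b,a,\overset{p-2}{\ldots},a,b]^{\varepsilon^{p-2}}$ and part (ii)).
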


\begin{proof}
By \cref{LCS G2} we have
\[
\gamma_p(G_3) \equiv  \langle [b,a,\overset{p-1}{\ldots},a] \rangle \mod \St_{G_3}(2).
\]
It readily follows that
\[
\gamma_{p+1}(G_3) \equiv \langle [b,a,\overset{p}{\ldots},a], [b,a,\overset{p-1}{\ldots},a,b] \rangle^{G_3}
\mod [\St_{G_3}(2),G_3].
\]
From (iii) and (iv) of \cref{position commutators}, $\gamma_{p+1}(G_3)$ is contained in
$[\St_{G_3}(2),G_3]$ and contains the element
\[
[b,a,\overset{p-1}{\ldots},a,b] \in [\St_{G_3}(2),G_3]\smallsetminus [\St_{G_3}(2),G_3,G_3].
\]
The uniseriality of the action of $G_3$ on $\St_{G_3}(2)$ then implies that
\begin{equation}
\label{gamma p+1}
\gamma_{p+1}(G_3) = [\St_{G_3}(2),G_3] = \langle [b,a,\overset{p-1}{\ldots},a,b] \rangle [\St_{G_3}(2),G_3,G_3],
\end{equation}
and consequently also that
\[
\gamma_{p+i}(G_3) = [\St_{G_3}(2),G_3,\overset{i}{\ldots},G_3]
\]
for every $i\ge 1$.
Since $|\St_{G_3}(2):\gamma_3(\St_G(1))|=p^{\,p}$, it follows in particular that $\gamma_3(\St_G(1))=\gamma_{2p}(G_3)$.

On the other hand,
\begin{equation}
\label{comm with a epsilon b}
[b,a^{\varepsilon}b,\overset{p}{\ldots},a^{\varepsilon}b]
\equiv
[b,a,\overset{p}{\ldots},a]^{\varepsilon^p} \,
\prod_{i=0}^{p-1} \, [b,a,\overset{i}{\ldots},a,b,a,\overset{p-i-1}{\ldots},a]^{\varepsilon^{p-1}}
\mod \gamma_{p+2}(G_3),
\end{equation}
since commutators of length $p+1$ are multilinear modulo $\gamma_{p+2}(G_3)$ and any commutator with at least three
occurrences of $b$ lies in $\gamma_3(\St_G(1))$.
Also all elements $[b,a,\overset{p}{\ldots},a]$ and
$[b,a,\overset{i}{\ldots},a,b,a,\overset{p-i-1}{\ldots},a]$ for $0\le i\le p-3$
belong to $[\St_{G_3}(2),G_3,G_3] = \gamma_{p+2}(G_3)$.
Thus \eqref{comm with a epsilon b} reduces to
\[
[b,a^{\varepsilon}b,\overset{p}{\ldots},a^{\varepsilon}b]
\equiv
\big(
[b,a,\overset{p-2}{\ldots},a,b,a]
\,
[b,a,\overset{p-1}{\ldots},a,b]
\big)^{\varepsilon^{p-1}}
\mod \gamma_{p+2}(G_3).
\]
Since $\varepsilon\ne 0$ in $\F_p$, we conclude from (v) in \cref{position commutators} that
\[
\gamma_{p+1}(G_3) = \langle [b,a^{\varepsilon}b,\overset{p}{\ldots},a^{\varepsilon}b] \rangle \gamma_{p+2}(G_3).
\]
Now \eqref{gamma p+i} follows from uniseriality and from \cref{uniserial}.

Finally, since $|\St_{G_3}(2):\gamma_{p+1}(G_3)|=p$, we obtain \eqref{st(2)} from (ii) of
\cref{position commutators} arguing similarly to the last paragraph.
\end{proof}

In the previous lemma, we could have given  more easily a generator of each of the sections
$\gamma_{p+i}(G_3)/\gamma_{p+i+1}(G_3)$ by using commutators of the form
\[
[b,a,\overset{p-2}{\ldots},a,b,a^{\varepsilon}b,\overset{i}{\ldots},a^{\varepsilon}b].
\]
However, we have taken the extra effort of replacing this $(p+i)$-fold commutator with
\[
[b,a^{\varepsilon}b,\overset{p+i-1}{\ldots},a^{\varepsilon}b]
\]
for the sake of a more uniform writing of these generators, not only for $G_3$ but also for an
arbitrary congruence quotient $G_n$, as we will prove in \cref{sec:proof theorem A}.

\vspace{8pt}

Now it is easy to give the final result about the lower central series of $G_3$.

\begin{theorem}
\label{LCS G3}
Let $G$ be GGS-group of FG-type.
Then the following hold:
\begin{enumerate}
\item
The indices between consecutive terms of the lower central series of $G_3$ are given as follows:
\begin{align*}
|\gamma_i(G_3):\gamma_{i+1}(G_3)|
=
\begin{cases}
p^2 \quad &\text{if $i\in \{1,p\}$},\\
p \quad &\text{if $i\in \{2,\dots,p-1,p+1\dots,p^2-1\}$},\\
1 \quad &\text{if $i\geq p^2$}.
\end{cases}
\end{align*}
In particular, $G_3$ has nilpotency class $p^2-1$.
\item
Furthermore, for $i\in\{2,\dots,p-1,p+1\dots,p^2-1\}$, we have
\[
\gamma_i(G_3) = \langle [b,a^{\varepsilon}b,\overset{i-1}{\ldots},a^{\varepsilon}b] \rangle \gamma_{i+1}(G_3),
\]
and, on the other hand,
\[
\gamma_p(G_3) =
\langle [b,a^{\varepsilon}b,\overset{p-1}{\ldots},a^{\varepsilon}b], [b,a^{\varepsilon}b,\overset{p-2}{\ldots},a^{\varepsilon}b,b] \rangle
\gamma_{p+1}(G_3).
\]
\item
For all $i\ge p+1$ we have $\psi(\gamma_{i}(G_3))=\gamma_{i+1}(W(G_2))$.
\end{enumerate}
\end{theorem}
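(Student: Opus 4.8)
The plan is to bootstrap everything from the description of $G_2$ in \cref{LCS G2}, the behaviour of $\St_{G_3}(2)$ recorded in \cref{gamma_p+i}, and the uniseriality of the action of $G_3$ on $\St_{G_3}(2)$ from \cref{uniserial}. The starting observation is that $\St_{G_3}(2)\subseteq\gamma_p(G_3)$: the generator of $\St_{G_3}(2)$ modulo $\gamma_{p+1}(G_3)$ exhibited in \eqref{st(2)} is a commutator of weight $p$, and $\gamma_{p+1}(G_3)\subseteq\gamma_p(G_3)$. Consequently $\St_{G_3}(2)\subseteq\gamma_i(G_3)$ for every $i\le p$, so that $\gamma_i(G_3)/\St_{G_3}(2)=\gamma_i(G_2)$ and hence $\gamma_i(G_3)/\gamma_{i+1}(G_3)\cong\gamma_i(G_2)/\gamma_{i+1}(G_2)$ whenever $i+1\le p$. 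This yields the indices $p^2$ for $i=1$ and $p$ for $2\le i\le p-1$, together with the generators in (ii) for those $i$, directly from \cref{LCS G2}.

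For $i=p$ I would sandwich $\St_{G_3}(2)$ between $\gamma_p(G_3)$ and $\gamma_{p+1}(G_3)$: on one side $\gamma_p(G_3)/\St_{G_3}(2)=\gamma_p(G_2)$ has order $p$, and on the other $\gamma_{p+1}(G_3)=[\St_{G_3}(2),G_3]$ has index $p$ in $\St_{G_3}(2)$ by uniseriality, so $|\gamma_p(G_3):\gamma_{p+1}(G_3)|=p^2$; the two generators in (ii) are then the weight-$p$ commutator inherited from $G_2$ and the generator of $\St_{G_3}(2)$ modulo $\gamma_{p+1}(G_3)$ coming from \eqref{st(2)}. For $i\ge p+1$ everything is already packaged in \cref{gamma_p+i}: writing $\gamma_{p+i}(G_3)=[\St_{G_3}(2),G_3,\overset{i}{\ldots},G_3]$ and recalling $|\St_{G_3}(2)|=p^{\,p^2-p}$, uniseriality produces a chain with consecutive indices $p$ reaching $1$ exactly at $i=p^2-p$, i.e. index $p$ for $p+1\le i\le p^2-1$ and index $1$ afterwards, giving nilpotency class $p^2-1$; the generators in (ii) are the reindexed generators of \eqref{gamma p+i}. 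As a consistency check the indices multiply to $p^{\,p^2+1}=|G_3|$.

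For part (iii) I would use the injective embedding $\psi\colon G_3\hookrightarrow W(G_2)$ together with $\psi(\St_{G_3}(2))=B(G_2')$, which follows from \eqref{psi stG(2)}. Since $B(G_2')=\gamma_{p+1}^*(W(G_2))$ by \eqref{gamma_jp+1 W(G)}, applying $\psi$ to $\gamma_{p+i}(G_3)=[\St_{G_3}(2),G_3,\overset{i}{\ldots},G_3]$ gives $\psi(\gamma_{p+i}(G_3))=[B(G_2'),\psi(G_3),\overset{i}{\ldots},\psi(G_3)]$. The main obstacle is precisely that $\psi(G_3)$ is a \emph{proper} subgroup of $W(G_2)$, so this is not literally a term of the lower central series of $W(G_2)$. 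I would sidestep this by proving only the inclusion $\psi(\gamma_{p+i}(G_3))\subseteq\gamma_{p+1+i}(W(G_2))$ by induction on $i\ge 1$: the base case is $[B(G_2'),W(G_2)]=\gamma_{p+2}(W(G_2))$, and the step uses $\psi(\gamma_{p+i+1}(G_3))=[\psi(\gamma_{p+i}(G_3)),\psi(G_3)]\subseteq[\gamma_{p+1+i}(W(G_2)),W(G_2)]=\gamma_{p+2+i}(W(G_2))$. I would then close the gap by comparing orders: the $G_3$-side computation above gives $|\psi(\gamma_{p+i}(G_3))|=p^{\,p^2-p-i}$, while \cref{LCS of W(G)} and \cref{LCS of W(G) detail} give $|\gamma_{p+1+i}(W(G_2))|=p^{\,p^2-p-i}$ as well, since $\gamma_{p+1}(W(G_2))=B(G_2')$ has order $p^{\,p^2-p}$ and all subsequent indices equal $p$. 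Equal orders together with the inclusion force equality, which is exactly $\psi(\gamma_i(G_3))=\gamma_{i+1}(W(G_2))$ for every $i\ge p+1$.
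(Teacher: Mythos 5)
Your argument is correct and follows essentially the same route as the paper: parts (i) and (ii) are obtained exactly as in the text, by combining \cref{LCS G2} with the observation from \eqref{st(2)} that $\St_{G_3}(2)\le\gamma_p(G_3)$ and with the uniserial chain of \cref{gamma_p+i}. For part (iii) you push $\gamma_i(G_3)$ forward into $\gamma_{i+1}(W(G_2))$ and close the gap by an order count, whereas the paper pulls $\gamma_{i+1}(W(G_2))$ back along $\psi$ and identifies the preimage via uniseriality; both versions rest on the same index computations on either side of $\psi$, so this is only a cosmetic variation.
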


\begin{proof}
We start by proving (i) and (ii).
These immediately follow from \cref{gamma_p+i} for $i\ge p+1$, using the uniseriality of the action of
$G_3$ on $\St_{G_3}(2)$ to ensure that the indices between consecutive terms are equal to $p$.
Also, since $|\St_{G_3}(2)|=p^{\,p^2-p}$, it follows that the first trivial term of the lower central series
of $G_3$ is $\gamma_{p^2}(G_3)$, and $G_3$ is nilpotent of class $p^2-1$.

Regarding the sections $\gamma_i(G_3)/\gamma_{i+1}(G_3)$ with $i\le p-1$, observe that \eqref{st(2)}
implies that $\St_{G_3}(2)\le \gamma_p(G_3)$.
Hence (i) and (ii) follow immediately from \cref{LCS G2} in this case.
Now the result for $\gamma_p(G_3)/\gamma_{p+1}(G_3)$ is a direct consequence of \cref{LCS G2} and
\eqref{st(2)}.

Let us finally prove (iii).
We have $\gamma_{p+1}(W(G_2))=B(G_2')=\psi(\St_{G_3}(2))$, where the first equality follows from
\cref{LCS of W(G)} and the second from \eqref{modulus}.
On the other hand, since $|\gamma_j(G_2):\gamma_{j+1}(G_2)|=p$ for $j=2,\ldots,p$, it follows from (ii)
of \cref{LCS of W(G) detail} that $|\gamma_i(W(G_2)):\gamma_{i+1}(W(G_2))|=p$ for $i=p+1,\ldots,p^2$.
Also $\psi^{-1}(\gamma_{i+1}(W(G_2)))$ is normal in $G_3$ and properly contained in $\St_{G_3}(2)$
for $i\ge p+1$.
Now \eqref{gamma p+i} and the uniseriality of the action of $G_3$ on $\St_{G_3}(2)$ yield that
$\psi^{-1}(\gamma_{i+1}(W(G_2)))=\gamma_i(G_3)$ for $i\ge p+1$.
This completes the proof.
\end{proof}

We end this section by showing that \cref{LCS G3} is not valid for a GGS-group that is not of FG-type.
As a consequence, also Theorem A fails in this case.

\begin{proposition}
\label{indices not FG-type}
Let $G$ be a non-periodic GGS-group that is not of FG-type.
Then $|\gamma_i(G_3):\gamma_{i+1}(G_3)|\ge p^2$ for some $i\in\{2,\ldots,p-1\}$.
\end{proposition}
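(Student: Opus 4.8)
The plan is to compare the lower central series of $G_3$ with that of the quotient $G_2=G_3/\St_{G_3}(2)$, keeping track of how the elementary abelian layer $S:=\St_{G_3}(2)$ is spread among the terms $\gamma_i(G_3)$. Since $G$ is non-periodic we have $\varepsilon\ne 0$, and since $G$ is not of FG-type this forces $\delta=0$; moreover $G_2\cong C_p\wr C_p$ by \cref{LCS G2}, so $|\gamma_i(G_2):\gamma_{i+1}(G_2)|=p$ for $2\le i\le p$. First I would record the reduction. For $i\ge 2$ factor
\[
|\gamma_i(G_3):\gamma_{i+1}(G_3)|=|\gamma_i(G_3)S:\gamma_{i+1}(G_3)S|\cdot|\gamma_i(G_3)\cap S:\gamma_{i+1}(G_3)\cap S|.
\]
The first factor equals $|\gamma_i(G_2):\gamma_{i+1}(G_2)|=p$ for $2\le i\le p$. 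For the second factor, by \cref{uniserial} the $G_3$-invariant subgroups of $S$ form the uniserial chain $[S,G_3,\overset{k}{\ldots},G_3]$ with consecutive indices $p$; since each $\gamma_i(G_3)\cap S$ is one of these and $[\gamma_i(G_3)\cap S,G_3]\le\gamma_{i+1}(G_3)\cap S$, the second factor is $1$ or $p$. Because $|G_2:G_2'|=p^2=|G_3:\gamma_2(G_3)|$ we get $S\le G_3'$, whence $\gamma_2(G_3)\cap S=S$; therefore the second factor equals $p$ for some $i\in\{2,\ldots,p-1\}$ \emph{precisely} when $\gamma_p(G_3)\cap S\subsetneq S$, i.e.\ when $S\not\le\gamma_p(G_3)$. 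So the proposition reduces to establishing $S\not\le\gamma_p(G_3)$.

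To detect where $S$ sits I would introduce the \emph{section-product} map: for $z\in S$ write $\psi(z)=(z_0,\ldots,z_{p-1})\in B(G_2')$ and set $\Phi(z)=z_0\cdots z_{p-1}\bmod\gamma_3(G_2)$, an element of $G_2'/\gamma_3(G_2)\cong\F_p$ (generated by $[b,a]$). Since commutators of elements of $G_2'$ lie in $\gamma_4(G_2)\le\gamma_3(G_2)$, the map $\Phi$ is a homomorphism on $S$; and because conjugation by $a$ merely cyclically shifts the components (by \eqref{f conjugate a}) while conjugation by $b$ conjugates them componentwise inside $G_2$, both act trivially on the central quotient $G_2'/\gamma_3(G_2)$, so $\Phi$ is $G_3$-invariant. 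Hence $\Phi$ kills $[S,G_3]$ and, via the identification \eqref{psi stG3(2)/gamma3(stG3(1))} together with \cref{equations for gamma2W(F) and gamma3W(F)}, induces an isomorphism $S/[S,G_3]\to\F_p$ (it is exactly the sum-of-components map). In particular $\ker\Phi=[S,G_3]$, so $S\not\le\gamma_p(G_3)$ is equivalent to $\Phi(\gamma_p(G_3)\cap S)=0$.

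Next I would compute this image. Writing $c=[b,a,\overset{p-1}{\ldots},a]$, parts (iii) of \cref{position commutators} and the Remark \cref{comm not FG-type} give $[c,a],[c,b]\in[S,G_3,G_3]$, so $\gamma_{p+1}(G_3)=[\gamma_p(G_3),G_3]\le[S,G_3]=\ker\Phi$. Thus $\Phi(\gamma_p(G_3)\cap S)$ is generated by the values $\Phi(u)$ on the weight-$p$ commutators $u$ in $a,b$ that lie in $S$, namely those with at least two occurrences of $b$. For such a left-normed $u=[v,\ell]$ with $\ell\in\{a,b\}$: if $\ell=a$, or if $\ell=b$ and $v$ still has two $b$'s, then $v\in S$ and $G_3$-invariance yields $\Phi(u)=\Phi(v^{-1}v^{\ell})=0$; the only surviving case is $u=[b,a,\overset{p-2}{\ldots},a,b]$, for which the computation in part (ii) of \cref{position commutators} gives $\Phi(u)=-2\delta$. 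As $\delta=0$, every $\Phi(u)$ vanishes, so $\gamma_p(G_3)\cap S\le[S,G_3]$, i.e.\ $S\not\le\gamma_p(G_3)$, and the proposition follows.

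I expect the main obstacle to be the middle step: verifying that $\Phi$ is a genuinely well-defined, $G_3$-invariant homomorphism that detects exactly the top quotient $S/[S,G_3]$, and confirming that the unique non-vanishing contribution is the commutator already analysed in part (ii) of \cref{position commutators}. Once the $G_3$-invariance of $\Phi$ is secured, the collapse of every relevant weight-$p$ commutator to either a $\Phi$-trivial one or to $[b,a,\overset{p-2}{\ldots},a,b]$ is purely formal, and the hypothesis $\delta=0$ enters only through that single value $-2\delta$.
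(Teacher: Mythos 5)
Your argument is correct, but it is organised quite differently from the paper's. The paper proceeds by contradiction: assuming $|\gamma_i(G_3):\gamma_{i+1}(G_3)|=p$ for all $i\in\{2,\ldots,p-1\}$, it deduces from \cref{LCS G2} that $\gamma_p(G_3)$ is generated modulo $\gamma_{p+1}(G_3)$ by $[b,a,\overset{p-1}{\ldots},a]$ and $[b,a,\overset{p-2}{\ldots},a,b]$, so that $|\gamma_p(G_3):\gamma_{p+1}(G_3)|\le p^2$; it then uses (iii) of \cref{position commutators} and \cref{comm not FG-type} to force $\gamma_{p+1}(G_3)\le[\St_{G_3}(2),G_3,G_3]$, and derives a contradiction by comparing the resulting bound $|G_3:\gamma_{p+1}(G_3)|\ge p^{\,p+3}$ with the bound $p^{\,p+2}$ coming from the assumption. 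You instead argue directly: you reduce the statement to the structural assertion $\St_{G_3}(2)\not\le\gamma_p(G_3)$ via the index factorisation and uniseriality, and verify it with the $G_3$-invariant ``sum of sections'' homomorphism $\Phi$ whose kernel is $[\St_{G_3}(2),G_3]$ --- which is the $\gamma_2(W(\F_p))$ membership criterion of \cref{equations for gamma2W(F) and gamma3W(F)} in disguise --- checking that every weight-$p$ generator of $\gamma_p(G_3)\cap\St_{G_3}(2)$ is killed by $\Phi$, the only non-formal contribution being $\Phi([b,a,\overset{p-2}{\ldots},a,b])=-2\delta=0$. Both proofs consume exactly the same computational inputs ((ii) and (iii) of \cref{position commutators} together with \cref{comm not FG-type}); yours buys the sharper intermediate conclusion $\gamma_p(G_3)\cap\St_{G_3}(2)\le[\St_{G_3}(2),G_3]$, at the cost of the step you state most tersely, namely that $\Phi(\gamma_p(G_3)\cap\St_{G_3}(2))$ is generated by the values of $\Phi$ on weight-$p$ commutators with at least two occurrences of $b$. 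That step does go through, but it deserves a sentence: since $[c,a],[c,b]\in[\St_{G_3}(2),G_3]$ for $c=[b,a,\overset{p-1}{\ldots},a]$, the element $c$ becomes central modulo $\ker\Phi$, so $\gamma_p(G_3)$ reduces modulo $\ker\Phi$ to the central subgroup generated by $c$ and the weight-$p$ commutators lying in $\St_{G_3}(2)$, and intersecting with $\St_{G_3}(2)$ discards exactly the $\langle c\rangle$ factor because $c\notin\St_{G_3}(2)$ and $c^p\in\gamma_{p+1}(G_3)\le\ker\Phi$.
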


\begin{proof}
Suppose for a contradiction that
\begin{equation}
\label{assumption indices p}
|\gamma_i(G_3):\gamma_{i+1}(G_3)| = p
\qquad
\text{for all $i=2,\ldots,p-1$.}
\end{equation}
In particular, by (ii) of \cref{LCS G2}, we have
$\gamma_{p-1}(G_3) = \langle [b,a,\overset{p-2}{\ldots},a] \rangle \gamma_p(G_3)$,
and consequently
\[
\gamma_p(G_3) = \langle [b,a,\overset{p-1}{\ldots},a], [b,a,\overset{p-2}{\ldots},a,b] \rangle \gamma_{p+1}(G_3).
\]
In particular, we have
\begin{equation}
\label{index at most p^2}
|\gamma_p(G_3):\gamma_{p+1}(G_3)| \le p^2,
\end{equation}
since the exponent of $\gamma_p(G_3)/\gamma_{p+1}(G_3)$ divides $p$.

Now set $N=[\St_{G_3}(2),G_3,G_3]$.
By \cref{comm not FG-type}, $[b,a,\overset{p-2}{\ldots},a,b]$ is central in $G_3$ modulo $N$.
Hence
\[
\gamma_{p+1}(G_3) \equiv \langle [b,a,\overset{p}{\ldots},a], [b,a,\overset{p-1}{\ldots},a,b] \rangle \gamma_{p+2}(G_3)
\mod N.
\]
Now, by (iii) of \cref{position commutators} and \cref{comm not FG-type}, both 
$[b,a,\overset{p}{\ldots},a]$ and $[b,a,\overset{p-1}{\ldots},a,b]$ belong to $N$.
Thus $\gamma_{p+1}(G_3) \equiv \gamma_{p+2}(G_3) \mod N$ and, since $G_3$ is a finite $p$-group, this implies that
$\gamma_{p+1}(G_3)\le N$.
Now, on the one hand,
\[
|G_3:\gamma_{p+1}(G_3)| \ge |G_3:N| = |G_3:\St_{G_3}(2)| \cdot |\St_{G_3}(2):[\St_{G_3}(2),G_3,G_3]|
= p^{\,p+3}
\]
while, on the other hand, by the standing assumption \eqref{assumption indices p},
\[
|G_3:\gamma_{p+1}(G_3)| = \prod_{i=1}^p \, |\gamma_i(G_3):\gamma_{i+1}(G_3)|
= p^{\,p} \, |\gamma_p(G_3):\gamma_{p+1}(G_3)|.
\]
It follows that $|\gamma_p(G_3):\gamma_{p+1}(G_3)|\ge p^3$, which is a contradiction by
\eqref{index at most p^2}.
This proves the result.
\end{proof}

\section{Proof of Theorem A}
\label{sec:proof theorem A}

In this section we prove the main theorem of the paper, describing the lower central series of
a GGS-group $G$ of FG-type.
As already explained in the introduction, it relies on the solution of the same problem for the
congruence quotients $G_n$.

\vspace{7pt}

In the remainder, we denote by $c(n)$ the nilpotency class of $G_n$.
From the previous section, we have $c(1)=1$, $c(2)=p$ and $c(3)=p^2-1$.
We also let $\ell(m)$, $r(m)$, $x(i)$, and $y_j(i)$ be as defined in the introduction. 
Note that, by \cref{LCS G3}, we have
\[
\gamma_i(G_3) = \langle x(i) \rangle \, \gamma_{i+1}(G_3)
\quad
\text{for $2\le i\le c(3)$, $i\ne p$,}
\]
in which case $|\gamma_i(G_3):\gamma_{i+1}(G_3)|=p$, and 
\[
\gamma_p(G_3) = \langle x(p), y_p(p) \rangle \, \gamma_{p+1}(G_3),
\]
with $|\gamma_p(G_3):\gamma_{p+1}(G_3)|=p^2$.

\vspace{8pt}

The proof of Theorem A will easily follow from our next theorem, which generalises the
results about $G_3$ in the last paragraph to an arbitrary congruence quotient $G_n$.
More precisely, we determine the nilpotency class of $G_n$, and we show that the index
$|\gamma_i(G_n):\gamma_{i+1}(G_n)|$ is always $p$ or $p^2$ and that the
quotient $\gamma_i(G_n)/\gamma_{i+1}(G_n)$ can always be generated either by $x(i)$ alone or by
$x(i)$ together with an element of the form $y_j(i)$.
In both of the last results, which of the two possibilities happens will be determined by the position of $i$
with respect to the sequences $\{\ell(m)\}_{m\ge 0}$ and $\{r(m)\}_{m\ge 0}$.

\begin{theorem}
\label{main for quotients}
Let $G$ be a GGS-group of FG-type, and let $n\in\N$, with $n\ge 3$.
Then
\[
c(n)=p^{n-1}-p^{n-3}-\cdots-p-1
\]
and the following hold:
\begin{enumerate}
\item 
If $\ell(m)\le i < r(m)$ for some $m\in \{0,\ldots,n-2\}$, then  
\[
|\gamma_i(G_n):\gamma_{i+1}(G_n)|=p^2,
\]
and $\gamma_i(G_n)=\langle x(i), y_{\ell(m)}(i) \rangle \gamma_{i+1}(G_n)$.
\item 
If $r(m)\le i < \ell(m+1)$ for some $m\in \{0,\ldots,n-3\}$, or if $r(n-2)\le i \le c(n)$ then
\[
|\gamma_i(G_n):\gamma_{i+1}(G_n)|=p,
\]
and $\gamma_i(G_n)=\langle x(i)\rangle\gamma_{i+1}(G_n)$.
\end{enumerate}
\end{theorem}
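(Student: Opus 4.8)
The plan is to argue by induction on $n$, the base case $n=3$ being \cref{LCS G3}, which indeed yields $c(3)=p^2-1$ in accordance with the displayed closed formula. For the inductive step I assume the theorem for $G_{n-1}$ and work through the embedding $\psi\colon G_n\hookrightarrow W(G_{n-1})$, under which $\psi(\St_{G_n}(n-1))=B(\St_{G_{n-1}}(n-2))$ by \eqref{psi stG(n)} and $G_n/\St_{G_n}(n-1)\cong G_{n-1}$. Since $G_{n-1}/G_{n-1}'$ is elementary abelian, \cref{LCS of W(G)} and \cref{LCS of W(G) detail} compute the entire lower central series of $W(G_{n-1})$ from the inductively known series of $G_{n-1}$; in particular $W(G_{n-1})$ has class $p\,c(n-1)$ and $|\gamma_{(j-1)p+k}(W(G_{n-1})):\gamma_{(j-1)p+k+1}(W(G_{n-1}))|=|\gamma_j(G_{n-1}):\gamma_{j+1}(G_{n-1})|$. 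The whole argument then consists in comparing the series of $\psi(G_n)$ with that of the overgroup $W(G_{n-1})$.

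First I treat the deep range, generalising part (iii) of \cref{LCS G3}: I claim that $\psi(\gamma_i(G_n))=\gamma_{i+1}(W(G_{n-1}))$ whenever $\gamma_i(G_n)\le\St_{G_n}(n-1)$, that is, for $i\ge c(n-1)+1$. The inclusion $\subseteq$ is automatic; for the reverse inclusion $\gamma_{i+1}(W(G_{n-1}))\le\psi(G_n)$ I would proceed exactly as in \cref{LCS G3}, using the uniseriality of the $G_n$-action on $\St_{G_n}(n-1)$ (\cref{uniserial}) to match the two descending chains level by level, starting from $\psi(\St_{G_n}(n-1))=B(\St_{G_{n-1}}(n-2))$ and using that both chains drop with the index prescribed by \cref{LCS of W(G) detail}(ii). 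Granting the correspondence, $\gamma_i(G_n)=1$ iff $\gamma_{i+1}(W(G_{n-1}))=1$ iff $i\ge p\,c(n-1)$, whence $c(n)=p\,c(n-1)-1$; this recursion telescopes from $c(3)=p^2-1$ to the stated closed form. In the same range the indices of $G_n$ are read off from \cref{LCS of W(G) detail}(ii) as the widths of $G_{n-1}$ one level up, and these are all $1$ because the deep range maps into the width-$1$ top segment $[r(n-3),c(n-1)]$ of $G_{n-1}$.

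The transition from this deep range to the small-$i$ range, where $\gamma_i(G_n)$ projects onto $\gamma_i(G_{n-1})$, is mediated by a single \emph{sandwich}: the subgroup $\St_{G_n}(n-1)$ is not a term of the series but sits, like $\St_{G_3}(2)$ inside $\gamma_p(G_3)$ in \cref{gamma_p+i}, strictly inside the top width-$2$ block $[\ell(n-2),r(n-2))$, contributing one extra factor of $p$ across that block and so raising the width there from the value $1$ it has in $G_{n-1}$ to the value $2$. Because $\St_{G_{n-1}}(n-2)$ occupies the analogous position in $G_{n-1}$, namely the block at level $\ell(n-3)$, passing to $B(\St_{G_{n-1}}(n-2))$ inside $W(G_{n-1})$ and pulling back through $\psi$ stretches that position by a factor of $p$; this ``stretch by $p$, shift by one'' is precisely what is recorded by the recursions $\ell(m)=p(\ell(m-1)-1)-1$ and $r(m)=p(r(m-1)-1)-1$. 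Combining the inherited blocks $m\le n-3$ from the quotient $G_{n-1}$ with the newly created top block $m=n-2$ from the sandwich yields exactly the width-$2$ intervals $[\ell(m),r(m))$ and the width-$1$ intervals $[r(m),\ell(m+1))$ asserted in (i) and (ii).

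Finally I would pin down the generators. The mechanism, generalising \cref{comm bap-2} and \cref{position commutators}, is that commuting $p$ consecutive times with $a^{\varepsilon}b$ reproduces downstairs a single commutator with $a^{\varepsilon}b$ together with the operator $\Delta$ upstairs---constant tuples being the image of the diagonal by \eqref{gammapW(A)}---so that, modulo the next term, $\psi(x(i))\equiv\boldsymbol{\lambda}_k(x(j))$ and $\psi(y_{\ell(m)}(i))\equiv\boldsymbol{\lambda}_k(y_{\ell(m')}(j))$ with $i+1=(j-1)p+k$; these are the distinguished generators of \cref{LCS of W(G) detail}(iii). The seed index stays fixed at $\ell(m)$ throughout the block because the new stabiliser direction enters only once, at level $\ell(m)$ via $[x(\ell(m)-1),b]$, and is afterwards carried down uniserially by further commutation with $a^{\varepsilon}b$. \textbf{The main obstacle} is this bookkeeping taken together with the reverse inclusion of the deep-range correspondence: one must verify simultaneously that $\gamma_{i+1}(W(G_{n-1}))$ already lies in $\psi(G_n)$ and that the explicit elements $x(i)$, $y_{\ell(m)}(i)$ track correctly through $\psi$, so that the block boundaries land exactly at $\ell(m)$ and $r(m)$. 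The off-by-one in the recursions---the interplay between the index shift $\gamma_i(G_n)\leftrightarrow\gamma_{i+1}(W(G_{n-1}))$ and the passage from $k=p$ back to $k=1$ between consecutive blocks of \cref{LCS of W(G) detail}---is the delicate point that makes this the crux of the proof.
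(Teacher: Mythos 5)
Your high-level architecture coincides with the paper's: induction on $n$ with base case \cref{LCS G3}, comparison of $\psi(\gamma_\bullet(G_n))$ with $\gamma_\bullet(W(G_{n-1}))$ via \cref{LCS of W(G) detail}, the recursion $c(n)=p\,c(n-1)-1$, the stabiliser acting as a sandwich that creates the new $p^2$-interval $[\ell(n-2),r(n-2))$, and the reduction of the range $i<\ell(n-2)$ to $G_{n-1}$ via \cref{same index}. However, the proposal defers exactly the steps that carry the mathematical content, and the one concrete mechanism you do propose for the deep range does not work as stated. Matching the two chains inside $\St_{G_n}(n-1)$ by uniseriality ``starting from $\psi(\St_{G_n}(n-1))=B(\St_{G_{n-1}}(n-2))$'' fails for $n\ge 4$: unlike the case $n=3$, where $\psi(\St_{G_3}(2))=B(G_2')=\gamma_{p+1}(W(G_2))$ is itself a term of the lower central series of the wreath product, for $n\ge 4$ the subgroup $\St_{G_{n-1}}(n-2)$ is not a term of $\gamma_\bullet(G_{n-1})$ (it is itself a sandwich), so $B(\St_{G_{n-1}}(n-2))$ is not a term of $\gamma_\bullet(W(G_{n-1}))$, and there is no automatic containment $\gamma_{i+1}(W(G_{n-1}))\le\psi(G_n)$ from which to read off the indices of the preimages. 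Uniseriality only places both families of normal subgroups in a single chain; to identify them you must compute the index of $\psi(G_n)\cap\gamma_{i+1}(W(G_{n-1}))$ in $\psi(\St_{G_n}(n-1))$, which is circular unless you first prove a covering statement. The paper does precisely this in \cref{gamma WG vs gamma Gn 1} and \cref{gamma WG vs gamma Gn 2}: $\psi(\gamma_{i-1}(G_n))$ covers $\gamma_i(W(G_{n-1}))$ modulo $\gamma_{i+1}(W(G_{n-1}))$ when that index is $p$, and $\psi(\gamma_{i-2}(G_n))$ covers it when the index is $p^2$; the exact identity $\psi(\gamma_{i-1}(G_n))=\gamma_i(W(G_{n-1}))$ on the final $p$-interval then follows by a \emph{downward} induction from the trivial bottom term $\gamma_{p\,c(n-1)+1}(W(G_{n-1}))=1$, not by chain matching. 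Note also that the final $p$-interval of $G_n$ begins at $r(n-2)<c(n-1)$, so your deep range $i\ge c(n-1)+1$ (the only range where $\gamma_i(G_n)\le\St_{G_n}(n-1)$) does not even cover it; the stretch $r(n-2)\le i\le c(n-1)$ is handled by neither your deep-range nor your projection argument.

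The second gap is in the sandwich block. The assertion that the width is raised from $1$ to $2$ exactly on $[\ell(n-2),r(n-2))$, with the generator $y_{\ell(n-2)}$ ``entering once and carried down uniserially,'' is the statement to be proved, not an explanation. It is here that the hypothesis $\delta\ne 0$ enters the induction step (and not only the base case $n=3$): in the proof of \cref{gamma WG vs gamma Gn 2} one needs $[x(\ell-1),a^{-\delta}]$ to generate $\langle y_\ell(\ell)\rangle$ modulo $\langle x(\ell)\rangle\gamma_{\ell+1}(G_{n-1})$, which requires $\delta\ne 0$. Your proposal never locates where FG-type is used beyond the base case, and you explicitly flag the covering and bookkeeping as ``the main obstacle'' without resolving it. As it stands the proposal is a correct strategic outline aligned with the paper, but not a proof.
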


\begin{remark}
\label{intervals}
According to \cref{main for quotients}, for a given a positive integer $i$, if $i$ lies in one of the
intervals $[\ell(m),r(m)-1]$ with $0\le m\le n-2$, then we have $|\gamma_i(G_n):\gamma_{i+1}(G_n)|=p^2$,
while for $i$ in the intervals $[r(m),\ell(m+1)-1]$ with $0\le m\le n-3$ or in the final interval $[r(n-2),c(n)]$,
we have $|\gamma_i(G_n):\gamma_{i+1}(G_n)|=p$.
We refer to the former intervals as \emph{$p^2$-intervals} of $G_n$, and to the latter as
\emph{$p$-intervals} of $G_n$.
Note that the number of consecutive indices $i$ for which we have $|\gamma_i(G_n):\gamma_{i+1}(G_n)|=p^2$ in the
$p^2$-interval $[\ell(m),r(m)-1]$ is
\[
r(m) - \ell(m) = p^{m-1}.
\]
On the other hand, if we compare the intervals of $G_n$ with those of $G_{n-1}$, we can see that $G_n$ has one
more interval of each type.
The extra $p^2$-interval for $G_n$ is $[\ell(n-2),r(n-2)-1]$, providing $p^{n-3}$ new sections with index $p^2$
with respect to $G_{n-1}$.
Since $r(n-2)\le c(n-1)$, these sections of index $p^2$ come all from the final $p$-interval of $G_{n-1}$:
passing from $G_{n-1}$ to $G_n$ has the effect of increasing the index of these sections from $p$ to $p^2$.
At the same time, all the quotients of the lower central series that were trivial in $G_{n-1}$, i.e.\ corresponding
to indices $i\ge c(n-1)+1$, integrate into the final $p$-interval of $G_n$.
This does not come as a surprise; we have $\gamma_i(G_n)\le \St_{G_n}(n-1)$ for those indices, and as we
know from \cref{uniserial}, the action of $G_n$ on $\St_{G_n}(n-1)$ is uniserial, since $G$ is non-periodic.
\end{remark}

\begin{remark}
Since every element of the form $x(i)$ with $i>1$ or of the form $y_j(i)$ with $i>j$ is an $i$-fold commutator
whose last entry is $a^{\varepsilon}b$, it follows from \cref{main for quotients} that
\begin{equation}
\label{almost all gens with aepsilonb}
\gamma_i(G_n) \equiv [\gamma_{i-1}(G_n),a^{\varepsilon}b] \mod \gamma_{i+1}(G_n)
\end{equation}
whenever $i$ is not of the form $\ell(m)$ for $0\le m\le n-2$ or, using the terminology of \cref{intervals},
whenever $i$ is not the starting index of a $p^2$-interval for $G_n$.
\end{remark}

The proof of \cref{main for quotients} proceeds by induction on $n$, the base case $n=3$ being provided by
\cref{LCS G3}.
For this reason, most of the results in this section are stated assuming that
\cref{main for quotients} holds true for $G_{n-1}$.
Under this assumption, and as a consequence of \cref{LCS of W(G) detail}, if $i=(j-1)p+k$ with
$1\le j\le c(n-1)$ and $1\le k\le p$, we obtain that
\begin{equation}
\label{index gammai WG = index gammaj G}
|\gamma_i(W(G_{n-1})):\gamma_{i+1}(W(G_{n-1}))|
=
|\gamma_j(G_{n-1}):\gamma_{j+1}(G_{n-1})|
\end{equation}
is equal to $p^2$ if $\ell(m)\le j < r(m)$ for some $0\le m\le n-3$, and equal to $p$
otherwise.

\vspace{8pt}

We start with a couple of results regarding the effect of taking commutators with $\psi(G_n)$ in some
special cases.
In the following proposition, we consider the commutator $[\mathbf{g},\psi(G_n)]$ with a single tuple
$\mathbf{g}\in\gamma_i(W(G_{n-1}))$, and then we apply this to determine commutators of the form
$[\gamma_i(W(G_{n-1})),\psi(G_n)]$.

\begin{proposition}
\label{comm g with psi gn}
Let $G$ be a non-periodic GGS-group and let $n\ge 3$.
Then for a given tuple $\mathbf{g}\in\gamma_i(W(G_{n-1}))$ we have
\begin{equation}
\label{comm psi gn = comm psi aeb}
[\mathbf{g},\psi(G_n)]
\equiv
\langle [\mathbf{g},\psi(a^{\varepsilon}b)] \rangle
\mod \gamma_{i+2}(W(G_{n-1})).
\end{equation}
Furthermore, if we write $i=(j-1)p+k$ with $j\ge 1$ and $1\le k\le p$,
then the following hold:
\begin{enumerate}
\item
If $1\le k\le p-1$ then
\[
[\mathbf{g},\psi(G_n)] \equiv \langle \Delta(\mathbf{g}) \rangle \mod \gamma_{i+2}(W(G_{n-1})).
\]
\item
If $k=p$ and we write $\mathbf{g}\equiv (g,\overset{p}{\ldots},g) \mod \gamma_{i+1}(W(G_{n-1}))$ with
$g\in\gamma_j(G_{n-1})$,
then
\[
[\mathbf{g},\psi(G_n)] \equiv \langle ([g,a^{\varepsilon}b],1,\overset{p-1}{\ldots},1) \rangle \mod \gamma_{i+2}(W(G_{n-1})).
\]
\end{enumerate}
\end{proposition}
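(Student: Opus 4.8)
The plan is to reduce the computation of $[\mathbf{g},\psi(G_n)]$ to commutators with the two generators $\psi(a)$ and $\psi(b)$ of $\psi(G_n)$, and then to show that in each of the two ranges of $k$ exactly one of these survives modulo $\gamma_{i+2}(W(G_{n-1}))$. The starting observation is that, for our fixed $\mathbf{g}\in\gamma_i(W(G_{n-1}))$, the map $w\mapsto [\mathbf{g},w]\,\gamma_{i+2}(W(G_{n-1}))$ is a homomorphism from $W(G_{n-1})$ to the abelian group $\gamma_{i+1}(W(G_{n-1}))/\gamma_{i+2}(W(G_{n-1}))$: this follows from $[\mathbf{g},w_1w_2]=[\mathbf{g},w_2]\,[\mathbf{g},w_1]^{w_2}$ together with the fact that $[\mathbf{g},w_1]\in\gamma_{i+1}$ is centralised modulo $\gamma_{i+2}$. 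Since $\psi(G_n)=\langle\psi(a),\psi(b)\rangle$ and $\psi(a^{\varepsilon}b)=\psi(a)^{\varepsilon}\psi(b)$, applying this homomorphism yields, modulo $\gamma_{i+2}(W(G_{n-1}))$,
\[
[\mathbf{g},\psi(G_n)]\equiv\langle\,[\mathbf{g},\psi(a)],\ [\mathbf{g},\psi(b)]\,\rangle
\qquad\text{and}\qquad
[\mathbf{g},\psi(a^{\varepsilon}b)]\equiv[\mathbf{g},\psi(a)]^{\varepsilon}\,[\mathbf{g},\psi(b)].
\]
Moreover $[\mathbf{g},\psi(a)]=[\mathbf{g},\sigma]=\Delta(\mathbf{g})$, because $\psi(a)=(1,\overset{p}{\ldots},1)\sigma$ acts on the base group exactly as $\sigma$. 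Thus the whole statement reduces to evaluating $\Delta(\mathbf{g})$ and $[\mathbf{g},\psi(b)]$ modulo $\gamma_{i+2}(W(G_{n-1}))$.

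For (i), i.e.\ $1\le k\le p-1$, I would argue that $[\mathbf{g},\psi(b)]$ dies. As $\psi(b)=(a^{e_1},\dots,a^{e_{p-1}},b)$ lies in the base group and $\mathbf{g}\in\gamma_i(W(G_{n-1}))\subseteq B(\gamma_j(G_{n-1}))$ by \eqref{gamma_i W(G)}, the commutator $[\mathbf{g},\psi(b)]$ is taken componentwise and its entries lie in $[\gamma_j(G_{n-1}),G_{n-1}]=\gamma_{j+1}(G_{n-1})$; hence $[\mathbf{g},\psi(b)]\in B(\gamma_{j+1}(G_{n-1}))$. For $1\le k\le p-1$ one checks $B(\gamma_{j+1}(G_{n-1}))\subseteq\gamma_{i+2}(W(G_{n-1}))$ directly from \eqref{gamma_i W(G)} when $k\le p-2$ and from \eqref{gamma_jp+1 W(G)} when $k=p-1$, so $[\mathbf{g},\psi(b)]\equiv 1$. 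Then $[\mathbf{g},\psi(G_n)]\equiv\langle\Delta(\mathbf{g})\rangle$ and $[\mathbf{g},\psi(a^{\varepsilon}b)]\equiv\Delta(\mathbf{g})^{\varepsilon}$; since $\varepsilon$ is coprime to $p$ and $W(G_{n-1})$ is a finite $p$-group, these generate the same cyclic group, which gives both the main congruence and (i).

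For (ii), i.e.\ $k=p$, the roles reverse and it is $\Delta(\mathbf{g})$ that dies. Writing $\mathbf{g}=(g,\overset{p}{\ldots},g)\,\mathbf{h}$ with $\mathbf{h}\in\gamma_{i+1}(W(G_{n-1}))=B(\gamma_{j+1}(G_{n-1}))$ by \eqref{gamma_jp+1 W(G)}, the constant tuple commutes with $\sigma$, so $\Delta(\mathbf{g})=\Delta(\mathbf{h})\in\Delta(B(\gamma_{j+1}(G_{n-1})))\subseteq\gamma_{i+2}(W(G_{n-1}))$, and hence $[\mathbf{g},\psi(a)]\equiv 1$. It remains to evaluate $[\mathbf{g},\psi(b)]$. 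Its entries are $[g_m,b_m]$ with $b_m\in\{a^{e_1},\dots,a^{e_{p-1}},b\}$ and $g_m\equiv g$ modulo $\gamma_{j+1}(G_{n-1})$, so $[\mathbf{g},\psi(b)]\in B(\gamma_{j+1}(G_{n-1}))$; I would collapse this tuple to its first component modulo $\gamma_{i+2}(W(G_{n-1}))$ by \cref{reduction to first component}, reducing the problem to computing the product of the entries in $\gamma_{j+1}(G_{n-1})/\gamma_{j+2}(G_{n-1})$. In that central abelian factor, bilinearity of the commutator and $g_m\equiv g$ let the product telescope to $[g,a^{e_1}\cdots a^{e_{p-1}}b]=[g,a^{\varepsilon}b]$, giving $[\mathbf{g},\psi(b)]\equiv([g,a^{\varepsilon}b],1,\overset{p-1}{\ldots},1)$; this is exactly (ii), and since $[\mathbf{g},\psi(a^{\varepsilon}b)]\equiv[\mathbf{g},\psi(b)]$ here it also yields the main congruence.

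The main obstacle is precisely this last computation in (ii): seeing that the product of the $p$ componentwise commutators $[g_m,b_m]$ telescopes to a single commutator with the element $a^{\varepsilon}b$ rather than with $b$ alone. This is where the quantity $\varepsilon=\sum_i e_i$ enters, and it explains why $a^{\varepsilon}b$ governs the lower central series. The care required is in justifying the passage from $\prod_m[g_m,b_m]$ to $[g,\prod_m b_m]$ via bilinearity of commutators modulo $\gamma_{j+2}(G_{n-1})$ together with the congruences $g_m\equiv g$ modulo $\gamma_{j+1}(G_{n-1})$, and in invoking \cref{reduction to first component} correctly to land the result in the first component.
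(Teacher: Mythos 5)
Your proof is correct and follows essentially the same route as the paper's: reduce $[\mathbf{g},\psi(G_n)]$ modulo $\gamma_{i+2}(W(G_{n-1}))$ to the commutators with $\sigma=\psi(a)$ and $\psi(b)$ (you do this via the homomorphism $w\mapsto[\mathbf{g},w]\gamma_{i+2}$, the paper via the decomposition $\psi(G_n)=\psi(G_n)'\langle\psi(b)\rangle\langle\sigma\rangle$ and the Three Subgroup Lemma --- an inessential difference), then show that $[\mathbf{g},\psi(b)]$ dies for $1\le k\le p-1$ and $\Delta(\mathbf{g})$ dies for $k=p$, with the same telescoping to $[g,a^{\varepsilon}b]$ via \cref{reduction to first component} in the latter case. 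No gaps.
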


\begin{proof}
To begin with, note that
\[
\psi(G_n) = \psi(G_n'\langle b \rangle \langle a \rangle) = \psi(G_n)' \langle \psi(b) \rangle \langle \sigma \rangle,
\]
and that
\[
[\mathbf{g},\psi(G_n)']
\le
[\gamma_i(W(G_{n-1})),\psi(G_n)']
\le
[\gamma_i(W(G_{n-1})),\psi(G_n),\psi(G_n)]
\le
\gamma_{i+2}(W(G_{n-1})),
\]
where the second inclusion follows from Hall's Three Subgroup Lemma.
Hence
\begin{equation}
\label{comm g psi gn as product}
[\mathbf{g},\psi(G_n)] \equiv \langle [\mathbf{g},\psi(b)] , [\mathbf{g},\sigma] \rangle
\mod \gamma_{i+2}(W(G_{n-1})),
\end{equation}
taking into account that commutators of elements of $\gamma_i(W(G_{n-1}))$ and elements of
$W(G_{n-1})$ behave bilinearly modulo $\gamma_{i+2}(W(G_{n-1}))$.

Suppose first that $1\le k\le p-1$.
Then
\begin{align*}
[\mathbf{g},\psi(b)]
\in
[B(\gamma_j(G_{n-1})),B(G_{n-1})]
&=
B(\gamma_{j+1}(G_{n-1}))
\\
&=
\gamma_{jp+1}(W(G_{n-1}))
\le
\gamma_{i+2}(W(G_{n-1})).
\end{align*}
Thus from \eqref{comm g psi gn as product} we have
\[
[\mathbf{g},\psi(G_n)]
\equiv
\langle [\mathbf{g},\sigma] \rangle
=
\langle \Delta(\mathbf{g}) \rangle
\mod \gamma_{i+2}(W(G_{n-1})).
\]
On the other hand,
\[
[\mathbf{g},\psi(a^{\varepsilon}b)]
=
[\mathbf{g},\sigma^{\varepsilon}\psi(b)]
\equiv
[\mathbf{g},\sigma]^{\varepsilon}
\mod
\gamma_{i+2}(W(G_{n-1})).
\]
This completes the proof of the lemma in the case $1\le k\le p-1$, since $\varepsilon\ne 0$ under the
standing assumption that $G$ is non-periodic.

Assume now that $k=p$.
Since $\sigma$ commutes with constant tuples, we have $[\mathbf{g},\sigma]\in \gamma_{i+2}(W(G_{n-1}))$.
Thus, by \eqref{comm g psi gn as product} we have
\[
[\mathbf{g},\psi(G_n)]\equiv \langle [\mathbf{g},\psi(b)] \rangle \mod \gamma_{i+2}(W(G_{n-1})).
\]
Now
\begin{align*}
[\mathbf{g},\psi(b)]
&\equiv
[(g,\overset{p}{\ldots},g),(a^{e_1},\ldots,a^{e_{p-1}},b)]
\\
&=
([g,a^{e_1}],\ldots,[g,a^{e_{p-1}}],[g,b])
\\
&\equiv
([g,a^{\varepsilon}b],1,\overset{p-1}{\ldots},1)
\mod \gamma_{i+2}(W(G_{n-1})),
\end{align*}
where the last congruence follows from \cref{reduction to first component}.
Since
\[
[\mathbf{g},\psi(a^{\epsilon}b)]
=
[\mathbf{g},\sigma^{\epsilon}\psi(b)]
\equiv
[\mathbf{g},\psi(b)]
\mod \gamma_{i+2}(W(G_{n-1})),
\]
this concludes the proof.
\end{proof}

\begin{corollary}
Let $G$ be a non-periodic GGS-group and let $n\ge 3$.
Then for a given $i\ge 1$, we have
\begin{equation}
\label{comm gammai psi gn}
[\gamma_i(W(G_{n-1})),\psi(G_n)]
\equiv
[\gamma_i(W(G_{n-1})),\psi(a^{\varepsilon}b)]
\mod
\gamma_{i+2}(W(G_{n-1})).
\end{equation}
Furthermore, if we write $i=(j-1)p+k$ with $j\ge 1$ and $1\le k\le p$, then the following hold:
\begin{enumerate}
\item
If $1\le k\le p-1$ then
\begin{equation}
\label{comm gammai psi gn 1=k=p-1}
[\gamma_i(W(G_{n-1})),\psi(G_n)]
\equiv
\gamma_{i+1}(W(G_{n-1}))
\mod
\gamma_{i+2}(W(G_{n-1})).
\end{equation}
\item
If $k=p$ then
\begin{equation}
\label{comm gammai psi gn k=p}
[\gamma_i(W(G_{n-1})),\psi(G_n)]
\equiv
B([\gamma_j(G_{n-1}),a^{\varepsilon}b])
\mod
\gamma_{i+2}(W(G_{n-1})).
\end{equation}
\end{enumerate}
\end{corollary}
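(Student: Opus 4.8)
The plan is to obtain this corollary as the subgroup-level integration of the single-tuple statement in \cref{comm g with psi gn}. Write $N=\gamma_{i+2}(W(G_{n-1}))$. The subgroup $[\gamma_i(W(G_{n-1})),\psi(G_n)]$ is generated by the commutators $[\mathbf{g},\psi(x)]$ with $\mathbf{g}\in\gamma_i(W(G_{n-1}))$ and $x\in G_n$, and each of these lies in $[\mathbf{g},\psi(G_n)]$. By \eqref{comm psi gn = comm psi aeb} of \cref{comm g with psi gn}, every such generator therefore lies in $[\gamma_i(W(G_{n-1})),\psi(a^{\varepsilon}b)]\,N$. First I would note that, since $\gamma_i(W(G_{n-1}))$ is normal and hence $\psi(a^{\varepsilon}b)$-invariant, the set $[\gamma_i(W(G_{n-1})),\psi(a^{\varepsilon}b)]$ is already a subgroup by \eqref{comm H with <g>}; consequently $[\gamma_i(W(G_{n-1})),\psi(a^{\varepsilon}b)]\,N$ is a subgroup containing all generators, which yields the inclusion $[\gamma_i(W(G_{n-1})),\psi(G_n)]\subseteq[\gamma_i(W(G_{n-1})),\psi(a^{\varepsilon}b)]\,N$. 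The reverse inclusion is immediate from $a^{\varepsilon}b\in G_n$, and together they give \eqref{comm gammai psi gn}.

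For the refined statements I would integrate the two parts of \cref{comm g with psi gn} in exactly the same manner. When $1\le k\le p-1$, part (i) gives $[\mathbf{g},\psi(G_n)]\equiv\langle\Delta(\mathbf{g})\rangle \mod N$ for each $\mathbf{g}$, so the integration identifies $[\gamma_i(W(G_{n-1})),\psi(G_n)]$ modulo $N$ with the subgroup generated by all $\Delta(\mathbf{g})=[\mathbf{g},\sigma]$. By \eqref{comm H with <g>} this subgroup is precisely $[\gamma_i(W(G_{n-1})),\langle\sigma\rangle]=\Delta(\gamma_i(W(G_{n-1})))$, and it remains only to recognise it as $\gamma_{i+1}(W(G_{n-1}))$ modulo $N$. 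This is exactly the surjectivity of the isomorphism $\mathbf{g}\,\gamma_{i+1}^*\mapsto\Delta(\mathbf{g})\,\gamma_{i+2}^*$ provided by (iv) of \cref{LCS of W(G) detail} in the range $1\le k\le p-1$, and it establishes \eqref{comm gammai psi gn 1=k=p-1}.

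The case $k=p$, where $i=jp$, is the delicate one. By \cref{LCS of W(G)} together with \eqref{gammapW(A)}, every $\mathbf{g}\in\gamma_{jp}(W(G_{n-1}))$ is congruent modulo $\gamma_{jp+1}(W(G_{n-1}))=\gamma_{i+1}(W(G_{n-1}))$ to a constant tuple $(g,\overset{p}{\ldots},g)$ with $g\in\gamma_j(G_{n-1})$. Feeding this into part (ii) of \cref{comm g with psi gn} and integrating as before identifies $[\gamma_i(W(G_{n-1})),\psi(G_n)]$ modulo $N$ with the subgroup $([\gamma_j(G_{n-1}),a^{\varepsilon}b],1,\overset{p-1}{\ldots},1)$, where $[\gamma_j(G_{n-1}),a^{\varepsilon}b]=[\gamma_j(G_{n-1}),\langle a^{\varepsilon}b\rangle]$ is a subgroup of $\gamma_{j+1}(G_{n-1})$ by \eqref{comm H with <g>}. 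The final step, which I expect to be the main obstacle, is to upgrade this ``first component only'' subgroup to the full base group $B([\gamma_j(G_{n-1}),a^{\varepsilon}b])$ of \eqref{comm gammai psi gn k=p}. For this I would apply \cref{reduction to first component} with $j+1$ in place of $j$: since $[\gamma_j(G_{n-1}),a^{\varepsilon}b]\le\gamma_{j+1}(G_{n-1})$, every tuple of $B([\gamma_j(G_{n-1}),a^{\varepsilon}b])$ is congruent modulo $\gamma_{jp+2}(W(G_{n-1}))=N$ to the tuple whose only nontrivial entry is the product of its components, and that product again lies in the subgroup $[\gamma_j(G_{n-1}),a^{\varepsilon}b]$. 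Hence $([\gamma_j(G_{n-1}),a^{\varepsilon}b],1,\overset{p-1}{\ldots},1)$ and $B([\gamma_j(G_{n-1}),a^{\varepsilon}b])$ agree modulo $N$, completing the proof. The only points demanding care throughout are the bookkeeping of which lower central term of $W(G_{n-1})$ plays the role of the modulus $N$ in each case, and verifying that \cref{reduction to first component} is being applied at the correct level $j+1$.
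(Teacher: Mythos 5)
Your proposal is correct and follows essentially the same route as the paper: deduce \eqref{comm gammai psi gn} by integrating \eqref{comm psi gn = comm psi aeb} over generators, obtain (i) from part (i) of \cref{comm g with psi gn} together with (iv) of \cref{LCS of W(G) detail}, and obtain (ii) from part (ii) of \cref{comm g with psi gn} followed by \cref{reduction to first component} (applied at level $j+1$, with modulus $\gamma_{jp+2}(W(G_{n-1}))=\gamma_{i+2}(W(G_{n-1}))$) to pass from the first-component subgroup to $B([\gamma_j(G_{n-1}),a^{\varepsilon}b])$. The only difference is that you spell out the ``integration'' and the identification of constant tuples in $\gamma_{jp}(W(G_{n-1}))$ in more detail than the paper does; the substance is identical.
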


\begin{proof}
First of all, note that \eqref{comm gammai psi gn} follows immediately from \eqref{comm psi gn = comm psi aeb}.

(i)
If $1\le k\le p-1$ then
\begin{align*}
[\gamma_i(W(G_{n-1})),\psi(G_n)]
&\equiv
\langle \Delta(\mathbf{g}) \mid \mathbf{g}\in\gamma_i(W(G_{n-1})) \rangle
\\
&\equiv
\gamma_{i+1}(W(G_{n-1}))
\mod
\gamma_{i+2}(W(G_{n-1})),
\end{align*}
where the first congruence follows from (i) of \cref{comm g with psi gn}, and the second one from (iv)
of \cref{LCS of W(G) detail}.
This proves the result.

(ii)
If $k=p$ then by (iii) of \cref{LCS of W(G) detail} and (ii) of \cref{comm g with psi gn}, we have
\[
[\gamma_i(W(G_{n-1})),\psi(G_n)]
\equiv
\langle ([g,a^{\varepsilon}b],1,\overset{p-1}{\ldots},1) \mid g\in\gamma_j(G_{n-1}) \rangle
\mod
\gamma_{i+2}(W(G_{n-1})).
\]
By \cref{reduction to first component}, this means that
\[
[\gamma_i(W(G_{n-1})),\psi(G_n)]
\equiv
B([\gamma_j(G),a^{\varepsilon}b])
\mod
\gamma_{i+2}(W(G_{n-1})),
\]
as desired.
\end{proof}

Next we study the relation between the image of the lower central series of $G_n$ under $\psi$
and the lower central series of $W(G_{n-1})$.
We will need the following straightforward lemma.

\begin{lemma}
\label{same index}
Let $G$ be a group and let $N$ be a normal subgroup of $G$.
If the subgroup $\gamma_i(G)$ has finite index in $G$ for some $i\ge 1$ then
we have $|G:\gamma_i(G)|=|G/N:\gamma_i(G/N)|$ if and only if $N\le \gamma_i(G)$.
\end{lemma}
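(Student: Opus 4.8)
The plan is to reduce the whole statement to the standard identity $\gamma_i(G/N)=\gamma_i(G)N/N$, valid for the lower central series of any quotient. Granting this, the first step is to rewrite the right-hand index using the third isomorphism theorem:
\[
|G/N:\gamma_i(G/N)| = |G/N:\gamma_i(G)N/N| = |G:\gamma_i(G)N|.
\]
Thus the equality to be characterised becomes $|G:\gamma_i(G)|=|G:\gamma_i(G)N|$, and the problem is now purely one of comparing the indices of the two subgroups $\gamma_i(G)\le \gamma_i(G)N$ in $G$.

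Next I would exploit the chain $\gamma_i(G)\le \gamma_i(G)N\le G$. Since $\gamma_i(G)$ has finite index in $G$ by hypothesis, so does the intermediate subgroup $\gamma_i(G)N$, and the index is multiplicative along the tower:
\[
|G:\gamma_i(G)| = |G:\gamma_i(G)N|\cdot |\gamma_i(G)N:\gamma_i(G)|.
\]
Hence the two indices of the statement coincide precisely when $|\gamma_i(G)N:\gamma_i(G)|=1$, that is, when $\gamma_i(G)N=\gamma_i(G)$, which is in turn equivalent to $N\le \gamma_i(G)$. This disposes of both implications simultaneously, with no separate ``if'' and ``only if'' arguments required.

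For completeness I would include a one-line justification of the identity $\gamma_i(G/N)=\gamma_i(G)N/N$: by induction on $i$, the canonical projection $G\to G/N$ is surjective and commutes with the formation of commutators, so it carries $\gamma_i(G)$ onto $\gamma_i(G/N)$, and the image of any subgroup $H$ under this projection is $HN/N$. I do not anticipate any genuine obstacle here; the only point needing a little care is the role of the finiteness hypothesis, which ensures that every index in sight is finite, so that the multiplicativity relation above makes sense and, in the forward direction, equality of the (finite) cardinals legitimately forces $|\gamma_i(G)N:\gamma_i(G)|=1$.
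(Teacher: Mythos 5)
Your proof is correct; the paper states this lemma without proof (calling it ``straightforward''), and your argument via $\gamma_i(G/N)=\gamma_i(G)N/N$, the third isomorphism theorem, and multiplicativity of the index along $\gamma_i(G)\le\gamma_i(G)N\le G$ is exactly the standard reasoning the authors intend. You also correctly flag the one point of substance, namely that finiteness of $|G:\gamma_i(G)|$ is what lets the equality of indices force $|\gamma_i(G)N:\gamma_i(G)|=1$.
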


\begin{proposition}
\label{gamma WG vs gamma Gn 1}
Let $G$ be a GGS-group of FG-type, and let $n\ge 4$.
If \cref{main for quotients} is true for $G_{n-1}$ then the following hold for $p+2\le i\le p\cdot c(n-1)$:
\begin{enumerate}
\item
If we write $i=(j-1)p+k$ with $1\le k\le p$ then
\begin{equation}
\label{psii-1 modulo}
\psi(\gamma_{i-1}(G_n))
\equiv
\langle \psi(x(i-1)) \rangle
\equiv
\langle \boldsymbol{\lambda}_k(x(j)) \rangle
\mod
\gamma_{i+1}(W(G_{n-1})).
\end{equation}
In particular,
\begin{equation}
\label{psigammai-1Gn in gammaiWGn-1}
\psi(\gamma_{i-1}(G_n))\le \gamma_i(W(G_{n-1})).
\end{equation}
\item
We have
\begin{equation}
\label{index in psii-1}
|\psi(\gamma_{i-1}(G_n)) \, \gamma_{i+1}(W(G_{n-1})):\gamma_{i+1}(W(G_{n-1}))|=p.
\end{equation}
Thus if $|\gamma_i(W(G_{n-1})):\gamma_{i+1}(W(G_{n-1}))|=p$ then
\begin{equation}
\label{psii-1 if index p}
\gamma_i(W(G_{n-1})) = \psi(\gamma_{i-1}(G_n)) \, \gamma_{i+1}(W(G_{n-1})).
\end{equation}
\end{enumerate}
\end{proposition}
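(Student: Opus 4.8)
The plan is to prove (i) by induction on $i$ and to read off (ii) at the end; throughout write $W=W(G_{n-1})$. One half of (i) is automatic: since $x(i-1)\in\gamma_{i-1}(G_n)$ we have $\langle\psi(x(i-1))\rangle\le\psi(\gamma_{i-1}(G_n))$, and since $\psi(G_n)\le W$ we have $\psi(\gamma_{i-1}(G_n))=\gamma_{i-1}(\psi(G_n))\le\gamma_{i-1}(W)$. All the relations in \eqref{psii-1 modulo} are equalities of \emph{subgroups} modulo $\gamma_{i+1}(W)$, so the factors of $\varepsilon$ (a unit of $\F_p$) that \cref{comm g with psi gn} produces are harmless, being absorbed by the angle brackets. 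The real engine of the induction is the level shift recorded in \eqref{psigammai-1Gn in gammaiWGn-1}: once we know $\psi(\gamma_{i-2}(G_n))\le\gamma_{i-1}(W)$, the element $\psi(x(i-2))$ lies in $\gamma_{i-1}(W)$ rather than merely in $\gamma_{i-2}(W)$, and this extra level is exactly what makes \cref{comm g with psi gn} sharp enough.

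For the inductive step, taken with $i\ge p+3$ so that the hypothesis applies to $i-1\ge p+2$, I would write $i-1=(j'-1)p+k'$ and invoke statement (i) at $i-1$ in three forms: $\psi(\gamma_{i-2}(G_n))\le\gamma_{i-1}(W)$, then $\psi(\gamma_{i-2}(G_n))\equiv\langle\psi(x(i-2))\rangle\bmod\gamma_i(W)$, and finally $\psi(x(i-2))\equiv\boldsymbol{\lambda}_{k'}(x(j'))\bmod\gamma_i(W)$ up to a unit. Because $\psi(x(i-2))\in\gamma_{i-1}(W)$, \cref{comm g with psi gn} applies to it with modulus $\gamma_{i+1}(W)$, and this is the crux of the step. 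The generation statement comes out first: using $\gamma_{i-1}(G_n)=[\gamma_{i-2}(G_n),G_n]$, Dedekind's modular law to write $\psi(\gamma_{i-2}(G_n))=\langle\psi(x(i-2))\rangle\,(\psi(\gamma_{i-2}(G_n))\cap\gamma_i(W))$, and the fact that $[\gamma_i(W),\psi(G_n)]\le\gamma_{i+1}(W)$ (while the conjugation corrections that appear lie in $[\gamma_{i-1}(W),\gamma_i(W)]\le\gamma_{2i-1}(W)\subseteq\gamma_{i+1}(W)$), I would reduce $\psi(\gamma_{i-1}(G_n))$ modulo $\gamma_{i+1}(W)$ to the single commutator $[\psi(x(i-2)),\psi(G_n)]$, which by \eqref{comm psi gn = comm psi aeb} equals $\langle[\psi(x(i-2)),\psi(a^{\varepsilon}b)]\rangle=\langle\psi(x(i-1))\rangle$.

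It then remains to identify $\psi(x(i-1))=[\psi(x(i-2)),\psi(a^{\varepsilon}b)]$ with $\boldsymbol{\lambda}_k(x(j))$ up to a unit, where the two parts of \cref{comm g with psi gn} split the argument. If $k'\le p-1$, part (i) gives $\psi(x(i-1))\equiv\Delta(\psi(x(i-2)))^{\varepsilon}$, and the inductive form of $\psi(x(i-2))$ turns this into $\boldsymbol{\lambda}_{k'+1}(x(j'))^{\varepsilon}$; if $k'=p$, then $\psi(x(i-2))\equiv(x(j'),\overset{p}{\ldots},x(j'))$, so part (ii) together with \cref{reduction to first component} gives $\psi(x(i-1))\equiv([x(j'),a^{\varepsilon}b],1,\overset{p-1}{\ldots},1)=\boldsymbol{\lambda}_1(x(j'+1))$, using $x(j'+1)=[x(j'),a^{\varepsilon}b]$. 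A routine check of the indices shows that $\boldsymbol{\lambda}_{k'+1}(x(j'))=\boldsymbol{\lambda}_k(x(j))$ in the first case and $\boldsymbol{\lambda}_1(x(j'+1))=\boldsymbol{\lambda}_k(x(j))$ in the second, so $\langle\psi(x(i-1))\rangle\equiv\langle\boldsymbol{\lambda}_k(x(j))\rangle$ and (i) holds at $i$; \eqref{psigammai-1Gn in gammaiWGn-1} is immediate since $\boldsymbol{\lambda}_k(x(j))\in\gamma_i(W)$. Finally (ii) drops out: by \cref{LCS of W(G) detail}(iii) the assignment $g\mapsto\boldsymbol{\lambda}_k(g)$ identifies $\gamma_i(W)/\gamma_{i+1}(W)$ with $\gamma_j(G_{n-1})/\gamma_{j+1}(G_{n-1})$, and since \cref{main for quotients} for $G_{n-1}$ makes $x(j)$ a generator of this nontrivial factor, which has exponent $p$, the element $\boldsymbol{\lambda}_k(x(j))$ has order exactly $p$ modulo $\gamma_{i+1}(W)$; this yields \eqref{index in psii-1}, and \eqref{psii-1 if index p} follows at once.

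The step I expect to be the genuine obstacle is the base case $i=p+2$, which cannot be primed from $i=p+1$: the level shift $\psi(\gamma_{p+1}(G_n))\le\gamma_{p+2}(W)$ has no analogue one step earlier, since $\gamma_p(G_n)$ is a $p^2$-level carrying the extra generator $y_p(p)$ and does \emph{not} satisfy $\psi(\gamma_p(G_n))\le\gamma_{p+1}(W)$. Hence the base case must be treated by hand, and without appealing to \cref{main for quotients} for $G_n$, which is the very statement under construction. I would argue it in the spirit of \cref{sec:n at most 3}: using $\psi(\St_{G_n}(2))=\gamma_{p+1}(W)$ (so that already $\psi(\gamma_{p+1}(G_n))\le\gamma_{p+1}(W)$), and then analysing the commutators $[\psi(x(p)),\psi(G_n)]$ and $[\psi(y_p(p)),\psi(G_n)]$ modulo $\gamma_{p+3}(W)$ with the explicit tuples supplied by \cref{position commutators} and \cref{gamma_p+i}, so as to verify both that the contribution of $y_p(p)$ collapses into $\langle\psi(x(p+1))\rangle$ and that the level shift $\psi(\gamma_{p+1}(G_n))\le\gamma_{p+2}(W)$ begins here, thereby furnishing the input that drives the induction above.
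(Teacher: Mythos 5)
Your induction step and your derivation of part (ii) follow the paper's own argument essentially verbatim: the same reduction of $[\psi(\gamma_{i-2}(G_n)),\psi(G_n)]$ to the single commutator $[\psi(x(i-2)),\psi(G_n)]$ modulo $\gamma_{i+1}(W(G_{n-1}))$, the same case split $k'\le p-1$ versus $k'=p$ via the two parts of \cref{comm g with psi gn}, and the same appeal to \cref{LCS of W(G) detail}(iii) together with \cref{main for quotients} for $G_{n-1}$ to see that $\boldsymbol{\lambda}_k(x(j))$ has order exactly $p$ modulo $\gamma_{i+1}(W(G_{n-1}))$. You also correctly isolate the base case $i=p+2$ as the real obstacle and correctly observe that $\psi(\gamma_p(G_n))\not\le\gamma_{p+1}(W(G_{n-1}))$, so the level shift must be created there from scratch.

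The gap is in your sketch of that base case. Whether you recompute $[\psi(x(p)),\psi(G_n)]$ and $[\psi(y_p(p)),\psi(G_n)]$ directly from the tuples of \cref{position commutators} and \cref{gamma_p+i}, or (as the paper does) simply quote \cref{LCS G3}(iii) in the form $\gamma_{p+2}(W(G_2))=\psi(\gamma_{p+1}(G_3))=\langle\psi(x(p+1))\rangle\,\gamma_{p+3}(W(G_2))$, everything you know is a statement about $G_3=G_n/\St_{G_n}(3)$: the elements $x(p),y_p(p)$ generate $\gamma_p(G_n)$ over $\gamma_{p+1}(G_n)$ only modulo $\St_{G_n}(3)$, and all the explicit computations of \cref{sec:n at most 3} are carried out there. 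After applying $\psi$, the resulting identity for $G_n$ therefore holds only modulo $\psi(\St_{G_n}(3))=B(\St_{G_{n-1}}(2))$ (by \eqref{psi stG(n)}), and a priori this error term is only known to lie in $B(G_{n-1}')=\gamma_{p+1}(W(G_{n-1}))$ --- which is useless when you are working modulo $\gamma_{p+3}(W(G_{n-1}))$. The missing ingredient, and the only place where the hypothesis that \cref{main for quotients} holds for $G_{n-1}$ enters the base case, is the paper's use of \cref{same index}: since $|G_{n-1}:\gamma_p(G_{n-1})|=p^{\,p}=|G_2:\gamma_p(G_2)|$, one gets $\St_{G_{n-1}}(2)\le\gamma_p(G_{n-1})$, hence $B(\St_{G_{n-1}}(2))\le B(\gamma_p(G_{n-1}))=\gamma_{p^2-p+1}(W(G_{n-1}))\le\gamma_{p+3}(W(G_{n-1}))$ because $p$ is odd. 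Without this (or an equivalent bound), neither your direct recomputation nor the lift from $G_3$ closes; with it, both do, the paper's version being the shorter.
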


\begin{proof}
(i)
We use induction on $i$.
Note that it suffices to prove \eqref{psii-1 modulo}.
Indeed, since $x(j)\in \gamma_j(G_{n-1})$, we have
$\boldsymbol{\lambda}_k(x(j))\in \gamma_i(W(G_{n-1}))$ by (iii) of \cref{LCS of W(G) detail}.
Thus \eqref{psii-1 modulo} implies that $\psi(\gamma_{i-1}(G_n))\le \gamma_i(W(G_{n-1}))$.

We first deal with the case $i=p+2$, relying on the information about $G_3$ gathered in
\cref{LCS G3}.
We know that
\begin{align*}
\gamma_{p+2}(W(G_2))
&=
\psi(\gamma_{p+1}(G_3))
=
\psi(\langle x(p+1) \rangle \, \gamma_{p+2}(G_3))
\\
&=
\langle \, \psi(x(p+1)) \, \rangle \, \gamma_{p+3}(W(G_2)).
\end{align*}
Since $\psi(\St_{G_n}(3))=B(\St_{G_{n-1}}(2))$ by \eqref{psi stG(n)}, it follows that
\begin{equation}
\label{gammap+2 and psi}
\begin{split}
\gamma_{p+2}(W(G_{n-1})) \, B(\St_{G_{n-1}}(2))
&=
\psi(\gamma_{p+1}(G_n)) \, B(\St_{G_{n-1}}(2))
\\
&=
\langle \, \psi(x(p+1)) \, \rangle \, \gamma_{p+3}(W(G_{n-1})) \, B(\St_{G_{n-1}}(2)).
\end{split}
\end{equation}
Now since \cref{main for quotients} holds for $G_{n-1}$, we have
\[
|G_{n-1}:\gamma_p(G_{n-1})|
=
p^{\,p}
=
|G_2:\gamma_p(G_2)|,
\]
and then \cref{same index} implies that $\St_{G_{n-1}}(2)\le \gamma_p(G_{n-1})$.
Consequently
\[
B(\St_{G_{n-1}}(2)) \le B(\gamma_p(G_{n-1})) = \gamma_{p^2-p+1}(W(G_{n-1})) \le \gamma_{p+3}(W(G_{n-1})),
\]
since $p$ is odd.
Hence we deduce from \eqref{gammap+2 and psi} that
\[
\gamma_{p+2}(W(G_{n-1}))
=
\psi(\gamma_{p+1}(G_n)) \, \gamma_{p+3}(W(G_{n-1}))
=
\langle \, \psi(x(p+1)) \, \rangle \, \gamma_{p+3}(W(G_{n-1})).
\]
On the other hand, since the image of $x(2)$ generates the quotient $\gamma_2(G)/\gamma_3(G)$,
we have
\[
\gamma_{p+2}(W(G_{n-1}))
=
\langle \boldsymbol{\lambda}_2(x(2)) \rangle \gamma_{p+3}(W(G_{n-1}))
\]
by (iii) of \cref{LCS of W(G) detail}.
This completes the proof of \eqref{psii-1 modulo} in the case $i=p+2$.

Assume now that the result is true for $i$ and let us prove it for $i+1$.
By the induction hypothesis, \eqref{psii-1 modulo} holds true.
As a consequence,
\begin{equation}
\label{comm+induction hypothesis}
\begin{split}
\psi(\gamma_i(G_n))
=
[\psi(\gamma_{i-1}(G_n)),\psi(G_n)]
&\equiv
[\psi(x(i-1)),\psi(G_n)]
\\
&\equiv
[\boldsymbol{\lambda}_k(x(j)),\psi(G_n)]
\mod
\gamma_{i+2}(W(G_{n-1})).
\end{split}
\end{equation}
Now, since $\psi(\gamma_{i-1}(G_n))\le \gamma_i(W(G_{n-1}))$ by induction, we get in particular
$\psi(x(i-1))\in \gamma_i(W(G_{n-1}))$.
Then, by \eqref{comm psi gn = comm psi aeb}, we obtain
\[
\psi(\gamma_i(G_n))
\equiv
\langle \psi(x(i)) \rangle
\mod
\gamma_{i+2}(W(G_{n-1})),
\]
since $x(i)=[x(i-1),a^{\varepsilon}b]$.
This is one of the congruences in \eqref{psii-1 modulo}, written for $i+1$.
Let us prove the other congruence.
On the one hand, if $1\le k\le p-1$ then by \eqref{comm+induction hypothesis} and (i) of \cref{comm g with psi gn},
\[
\psi(\gamma_i(G_n))
\equiv
\langle \Delta(\boldsymbol{\lambda}_k(x(j))) \rangle
=
\langle \boldsymbol{\lambda}_{k+1}(x(j)) \rangle
\mod
\gamma_{i+2}(W(G_{n-1})).
\]
On the other hand, if $k=p$ then note that, by \cref{lambdai(g)} and \eqref{lambdapr mod p},
we have
\[
\boldsymbol{\lambda}_p(x(j))
\equiv
(x(j),\overset{p}{\ldots},x(j))
\mod
\gamma_{i+1}(W(G_{n-1})),
\]
since $\gamma_j(G_{n-1})/\gamma_{j+1}(G_{n-1})$ is of exponent $p$.
Then, by (ii) of \cref{comm g with psi gn},
\[
\psi(\gamma_i(G_n))
\equiv
\langle ([x(j),a^{\varepsilon}b],1,\overset{p-1}{\ldots},1) \rangle
=
\langle \boldsymbol{\lambda}_1(x(j+1)) \rangle
\mod
\gamma_{i+2}(W(G_{n-1})),
\]
as desired.

(ii)
By (i), we only need to determine the order of $\boldsymbol{\lambda}_k(x(j))$ modulo
$\gamma_{i+1}(W(G_{n-1}))$.
According to (iii) of \cref{LCS of W(G) detail}, this order coincides with the order of $x(j)$ modulo
$\gamma_{j+1}(G_{n-1})$.
Now the latter is equal to $p$, since \cref{main for quotients} holds for $G_{n-1}$.
\end{proof}

Thus the image of $\gamma_{i-1}(G_n)$ under $\psi$ covers $\gamma_i(W(G_{n-1}))$ modulo $\gamma_{i+1}(W(G_{n-1}))$
provided that $|\gamma_i(W(G_{n-1})):\gamma_{i+1}(W(G_{n-1}))|=p$, but falls short if this index is $p^2$.
As we next see, in this latter case $\gamma_{i-2}(G_n)$ will do.

\begin{proposition}
\label{gamma WG vs gamma Gn 2}
Let $G$ be a GGS-group of FG-type, and assume that \cref{main for quotients} holds for $G_{n-1}$,
where $n\ge 4$.
If $i\ge p+1$ is such that $|\gamma_i(W(G_{n-1})):\gamma_{i+1}(W(G_{n-1}))|=p^2$ then the following hold:
\begin{enumerate}
\item
If we write $i=(j-1)p+k$ with $i\ge 1$ and $1\le k\le p$, then $j$ belongs to a $p^2$-interval
$[\ell,r-1]$ of $G_{n-1}$.
\item
We have
\begin{equation}
\label{gammai WG mod psii-1}
\gamma_i(W(G_{n-1}))
\equiv
\langle \psi(y_{(\ell-1)p-1}(i-2)) \rangle
\mod \psi(\gamma_{i-1}(G_n)) \gamma_{i+1}(W(G_{n-1})).
\end{equation}
In particular,
\begin{equation}
\label{gammai WG contained in psi gamma i-2}
\gamma_i(W(G_{n-1})) \le \psi(\gamma_{i-2}(G_n)) \, \gamma_{i+1}(W(G_{n-1})).
\end{equation}
\end{enumerate}
\end{proposition}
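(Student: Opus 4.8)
Part (i) is immediate: by \eqref{index gammai WG = index gammaj G} the index $|\gamma_i(W(G_{n-1})):\gamma_{i+1}(W(G_{n-1}))|$ coincides with $|\gamma_j(G_{n-1}):\gamma_{j+1}(G_{n-1})|$, which, since \cref{main for quotients} is assumed for $G_{n-1}$, equals $p^2$ exactly when $j$ lies in a $p^2$-interval $[\ell,r-1]=[\ell(m),r(m)-1]$ with $0\le m\le n-3$; from now on I write $\ell=\ell(m)$, $r=r(m)$. For (ii) the first point is purely formal: the recursion for $\ell$ gives $(\ell-1)p-1=p(\ell(m)-1)-1=\ell(m+1)$, so the claimed generator is $y_{\ell(m+1)}(i-2)$, and since the relevant range of $i$ starts at $(\ell-1)p+1=\ell(m+1)+2$ a weight count gives $\ell(m+1)\le i-2$, so $y_{\ell(m+1)}(i-2)\in\gamma_{i-2}(G_n)$ is genuinely defined.

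The plan is then to locate the missing generator. By \cref{LCS of W(G) detail}(iii), $\gamma_i(W(G_{n-1}))/\gamma_{i+1}(W(G_{n-1}))\cong \gamma_j(G_{n-1})/\gamma_{j+1}(G_{n-1})$ via $g\mapsto\boldsymbol{\lambda}_k(g)$, and by the induction hypothesis the latter is generated by $x(j)$ and $y_\ell(j)$; hence the target quotient is generated by $\boldsymbol{\lambda}_k(x(j))$ and $\boldsymbol{\lambda}_k(y_\ell(j))$. On the other hand \cref{gamma WG vs gamma Gn 1} shows that $\psi(\gamma_{i-1}(G_n))\le\gamma_i(W(G_{n-1}))$ covers exactly $\langle\boldsymbol{\lambda}_k(x(j))\rangle$, an index-$p$ subgroup of the order-$p^2$ quotient. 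Therefore it suffices to prove that $\psi(y_{\ell(m+1)}(i-2))$ lies in $\gamma_i(W(G_{n-1}))$ and that, modulo $\langle\boldsymbol{\lambda}_k(x(j))\rangle\gamma_{i+1}(W(G_{n-1}))$, it equals a nonzero power of $\boldsymbol{\lambda}_k(y_\ell(j))$; this yields both \eqref{gammai WG mod psii-1} and \eqref{gammai WG contained in psi gamma i-2} at once.

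I would establish this last claim by induction on $i$ across the $p^2$-block, equivalently on the number $t$ of trailing factors $a^\varepsilon b$ in $y_{\ell(m+1)}(i-2)$. The inductive step closely mirrors the mechanism already used in \cref{gamma WG vs gamma Gn 1}: from $y_{\ell(m+1)}(i-2)=[y_{\ell(m+1)}(i-3),a^\varepsilon b]$ and \cref{comm g with psi gn}, taking the commutator with $a^\varepsilon b$ after $\psi$ becomes the map $\Delta$ when $k\ge 2$, sending $\boldsymbol{\lambda}_{k-1}(y_\ell(j))$ to $\boldsymbol{\lambda}_k(y_\ell(j))$ by \cref{LCS of W(G) detail}; when $k=1$ the previous index has parameter $p$, the commutator becomes a first-component one, and, writing $\boldsymbol{\lambda}_p(y_\ell(j-1))\equiv(y_\ell(j-1),\dots,y_\ell(j-1))$ via \cref{lambdai(g)} and \eqref{lambdapr mod p}, it sends this to $\boldsymbol{\lambda}_1([y_\ell(j-1),a^\varepsilon b])=\boldsymbol{\lambda}_1(y_\ell(j))$. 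In both cases the $x$-part stays inside the subgroup we quotient by, and only a nonzero scalar $\varepsilon$ is introduced. The induction never leaves the $p^2$-block: as soon as $j$ would exit $[\ell,r-1]$ the index at $i$ drops to $p$ and the proposition no longer applies.

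The real work, and the main obstacle, is the base case $t=0$, that is $i=\ell(m+1)+2=(\ell-1)p+1$, where $j=\ell$, $k=1$ and $y_{\ell(m+1)}(i-2)=[x(\ell(m+1)-1),b]$. Here I would compute $\psi$ by hand. By \eqref{psii-1 modulo}, $\psi(x(\ell(m+1)-1))\equiv\boldsymbol{\lambda}_{p-1}(x(\ell-1))$ modulo $\gamma_{\ell(m+1)+1}(W(G_{n-1}))$, and \cref{lambdai(g)} together with \eqref{binomial p-2} makes this tuple $\equiv(g^{-1},g^{-2},\dots,g^{-(p-1)},1)$ with $g=x(\ell-1)$. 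Commuting componentwise with $\psi(b)=(a^{e_1},\dots,a^{e_{p-1}},b)$ and reducing to the first component by \cref{reduction to first component}, all entries collapse to the single power $[g,a]^{-\delta}$, with $\delta=\sum_r re_r$; note that the ``weight jump'' by $2$ underlying the sandwich is automatic, since this commutator with $b$ lands in $B(\gamma_\ell(G_{n-1}))=\gamma_{(\ell-1)p+1}(W(G_{n-1}))=\gamma_i(W(G_{n-1}))$. Finally, expanding $x(\ell)=[x(\ell-1),a^\varepsilon b]\equiv [x(\ell-1),a]^\varepsilon\,y_\ell(\ell)$ modulo $\gamma_{\ell+1}(G_{n-1})$ lets me replace $[g,a]$ by $x(\ell)^{1/\varepsilon}y_\ell(\ell)^{-1/\varepsilon}$, so that $\psi([x(\ell(m+1)-1),b])\equiv\boldsymbol{\lambda}_1(y_\ell(\ell))^{\delta/\varepsilon}$ modulo $\langle\boldsymbol{\lambda}_1(x(\ell))\rangle\gamma_{i+1}(W(G_{n-1}))$. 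Since $G$ is of FG-type we have $\delta\ne 0$ and $\varepsilon\ne 0$, so $\delta/\varepsilon\ne 0$ and the missing generator is produced; this is precisely where the hypothesis $\delta\ne 0$ is indispensable, just as in \cref{position commutators}.
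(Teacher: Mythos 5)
Your proposal is correct and follows essentially the same route as the paper's proof: an induction along the $p^2$-block whose base case at $i=(\ell-1)p+1$ rests on the explicit computation of $[\boldsymbol{\lambda}_{p-1}(x(\ell-1)),\psi(b)]$ collapsing to $([x(\ell-1),a^{-\delta}],1,\overset{p-1}{\ldots},1)$ and the substitution $[x(\ell-1),a]\equiv y_\ell(\ell)^{-\varepsilon^{-1}}$ modulo $\langle x(\ell)\rangle\gamma_{\ell+1}(G_{n-1})$, with $\delta\ne 0$ supplying the missing generator, and whose inductive step invokes \cref{comm g with psi gn} to turn commutation with $a^{\varepsilon}b$ into $\Delta$ (for $k\le p-1$) or a first-component commutator (for $k=p$). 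The only differences are presentational (you track the generator $\boldsymbol{\lambda}_k(y_\ell(j))$ directly where the paper shows $[\gamma_i(W(G_{n-1})),\psi(G_n)]$ covers $\gamma_{i+1}(W(G_{n-1}))$ modulo $\gamma_{i+2}(W(G_{n-1}))$, and you leave implicit the absorption of $[\gamma_{i-1}(W(G_{n-1})),\psi(b)]$ into $\psi(\gamma_{i-1}(G_n))\gamma_{i+1}(W(G_{n-1}))$, which the paper derives from \cref{gamma WG vs gamma Gn 1}), and these are equivalent given the results you cite.
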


\begin{proof}
(i)
This follows immediately from \eqref{index gammai WG = index gammaj G}.

(ii)
Clearly, it suffices to prove the first assertion.
By (i), we have $i\in [(\ell-1)p+1,rp]$.
We use induction on $i$ in this range of values.
Let us first prove the result for $i=(\ell-1)p+1$.
Note that the condition $i\ge p+1$ implies that $\ell>1$.
Since $[\ell,r-1]$ is a $p^2$-interval for $G_{n-1}$, we have $|\gamma_{\ell-1}(G_{n-1}):\gamma_{\ell}(G_{n-1})|=p$.
Also $\gamma_{\ell-1}(G_{n-1})=\langle x(\ell-1) \rangle \gamma_{\ell}(G_{n-1})$, since $G_{n-1}$
satisfies \cref{main for quotients}.
From (iii) of \cref{LCS of W(G) detail}, the quotient $\gamma_{i-2}(W(G_{n-1}))/\gamma_{i-1}(W(G_{n-1}))$
can be generated by the image of the element $\mathbf{g}=\boldsymbol{\lambda}_{p-1}(x(\ell-1))$.
Note that, by \cref{lambdai(g)} and \eqref{binomial p-2},
\[
\mathbf{g}
\equiv
( x(\ell-1)^{-1}, x(\ell-1)^{-2}, \ldots, x(\ell-1)^{-(p-1)}, 1 )
\mod \gamma_i(W(G_{n-1})),
\]
since $\gamma_i(W(G_{n-1}))=B(\gamma_{\ell}(G_{n-1}))$ and $\gamma_{\ell-1}(G_{n-1})/\gamma_{\ell}(G_{n-1})$ is of
exponent $p$.
Then we have
\begin{equation}
\label{first generator}
\begin{split}
[\mathbf{g},\psi(b)]
&\equiv
( [x(\ell-1)^{-1},a^{e_1}], [x(\ell-1)^{-2},a^{e_2}], \ldots, [x(\ell-1)^{-(p-1)},a^{e_{p-1}}], 1 )
\\
&\equiv
( [x(\ell-1),a]^{-e_1}, [x(\ell-1),a]^{-2e_2}, \ldots, [x(\ell-1),a]^{-(p-1)e_{p-1}}, 1 )
\\
&\equiv
( [x(\ell-1),a^{-\delta}], 1, \overset{p-1}{\ldots}, 1)
\mod \gamma_{i+1}(W(G_{n-1})),
\end{split}
\end{equation}
where the last congruence follows from \cref{reduction to first component}.
Note that $[\mathbf{g},\psi(b)]\in \gamma_i(W(G_{n-1}))$.

On the other hand, since $G_{n-1}$ satisfies \cref{main for quotients}, we have
\[
\gamma_{\ell}(G_{n-1})=\langle x(\ell),y_{\ell}(\ell) \rangle \, \gamma_{\ell+1}(G_{n-1}).
\]
As a consequence,
\begin{equation}
\label{gammai WG generating tuples}
\begin{split}
\gamma_i(W(G_{n-1}))
&=
\langle (x(\ell),1,\overset{p-1}{\ldots},1),(y_{\ell}(\ell),1,\overset{p-1}{\ldots},1) \rangle
\gamma_{i+1}(W(G_{n-1}))
\\
&=
\langle (y_{\ell}(\ell),1,\overset{p-1}{\ldots},1) \rangle
\psi(\gamma_{i-1}(G_n)) \gamma_{i+1}(W(G_{n-1})),
\end{split}
\end{equation}
where the first equality follows from (iii) of \cref{LCS of W(G) detail}, and the second, from
\eqref{psii-1 modulo}.
Now, since $x(\ell)=[x(\ell-1),a^{\varepsilon}b]$ and $y_{\ell}(\ell)=[x(\ell-1),b]$, we have
\[
[x(\ell-1),a]
\equiv
y_{\ell}(\ell)^{-\varepsilon^{-1}}
\mod \langle x(\ell) \rangle \gamma_{\ell+1}(G_{n-1}),
\]
where $\varepsilon^{-1}$ denotes the inverse of $\varepsilon$ in $\F_p^{\times}$.
Since $G$ is of FG-type, we have $\delta\ne 0$ in $\F_p$, and then it follows that
\[
\langle [x(\ell-1),a^{-\delta}] \rangle
\equiv
\langle y_{\ell}(\ell) \rangle
\mod \langle x(\ell) \rangle \gamma_{\ell+1}(G_{n-1}).
\]
This, together with \eqref{first generator} and \eqref{gammai WG generating tuples}, implies that
\[
\gamma_i(W(G_{n-1}))
=
\langle [\mathbf{g},\psi(b)] \rangle \,
\psi(\gamma_{i-1}(G_n)) \, \gamma_{i+1}(W(G_{n-1})).
\]
Finally, since $|\gamma_{i-2}(W(G_{n-1})):\gamma_{i-1}(W(G_{n-1}))|=|\gamma_{i-1}(W(G_{n-1})):\gamma_i(W(G_{n-1}))|=p$,
we have
\[
\gamma_{i-2}(W(G_{n-1})) = \langle \psi(x(i-3)) \rangle \, \gamma_{i-1}(W(G_n))
\]
and
\[
\gamma_{i-1}(W(G_{n-1})) = \psi(\gamma_{i-2}(G_n)) \, \gamma_i(W(G_n)),
\]
by taking into account \cref{gamma WG vs gamma Gn 1}.
From the latter equality, $[\gamma_{i-1}(W(G_{n-1})),\psi(b)]\le \psi(\gamma_{i-1}(G_n))\gamma_{i+1}(W(G_{n-1}))$, and
then the former equality yields
\[
\langle [\mathbf{g},\psi(b)] \rangle
\equiv
\langle [\psi(x(i-3)),\psi(b)] \rangle
\mod \psi(\gamma_{i-1}(G_n))\gamma_{i+1}(W(G_{n-1})).
\]
Since $[\psi(x(i-3)),\psi(b)]=\psi(y_{i-2}(i-2))$ and $i-2=(\ell-1)p-1$,
this concludes the proof for the base of the induction.

Now assume that the result is true for $i=(j-1)p+k$ and let us prove it for $i+1$.
Then, on the one hand, we have
\begin{align*}
[\gamma_i(W(G_{n-1})),\psi(G_n)]
&\equiv
[\gamma_i(W(G_{n-1})),\psi(a^{\varepsilon}b)]
\\
&\equiv
\langle \psi(y_{(\ell-1)p-1}(i-1)) \rangle
\mod
\psi(\gamma_i(G_n)) \, \gamma_{i+2}(W(G_{n-1})),
\end{align*}
by using \eqref{comm gammai psi gn} and the induction hypothesis.
Thus it suffices to show that
\[
[\gamma_i(W(G_{n-1})),\psi(G_n)]
\equiv
\gamma_{i+1}(W(G_{n-1}))
\mod \gamma_{i+2}(W(G_{n-1})).
\]

Recall from \eqref{comm gammai psi gn 1=k=p-1} and \eqref{comm gammai psi gn k=p} that
\begin{align*}
[\gamma_i(W(G_{n-1}))
&,\psi(G_n)] \gamma_{i+2}(W(G_{n-1}))
\\
&=
\begin{cases}
\gamma_{i+1}(W(G_{n-1})),
&
\text{if $1\le k\le p-1$,}
\\[5pt]
B([\gamma_j(G_{n-1}),a^{\varepsilon}b]) \gamma_{i+2}(W(G_{n-1})),
&
\text{if $k=p$.}
\end{cases}
\end{align*}
Thus we are done if $1\le k\le p-1$.
Assume now that $k=p$.
Then since $[\ell,r-1]$ is a $p^2$-interval for $G_{n-1}$ and $\ell\le j\le r-1$, it follows that $j+1$ can not
be the starting index of a $p^2$-interval and then, by \eqref{almost all gens with aepsilonb}, we have
\[
B([\gamma_j(G_{n-1}),a^{\varepsilon}b])
\equiv
B(\gamma_{j+1}(G_{n-1}))
\mod
B(\gamma_{j+2}(G_{n-1})).
\]
Since 
\[
B(\gamma_{j+1}(G_{n-1}))=\gamma_{jp+1}(W(G_{n-1}))=\gamma_{i+1}(W(G_{n-1}))
\]
and $B(\gamma_{j+2}(G_{n-1}))$ is contained in $\gamma_{i+2}(W(G_{n-1}))$, it follows that
\[
B([\gamma_j(G_{n-1}),a^{\varepsilon}b]) \gamma_{i+2}(W(G_{n-1})) = \gamma_{i+1}(W(G_{n-1})),
\]
and the result holds also in this case.
\end{proof}

\pagebreak

At this point, we can combine \cref{gamma WG vs gamma Gn 1} and \cref{gamma WG vs gamma Gn 2} in order to
prove \cref{main for quotients}.

\begin{proof}[Proof of \cref{main for quotients}]
As already mentioned, we use induction on $n$.
The theorem holds for $n=3$ by the results in \cref{sec:n at most 3}.
Now assume that $n\ge 4$ and that \cref{main for quotients} is true for all values smaller than $n$.

Since the result holds for $G_{n-1}$, we can apply \cref{LCS of W(G)} and \cref{LCS of W(G) detail}
to obtain information about the lower central series of $W(G_{n-1})$.
In particular, we deduce that the nilpotency class of $W(G_{n-1})$ is $p\cdot c(n-1)$ and that
\begin{equation}
\label{index p lower part}
|\gamma_i(W(G_{n-1})):\gamma_{i+1}(W(G_{n-1}))| = p
\end{equation}
for $p(r(n-3)-1)+1\le i\le p\cdot c(n-1)$.

Now we can use \cref{gamma WG vs gamma Gn 1} to compare the lower central series of $W(G_{n-1})$
with the image of the lower central series of $G_n$ under $\psi$.
From \eqref{index p lower part}, we get
\begin{equation}
\label{relation lower part}
\gamma_i(W(G_{n-1})) = \psi(\gamma_{i-1}(G_n)) \, \gamma_{i+1}(W(G_{n-1}))
\end{equation}
for $p(r(n-3)-1)+1\le i\le p\cdot c(n-1)$.
Since $\gamma_{p\cdot c(n-1)+1}(W(G_{n-1}))=1$, it follows that
\begin{equation}
\label{last terms equal}
\gamma_{p\cdot c(n-1)}(W(G_{n-1})) = \psi(\gamma_{p\cdot c(n-1)-1}(G_n))
\end{equation}
is of order $p$, and consequently the nilpotency class of $G_n$ is
$p\cdot c(n-1)-1$, which by induction coincides with the expected value of $c(n)$, namely
$p^{n-1}-p^{n-3}-\cdots-p-1$.

On the other hand, starting from \eqref{last terms equal} and using repeatedly \eqref{relation lower part},
we obtain that
\begin{equation}
\label{coincidence lower part}
\gamma_i(W(G_{n-1})) = \psi(\gamma_{i-1}(G_n))
\end{equation}
for $r(n-2)+2\le i\le c(n)+1$.
(Note that $p(r(n-3)-1)+1=r(n-2)+2$ from the recursive definition of $r(n)$, since $n\ge 4$.)
By \eqref{index p lower part}, these subgroups form a chain with consecutive indices equal to $p$;
see \cref{figure p-interval}.
From \eqref{coincidence lower part} and \eqref{index in psii-1}, we also get
$|\psi(\gamma_{i-1}(G_n)):\psi(\gamma_i(G_n))|=p$
for $i=r(n-2)+1$.
On the other hand, from \eqref{psii-1 modulo} we have
\[
\psi(\gamma_{i-1}(G_n)) \equiv \langle \psi(x(i-1)) \rangle \mod \gamma_{i+1}(W(G_{n-1}))
\]
for $i\ge r(n-2)+1$, and since $\gamma_{i+1}(W(G_{n-1}))=\psi(\gamma_i(G_n))$ by \eqref{coincidence lower part},
it follows that $\psi(\gamma_{i-1}(G_n))=\langle \psi(x(i-1)) \rangle \psi(\gamma_i(G_n))$.
Thus we conclude that
\[
|\gamma_i(G_n):\gamma_{i+1}(G_n)|=p
\]
and
\[
\gamma_i(G_n) = \langle x(i) \rangle \gamma_{i+1}(G_n)
\]
for $r(n-2)\le i\le c(n)$.
This proves in particular that $[r(n-2),c(n)]$ is a $p$-interval for $G_n$.

\vspace{8pt}

\begin{figure}[H]
        \centering
{\footnotesize\begin{tikzpicture}[scale=7]
        \draw[-, thick] (0,0) -- (0,0.84);
        \foreach \x/\xtext in {0/$1=B(\gamma_{c(n-1)+1}(G_{n-1}))=\gamma_{c(n)+2}(W(G_{n-1}))$,0.1/,0.39/$\gamma_{i}(W(G_{n-1}))$,0.25/$\gamma_{i+1}(W(G_{n-1}))$,0.54/,0.64/$B(\gamma_{r(n-3)}(G_{n-1}))=\gamma_{r(n-2)+2}(W(G_{n-1}))$,0.84/$\gamma_{r(n-2)+1}(W(G_{n-1}))$}
        \draw[thick] (0.5pt, \x) -- (-0.5pt,\x) node[left] {\xtext};
        \foreach \y/\ytext in {0/$\psi(\gamma_{c(n)+1}(G_{n}))$,0.1/,0.25/$\psi(\gamma_{i}(G_{n}))$,0.39/$\psi(\gamma_{i-1}(G_{n}))$,.64/$\psi(\gamma_{r(n-2)+1}(G_{n}))$,0.74/$\psi(\gamma_{r(n-2)}(G_{n}))$}
        \draw[thick] (0.5pt, \y) node[right] {\ytext} -- (-0.5pt,\y);
        \draw(0.01,0.05) node[right]{$p$};
        \draw(0.01,0.32) node[right]{$p$};
        \draw(0.01,0.59) node[right]{$p$};
        \draw(0.01,0.69) node[right]{$p$};
        \draw(0.3,0.32) node[right]{$r(n-2)+2\leq i\leq c(n)+1 $};
         \draw[decorate,decoration={brace,mirror}] (-0.05,0.79) -- (-0.05,0.69) node[midway,left=0.1em]{$p^2$};
  \filldraw[black] (0,0.32) circle (0.25pt) node[anchor=east]{$\psi(x(i-1))$};
\end{tikzpicture}}
        \caption{
                Position of the relevant subgroups in the interval $[r(n-2),c(n)]$.
        }
        \label{figure p-interval}
\end{figure}
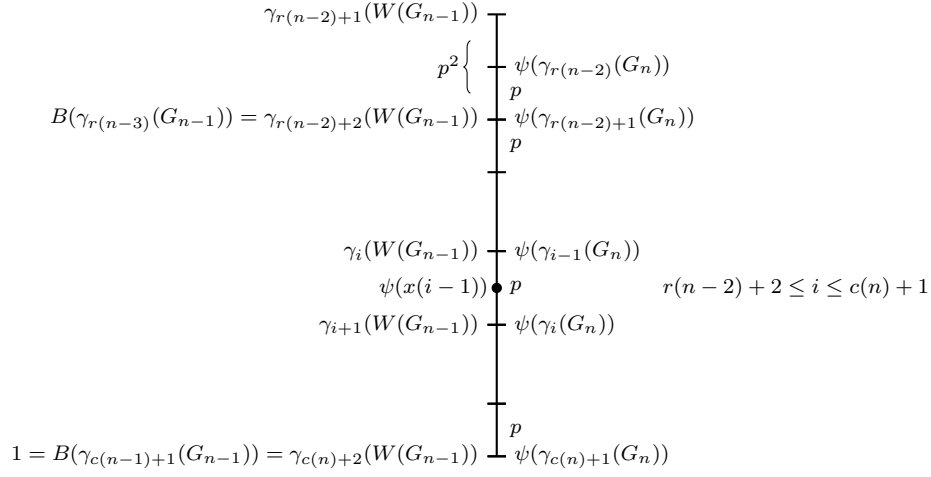

Now we examine the interval $[\ell(n-2),r(n-2)-1]$.
By \eqref{index gammai WG = index gammaj G} and the induction hypothesis, we have
\[
|\gamma_i(W(G_{n-1})):\gamma_{i+1}(W(G_{n-1}))| = p^2
\]
for $p(\ell(n-3)-1)+1\le i\le p(r(n-3)-1)$ or, what is the same, for
$\ell(n-2)+2\le i\le r(n-2)+1$.
Thus from \eqref{gammai WG contained in psi gamma i-2}we get
\begin{equation}
\label{inclusion in gammai-2 1}
\gamma_i(W(G_{n-1})) \le \psi(\gamma_{i-2}(G_n)) \, \gamma_{i+1}(W(G_{n-1}))
\end{equation}
for $i$ in this range of values.
On the other hand, from \eqref{coincidence lower part},
\[
\gamma_{r(n-2)+2}(W(G_{n-1}))
\le
\psi(\gamma_{r(n-2)}(G_n)).
\]
Combining this with \eqref{inclusion in gammai-2 1} and using backward induction on $i$,
it follows that
\begin{equation}
\label{inclusion in gammai-2 2}
\gamma_i(W(G_{n-1})) \le \psi(\gamma_{i-2}(G_n))
\end{equation}
for $\ell(n-2)+2\le i\le r(n-2)+1$.
This inclusion and \eqref{index in psii-1} imply that
\[
|\psi(\gamma_{i-2}(G_n)):\gamma_i(W(G_{n-1}))|
=
|\psi(\gamma_{i-1}(G_n)):\gamma_{i+1}(W(G_{n-1}))|
=
p
\]
and, since $\psi(\gamma_{i-1}(G_n))\le \gamma_i(W(G_{n-1}))$ and
$|\gamma_i(W(G_{n-1})):\gamma_{i+1}(W(G_{n-1}))|=p^2$, it follows that
\[
|\psi(\gamma_{i-2}(G_n)):\psi(\gamma_{i-1}(G_n))| = p^2.
\]
Also, by \eqref{gammai WG mod psii-1},
\[
\gamma_i(W(G_{n-1}))
\equiv
\langle \psi(y_{\ell(n-2)}(i-2)) \rangle
\mod \psi(\gamma_{i-1}(G_n)).
\]
Since
\[
\psi(\gamma_{i-2}(G_n))
\equiv
\langle \psi(x(i-2)) \rangle
\mod
\gamma_i(W(G_{n-1}))
\]
by \eqref{psii-1 modulo}, it follows that
\[
\psi(\gamma_{i-2}(G_n))
=
\langle \psi(x(i-2)), \psi(y_{\ell(n-2)}(i-2)) \rangle \, \psi(\gamma_{i-1}(G_n)).
\]
We conclude that
\[
|\gamma_i(G_n):\gamma_{i+1}(G_n)|=p^2
\quad
\text{for $\ell(n-2)\le i\le r(n-2)-1$,}
\]
and that
\[
\gamma_i(G_n)
=
\langle x(i), y_{\ell(n-2)}(i) \rangle \, \gamma_{i+1}(G_n).
\]
In particular, $[\ell(n-2),r(n-2)-1]$ is a $p^2$-interval for $G_n$.

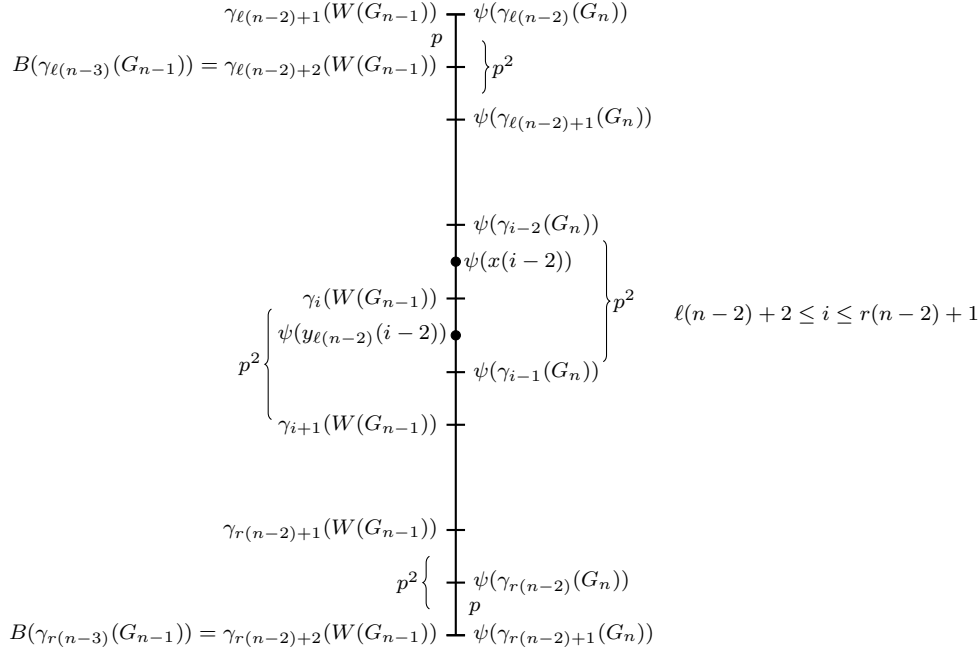
\begin{figure}[H]
        \centering
        {\footnotesize\begin{tikzpicture}[scale=7]
                \draw[-, thick] (0,0) -- (0,1.18);
                \foreach \x/\xtext in {0/$B(\gamma_{r(n-3)}(G_{n-1}))=\gamma_{r(n-2)+2}(W(G_{n-1}))$,0.2/$\gamma_{r(n-2)+1}(W(G_{n-1}))$,0.4/$\gamma_{i+1}(W(G_{n-1}))$,0.64/$\gamma_{i}(W(G_{n-1}))$,1.08/$B(\gamma_{\ell(n-3)}(G_{n-1}))=\gamma_{\ell(n-2)+2}(W(G_{n-1}))$,1.18/$\gamma_{\ell(n-2)+1}(W(G_{n-1}))$}
                \draw[thick] (0.5pt, \x) -- (-0.5pt,\x) node[left] {\xtext};
                \foreach \y/\ytext in {0.0/$\psi(\gamma_{r(n-2)+1}(G_{n}))$,0.1/$\psi(\gamma_{r(n-2)}(G_{n}))$,0.5/$\psi(\gamma_{i-1}(G_{n}))$,0.78/$\psi(\gamma_{i-2}(G_{n}))$,0.98/$\psi(\gamma_{\ell(n-2)+1}(G_{n}))$,1.18/$\psi(\gamma_{\ell(n-2)}(G_{n}))$}
                \draw[thick] (0.5pt, \y) node[right] {\ytext} -- (-0.5pt,\y);
                \draw(0.01,0.05) node[right]{$p$};
                \draw(-0.01,1.13) node[left]{$p$};
                \draw(0.4,0.61) node[right]{$\ell(n-2)+2\leq i\leq r(n-2)+1 $};
                \draw[decorate,decoration={brace}] (-0.05,0.05) -- (-0.05,0.15) node[midway,left=0.1em]{$p^2$};
                \draw[decorate,decoration={brace}] (-0.35,0.41) -- (-0.35,0.62) node[midway,left=0.1em]{$p^2$};
                \draw[decorate,decoration={brace,mirror}] (0.28,0.52) -- (0.28,0.75) node[midway,right=0.1em]{$p^2$};
                \draw[decorate,decoration={brace,mirror}] (0.05,1.03) -- (0.05,1.13) node[midway,right=0.1em]{$p^2$};
      \filldraw[black] (0,0.57) circle (0.25pt) node[anchor=east]{$\psi(y_{\ell(n-2)}(i-2))$};
      \filldraw[black] (0,0.71) circle (0.25pt) node[anchor=west]{$\psi(x(i-2))$};
        \end{tikzpicture}}
        \caption{
                Position of the relevant subgroups in the interval $[\ell(n-2),r(n-2)-1]$.
        }
        \label{figure p2-interval}
\end{figure}

After having proved \cref{main for quotients} for the intervals $[r(n-2),c(n)]$ and $[\ell(n-2),r(n-2)-1]$,
we could use a similar argument to deal alternatively with the rest of the intervals that appear in
Theorem A by backwards induction on the intervals.
However, we can complete the proof of \cref{main for quotients} in a cleaner way as follows,
by using only the standing induction on $n$ and relying on \cref{same index}.

By what we have proved so far and taking into account \eqref{order Gn}, we have
\[
\label{index gammal(n-2) Gn}
|G_n:\gamma_{\ell(n-2)}(G_n)|
=
\frac{p^{\,p^{n-1}+1}}{p^{\,2(r(n-2)-\ell(n-2))} \cdot p^{\,c(n)-r(n-2)+1}}
=
p^{\,p^{n-1}+2\ell(n-2)-r(n-2)-c(n)}.
\]
On the other hand, since \cref{main for quotients} holds for $G_{n-1}$ and $r(n-3)\le \ell(n-2)\le c(n-1)+1$
(note that $n\ge 4$ for the last inequality), we have
\[
|G_{n-1}:\gamma_{\ell(n-2)}(G_{n-1})|
=
\frac{p^{\,p^{n-2}+1}}{p^{\,c(n-1)-\ell(n-2)+1}}
=
p^{\,p^{n-2}+\ell(n-2)-c(n-1)}.
\]
Since $n\ge 4$, we have $r(n-2)-\ell(n-2)=p^{n-3}$, and so it follows that
$|G_n:\gamma_{\ell(n-2)}(G_n)|=|G_{n-1}:\gamma_{\ell(n-2)}(G_{n-1})|$.
By \cref{same index}, this means that $\St_{G_n}(n-1)\le \gamma_{\ell(n-2)}(G_n)$.
As a consequence, $|\gamma_i(G_n):\gamma_{i+1}(G_n)|=|\gamma_i(G_{n-1}):\gamma_{i+1}(G_{n-1})|$
for $1\le i\le \ell(n-2)-1$, and if $X\subseteq G$ is such that $\gamma_i(G_{n-1})=\langle X \rangle \, \gamma_{i+1}(G_{n-1})$,
then also $\gamma_i(G_n)=\langle X \rangle \, \gamma_{i+1}(G_n)$.
Since \cref{main for quotients} holds for $G_{n-1}$, this completes the proof of the result for $G_n$.
\end{proof}

Now we can easily prove the main theorem of the paper.

\begin{proof}[Proof of Theorem A]
Let $i$ be an arbitrary positive integer.
As we know, the quotients of consecutive terms of the lower central series of $G$ have all exponent $p$.
Hence $G/\gamma_{i+1}(G)$ is a $2$-generator nilpotent group of exponent at most $p^{i+1}$,
and so is finite.
(Alternatively, note that $\gamma_{i+1}(G)$ is non-trivial by using \cref{main for quotients}, or because
branch groups do not satisfy any laws, and take into account that $G$ is just infinite.) 
Now by Theorem A of \cite{Gustavo-Alejandra-Jone}, all GGS-groups with a non-constant defining vector have the congruence
subgroup property.
It follows that $\St_G(k)\le \gamma_{i+1}(G)$ for some $k$.
Consequently $|\gamma_i(G):\gamma_{i+1}(G)|=|\gamma_i(G_n):\gamma_{i+1}(G_n)|$ for every $n\ge k$.
Since the intervals of the form $[\ell(m),r(m))$ and $[r(m),\ell(m+1))$ form a disjoint cover of
$\N$, there is a unique $m\ge 0$ such that $i$ belongs to one of these intervals.
By \cref{main for quotients}, $|\gamma_i(G):\gamma_{i+1}(G)|=p$ or $p^2$, and $\gamma_i(G)/\gamma_{i+1}(G)$
can be generated either by $x(i)$ or by $x(i)$ and $y_{\ell(m)}(i)$, depending on the interval
in which $i$ lies.

On the other hand, let $X_i\subseteq G$ be the set of generators that we have determined for the quotient
$\gamma_i(G)/\gamma_{i+1}(G)$.
By \cref{main for quotients}, we have
\[
\gamma_i(G) \St_G(n) = \langle X_j \mid i\le j\le c(n) \rangle \St_G(n) 
\]
for every $n\in\N$.
Set $N_i=\langle X_i \rangle^G$, which is a subgroup of $\gamma_i(G)$.
Then, by definition of the sets $X_j$, we have $X_j\subseteq N_i$ for all $j\ge i$.
Consequently
\begin{equation}
\label{gamma_i and N_i equal mod stG(n)}
\gamma_i(G) \St_G(n) = N_i \St_G(n).
\end{equation}
Now, since $N_i$ is a non-trivial normal subgroup of $G$ and $G$ is just infinite and satisfies the congruence
subgroup property, we have $\St_G(n)\le N_i$ for some $n\in\N$.
We conclude from \eqref{gamma_i and N_i equal mod stG(n)} that $\gamma_i(G)=N_i$, as desired.
\end{proof}

Finally, the proof of Corollary B is an immediate consequence of the following standard result in
profinite group theory.
We include its proof for the convenience of the reader.
Take into account that, by definition, we consider $\gamma_i(\hat G)$ to be the \emph{closed} subgroup
generated by all $i$-fold commutators of elements of $\hat G$.

\begin{proposition}
\label{index of gammai equal}
Let $G$ be a residually finite group and let $\hat G$ be its profinite completion.
If $|G:\gamma_i(G)|$ is finite for some $i\ge 1$ then $|\hat G:\gamma_i(\hat G)|=|G:\gamma_i(G)|$.
\end{proposition}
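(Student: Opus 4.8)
The plan is to exhibit $\gamma_i(\hat G)$ as the kernel of a concrete continuous surjection of $\hat G$ onto the finite group $G/\gamma_i(G)$, and then simply read off the index. Since $\gamma_i(G)$ has finite index in $G$, the quotient map $q\colon G\to G/\gamma_i(G)=:Q$ is a homomorphism onto a finite group which is nilpotent of class at most $i-1$. By the universal property of the profinite completion, $q$ extends to a unique continuous homomorphism $\hat q\colon \hat G\to Q$ with $\hat q|_G=q$; as $G$ is dense in $\hat G$ and $q$ is onto, the image $\hat q(\hat G)$ is a closed subgroup containing $q(G)=Q$, so $\hat q$ is surjective. Writing $K=\ker\hat q$, we obtain $|\hat G:K|=|Q|=|G:\gamma_i(G)|$, and the whole proposition reduces to establishing $\gamma_i(\hat G)=K$.

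First I would dispatch the two easy inclusions. Since $\hat G/K\cong Q$ is nilpotent of class at most $i-1$, every $i$-fold commutator of elements of $\hat G$ maps to $1$ in $Q$ and hence lies in $K$; as $K$ is closed, the closed subgroup generated by these commutators satisfies $\gamma_i(\hat G)\subseteq K$. Conversely, by monotonicity of the lower central series under the inclusion $G\hookrightarrow\hat G$ we have $\gamma_i(G)\subseteq\gamma_i(\hat G)$, and since $\gamma_i(\hat G)$ is closed this gives $\overline{\gamma_i(G)}\subseteq\gamma_i(\hat G)$. It therefore remains only to prove the single equality $K=\overline{\gamma_i(G)}$, which will close the chain $\overline{\gamma_i(G)}\subseteq\gamma_i(\hat G)\subseteq K$ into $\overline{\gamma_i(G)}=\gamma_i(\hat G)=K$.

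The heart of the argument, and the one step that genuinely uses the topology, is the identity $K=\overline{\gamma_i(G)}$. Here $K$ is open (its quotient is finite), and because $\hat q|_G=q$ we have $G\cap K=\ker q=\gamma_i(G)$. The key point is the standard fact that for an open subgroup $K$ of a profinite group and a dense subgroup $G$ one always has $\overline{\,G\cap K\,}=K$: given $x\in K$, density of $G$ yields a net in $G$ converging to $x$, and since $xK=K$ is an open neighbourhood of $x$ this net eventually lies in $G\cap K$, whence $x\in\overline{G\cap K}$. Applying this with $G\cap K=\gamma_i(G)$ gives $\overline{\gamma_i(G)}=K$. I expect this density step to be the only real obstacle; the remainder is formal bookkeeping with the universal property and the monotonicity of $\gamma_i$. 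Combining the three inclusions yields $\gamma_i(\hat G)=K$, and therefore $|\hat G:\gamma_i(\hat G)|=|\hat G:K|=|G:\gamma_i(G)|$, as claimed.
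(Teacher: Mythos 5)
Your proof is correct and follows essentially the same route as the paper: both arguments sandwich $\gamma_i(\hat G)$ between $\overline{\gamma_i(G)}$ (from below, via monotonicity of the lower central series and closedness) and the kernel of the induced continuous surjection $\hat G\to G/\gamma_i(G)$ (from above, via nilpotency of class $<i$ of the quotient). The only difference is cosmetic: where the paper cites Proposition 3.2.2(d) of Ribes--Zalesskii for $|\hat G:\overline{\gamma_i(G)}|=|G:\gamma_i(G)|$, you prove that fact directly with the density argument $\overline{G\cap K}=K$ for $K$ open.
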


\begin{proof}
The group $\hat G$ is the inverse limit of the finite quotients of $G$.
Since $|G:\gamma_i(G)|$ is finite, it follows that $G/\gamma_i(G)$ is isomorphic to a quotient
of $\hat G$, and consequently $|\hat G:\gamma_i(\hat G)|\ge |G:\gamma_i(G)|$.
On the other hand, if we consider $G$ as a subgroup of $\hat G$ then the topological closure
$\overline{\gamma_i(G)}$ of $\gamma_i(G)$ in $\hat G$ satisfies that
$|\hat G:\overline{\gamma_i(G)}|=|G:\gamma_i(G)|$, by Proposition 3.2.2(d) of \cite{Ribes-Zalesskii}.
Since $\overline{\gamma_i(G)}\le \gamma_i(\hat G)$, the result follows.
\end{proof}

\bibliographystyle{unsrt}

\end{document}